\documentclass[11pt,final]{amsart}
\usepackage{amsmath,latexsym,amssymb,amsmath,
amscd,amsthm,amsxtra,color}
\usepackage{amsfonts}
\usepackage{amssymb,latexsym}
\usepackage{enumerate}
\usepackage{mathrsfs}
\usepackage{amsmath}
\usepackage{amsthm}
\usepackage{verbatim}

\newtheorem{thm}{Theorem}[section]
\newtheorem{prop}[thm]{Proposition}
\newtheorem{lem}[thm]{Lemma}
\newtheorem{cor}[thm]{Corollary}
\newtheorem{obs}{Observation}[section]
\newtheorem{clm}{Claim}[section]

\newtheorem{rem}{\textbf{Remark}}[section]
\textwidth 14cm \textheight 22cm \hoffset=-1cm \voffset=-2cm

\def\nz{\mathbb{N}}

\def\rz{\mathbb{R}}

\def\O{\Omega}
\def\p{\partial}
\def\e{\epsilon}
\def\a{\alpha}
\def\b{\beta}
\def\g{\gamma}
\def\G{\Gamma}
\def\d{\delta}
\def\l{\lambda}
\def\L{\Lambda}
\def\s{\sigma}
\def\vp{\varphi}

\def\ra{\rightarrow}
\def\ru{\rightharpoonup}
\def\ov{\overline}

\def\RA{\Rightarrow}
\def\bs{\backslash}
\def\tl{\tilde}

\def\n{\nabla}
\def\H{{\mathcal H}}
\def\non{\nonumber}
\def\div{{\rm div}}

\newcommand{\D}{\displaystyle}
\newcommand{\T}{\text}


\begin{document}

\title{Classification Theorem For Positive Critical Points Of Sobolev Trace Inequality}
\author{Yang Zhou}
\date{}
\maketitle
\noindent{\bf Abstract:}
 We consider the Euler-Lagrange equation of Sobolev trace inequality and prove several classification results. Exploiting the moving sphere method, it has been shown, when $p=2$, positive solutions of Euler-Lagrange equation of Sobolev trace inequality are classified.
 Since the moving sphere method strongly relies on the symmetries of the equation, in this paper we use asymptotic estimates and two important integral identities to classify positive solutions of Euler-Langrange equation of Sobolev trace inequality under finite energy when $1<p<n$.\\
\medskip
{\bf Keywords:} Sobolev trace inequality, Classification Theorem, Quasilinear elliptic equations, Half space.

\section{Introduction}
Given $n\ge 2$ and $1<p<n$, firstly we consider the following equation in half space
\begin{equation}\label{e1}
    \left\{
    \begin{array}{lr}
	\Delta_p u(y,t)=0 & (y,t)\in \mathbb{R}^n_+=\mathbb{R}^{n-1}\times\{t>0\}\\
	|D u|^{p-2}\frac{\partial u}{\partial t}(y,t)=-u^q & t=0\\
    u>0 & \T{in}\,\,\, \mathbb{R}^n_+\\
     u\in D^{1,p}(\mathbb{R}^n_+)
	\end{array}
 \right.     
\end{equation}
where $$D^{1,p}(\mathbb{R}^n_+)=\{u\in L^{p^*}(\mathbb{R}^n_+):\nabla u\in L^p(\mathbb{R}^n_+)\},\,q=\frac{n(p-1)}{n-p},\,p^*=\frac{n p}{n-p}.
$$
We can see $(\ref{e1})$ is the Euler-Lagrange equation of the following Sobolev trace inequality: 
\begin{equation}\label{e2}
\|u\|_{L^{p_*}(\p\mathbb{R}^n_+)}\le C_{n,p}\|\n u\|_{L^p(\mathbb{R}^n_+)}
\end{equation}
where $p_*=\frac{(n-1)p}{n-p}$, $u\in C_c^{\infty}(\ov{\mathbb{R}^n_+})$.

The research for optimal constants in $(\ref{e2})$ and the classification of positive solutions to
$(\ref{e1})$
started in the seminal paper \cite{E88,B93} and it has been the object of
several studies. J. F. Escobar \cite{E88} conjectured that the minimizers for the Sobolev quotient,
$$
Q_p(\mathbb{R}^n_+):=\inf_\vp\left\{\frac{\|\n\vp\|_{L^p(\mathbb{R}^n_+)}^p}{\|\vp\|_{L^{p_*}(\p\mathbb{R}^n_+)}^p}:\vp\in C_0^\infty(\ov{\mathbb{R}^n_+})\right\},
$$
are the functions
\begin{equation}
   \label{e2.5}u(y,t)=\bigg(\frac{\l^{\frac{2}{p}}}{|y-y_0|^2+(t+\l)^2}\bigg)^{\frac{n-p}{2(p-1)}} 
\end{equation}
with $\l>0$ and $y_0\in\rz^{n-1}$, and he solved the conjecture for $p=2$ (see also \cite{B93}). Also, he raised a question in \cite{E88}: Does every positive solution to the Euler-Lagrange $(\ref{e1})$ have the form like $(\ref{e2.5})$ when $p=2$?

 By means of mass transportation techniques, B. Nazaret \cite{N06} obtained the optimal constants in Sobolev trace inequality for every $p\in(1, n)$, along with the optimality of $(\ref{e2.5})$. However, the characterization of equality cases is
still missing in \cite{N06}, but this only depends on some minor technical points that were filled in \cite{MN17}.

For $p=2$, equation $(\ref{e1})$ becomes the following equation which is related to the critical fractional Laplacian equation (see \cite{CS07})
\begin{equation*}
    \left\{
    \begin{array}{lr}
	\Delta u(y,t)=0 & (y,t)\in \mathbb{R}^n_+\\
	\frac{\partial u}{\partial t}(y,t)=-u^\frac{n}{n-2} & t=0.
	\end{array}
 \right.     
\end{equation*}
It was shown in \cite{HY94,O96,YYL03} using the moving sphere method that positive solutions to the above equation must be of the form
$$
u(y,t)=\bigg(\frac{\l(n-2)}{|y-y_0|^2+(t+\l)^2}\bigg)^{\frac{n-2}{2}}
$$
with $\l>0$ and $y_0\in\rz^{n-1}$.

Note that for $p\neq 2$, the problem is quasilinear and the Kelvin transform is not
available, which makes it more complicated then the semilinear case. Recall that for the whole space,
under the additional assumption of finite energy, B. Sciunzi \cite{S16} and J. V\'{e}tois \cite{Veto16}
exploited the moving plane method to show that any
positive weak solution to $\Delta_p u+u^{p^*-1}\,\,\T{in}\,\,\rz^n$ must be of the form 
$$
u(x)=\bigg(\frac{|x-x_0|^{\frac{p}{p-1}}+\l^{\frac{p}{p-1}}}{\l^{\frac{1}{p-1}}n^{\frac{1}{p}}(\frac{n-p}{p-1})^{\frac{p-1}{p}}}\bigg)^{-\frac{n-p}{p}}
$$
with $\l>0$ and $x_0\in\rz^{n}$. Later, based on the asymptotic estimates in \cite{Veto16}, the paper \cite{CFR20} used integration method to classify positive solutions to the critical p-Laplacian equation in an anisotropic setting, which could also imply this result.
In this paper, we adapt the proof of \cite{CFR20} and classify the positive
solutions of $(\ref{e1})$, which answers Escobar's question for general $p\in(1,n)$. 

\begin{thm}\label{t1}
Let $n\ge 2,\,1<p<n,$ and let $u$ be a solution to $(\ref{e1})$. Then 
\begin{equation}
u(y,t)=\bigg(\frac{n-p}{p-1}\bigg)^{\frac{n-p}{p}}\bigg(\frac{\l^{\frac{2}{p}}}{|y-y_0|^2+(t+\l)^2}\bigg)^{\frac{n-p}{2(p-1)}}
 \label{e1.1}
\end{equation}
for some $\l>0$ and $y_0\in \rz^{n-1}$.
\end{thm}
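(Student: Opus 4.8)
The plan is to follow the strategy of \cite{CFR20}, replacing the interior regularity/asymptotics there with the corresponding boundary version. The first step is to establish the precise asymptotic behavior of a solution $u$ to $(\ref{e1})$ at infinity. By the finite-energy assumption $u\in D^{1,p}(\rz^n_+)$ together with standard regularity for the $p$-Laplacian up to the boundary (Moser iteration on balls centered at boundary points, plus $C^{1,\alpha}$ estimates away from the zero set of $Du$), one shows $u\in L^\infty$ and $u(y,t)\to 0$ as $|(y,t)|\to\infty$. Then, viewing $|x|^{2-n}$-type barriers — more precisely the $p$-harmonic fundamental solution $|x|^{-(n-p)/(p-1)}$ — as super/subsolutions, one upgrades this to the two-sided bound $u(x)\asymp |x|^{-(n-p)/(p-1)}$ and, crucially, to the sharp first-order expansion
\begin{equation}\label{asymp}
u(x)=\frac{\mu}{|x|^{\frac{n-p}{p-1}}}\Big(1+o(1)\Big),\qquad |Du(x)|=\frac{\mu'}{|x|^{\frac{n-p}{p-1}+1}}\Big(1+o(1)\Big)
\end{equation}
for some constants $\mu,\mu'>0$, with matching decay after one derivative. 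This is the exact analogue of V\'etois' asymptotic estimates, now carried out in the half-space with the nonlinear Neumann condition on $\{t=0\}$ dictating the constants; I would obtain it by an inversion $x\mapsto x/|x|^2$ (which is conformally natural for the trace problem even though it is not a symmetry of the $p$-Laplacian) turning the behavior at infinity into a boundary-regularity statement near the origin for the transformed equation.

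The second step is to derive the two integral identities that are the heart of \cite{CFR20}. The first is a Pohozaev-type identity obtained by multiplying $\Delta_p u=0$ by $x\cdot Du$ and by $u$ and integrating over $B_R^+=B_R\cap\rz^n_+$; the bulk terms organize into a divergence, the contribution on the flat part $\{t=0\}$ is converted using the boundary condition $|Du|^{p-2}\partial_t u=-u^q$, and the spherical part on $\partial B_R\cap\rz^n_+$ is shown to vanish as $R\to\infty$ by the decay in $(\ref{asymp})$. Because $q=\frac{n(p-1)}{n-p}$ and $p_*=\frac{(n-1)p}{n-p}$ are exactly the critical exponents, the resulting identity forces a rigid relation between $\int_{\rz^n_+}|Du|^p$ and $\int_{\partial\rz^n_+}u^{p_*}$ — i.e. $u$ is an extremal, not merely a critical point, of $(\ref{e2})$. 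The second identity is the differential/geometric one: one tests the equation against a carefully chosen vector field (the "$P$-function" computation, controlling $\operatorname{div}\big(|Du|^{p-2}$ of a suitable combination of $u$, $Du$, $|Du|^2\big)$) to produce a nonnegative integrand whose integral, by the boundary condition and the first identity, must vanish. The vanishing of that integrand is an equality case in a pointwise Cauchy–Schwarz/convexity inequality, which pins down the Hessian of $u^{-(p-1)/(n-p)}$ (the natural unknown, since in the model solution $(\ref{e1.1})$ this power of $u$ is a quadratic polynomial in $(y,t+\lambda)$).

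The third step turns this rigidity into the explicit form $(\ref{e1.1})$. Setting $v=u^{-\frac{p-1}{n-p}}$, the equality case from step two gives that $D^2 v$ is a multiple of the identity, so $v(y,t)=a\,|(y,t)-z_0|^2+b$ for constants $a>0$, $b\in\rz$, $z_0\in\rz^n$. Plugging this back into the $p$-Laplace equation in the interior determines nothing new (it is automatically $p$-harmonic away from its critical point once $v$ is of this form, by direct computation), while the Neumann boundary condition on $\{t=0\}$ forces $z_0=(y_0,-\lambda)$ with $\lambda>0$ — i.e. the "center" must sit at height $-\lambda$ below the boundary — and fixes $a$ in terms of $\lambda$; finally $b$ is determined (up to the scaling already present) so that the normalization constant $\big(\frac{n-p}{p-1}\big)^{\frac{n-p}{p}}$ in $(\ref{e1.1})$ appears, completing the proof.

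The main obstacle I expect is Step 1: obtaining the \emph{sharp} asymptotics $(\ref{asymp})$ — not just $u\asymp|x|^{-(n-p)/(p-1)}$ but the existence of the limit $\mu$ and the companion estimate on $|Du|$ — uniformly up to the flat boundary, where the nonlinear Neumann condition and the possible degeneracy of $\Delta_p$ at critical points of $u$ interact. In the interior this is exactly the delicate part of \cite{Veto16}; here one must additionally control behavior at boundary points, and the inversion trick must be checked to be compatible with the $C^{1,\alpha}$ boundary regularity theory for quasilinear oblique-type problems. Once $(\ref{asymp})$ is in hand, the Pohozaev computation and the $P$-function identity are, as in \cite{CFR20}, essentially algebraic, and the extraction of $(\ref{e1.1})$ is routine.
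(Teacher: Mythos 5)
Your overall architecture (decay estimates, then two integral identities, then rigidity) matches the paper's, but three of your concrete steps would fail as written. First, your route to the sharp asymptotics via the inversion $x\mapsto x/|x|^2$ is exactly the Kelvin-transform device that is unavailable for $p\neq 2$: the $p$-Laplacian is not conformally covariant, and the transformed equation is not of a form to which boundary regularity near the origin applies. The paper instead works with the rescalings $u_R(x)=R^{\frac{n-p}{p-1}}u(Rx)$, weak Lebesgue space bounds, Moser iteration up to the boundary, a boundary Harnack inequality proved for the problem with small data (Corollary \ref{cor1}), and Serrin's comparison principle to extract the limit $\alpha=\lim|x|^{\frac{n-p}{p-1}}u(x)$; this occupies all of Section 2 and is the part your sketch replaces with a tool that does not exist here. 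Second, your Step 3 rigidity is wrong for $p\neq 2$: the equality case of the pointwise inequality does \emph{not} give that the Hessian of a power of $u$ is a multiple of the identity. What it gives (Proposition \ref{p3}) is that $\p_j(|\n v|^{p-2}\p_i v)=\l\,\d_{ij}$ for $v=\frac{n-p}{p}u^{-\frac{p}{n-p}}$, i.e.\ the \emph{nonlinear} stress field $|\n v|^{p-2}\n v$ is the linear map $\l(x-x_0)$; integrating this yields $v=c\,|x-x_0|^{\frac{p}{p-1}}$, which is a quadratic polynomial only when $p=2$. (Your candidate $v=u^{-\frac{p-1}{n-p}}$ is also off: for the model solution that power is $|x-x_0|$, whose Hessian is never a multiple of the identity; the quadratic profile corresponds to $u^{-\frac{2(p-1)}{n-p}}$, and in any case the correct conclusion concerns $\n(|\n v|^{p-2}\n v)$, not $D^2v$.)

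Third, you treat the two identities as "essentially algebraic", but for $p\neq 2$ the solution is only $C^{1,\a}$ and $|\n u|^{p-2}\n u$ has no classical derivative where $\n u$ vanishes, so neither the Serrin--Zou nor the Pohozaev differential identity can be written down directly. The paper must first prove $|\n u|^{p-2}\n u\in W^{1,2}_{loc}(\ov\O)$ with quantitative decay (Proposition \ref{p2}), by solving regularized Neumann problems $\div(a^k(\n u_k))=0$, $a^k(\n u_k)\cdot\nu=u^q$, deriving the identities for $u_k$, and passing to the limit; this approximation scheme, together with the error terms $E^k,I^k,J^k$ it generates, is a substantial and non-optional part of the argument. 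Finally, a smaller point: the Pohozaev identity does not show that $u$ is an extremal of \eqref{e2} (at the critical exponent it carries no information beyond testing with $u$ itself); its actual role in the paper is to cancel the boundary integral over $\p\O$ produced by the Serrin--Zou divergence identity, so that the nonnegative integrand is controlled only by terms that decay as $R^{\frac{p-n}{p-1}}$.
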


\begin{rem}\label{r1.1}
   We say $u$ is a solution to $(\ref{e1})$, if 
   \begin{equation}
   \int_\O |\n u|^{p-2}\n u\cdot\n \vp d x=\int_{\p\O}u^q \vp d\s,\,\,\,\forall \vp\in C_c^\infty(\rz^n)
    \label{er1}  
   \end{equation}
\end{rem}

The proof of Theorem \ref{t1} is based on asymptotic estimates of $u$ and several integral identities. On the one hand, note that for the asymptotic upper bound of solutions to $(\ref{e1})$, there is much research on it (see e.g. \cite{W10,MW19}). Following the proof in \cite{W10,MW19} and arguing as \cite{CFR20,L19}, we can show that $u$ is bounded in $\mathbb{R}^n_+$ and $u$ behaves as
the fundamental solution from above. However, we notice that, differently from \cite{CFR20,Veto16}, we have to prove a Harnack type inequality for equation $(\ref{e1})$ in order to get asymptotic lower bounds on $u$. Adapting the proof in \cite{HL00}, we obtained a Harnack type inequality under small energy (see Corollary \ref{cor1}). Then arguing as \cite{Veto16}, we obtain asymptotic lower bounds on $u$. On the other hand, unlike the whole space case, we need not only Serrin-Zou identity (see e.g. \cite{SZ02}), but also Pohozaev identity to handle the boundary term (see e.g. \cite{QX15,GW20,GL23}). Exploiting an
approximation argument (see e.g. \cite{CFR20,CM18}) and using these two integral
identities, we finally show that $u$ must be of the form $(\ref{e1.1})$ in Section 3.

Secondly, we consider the following equations
\begin{equation}\label{e02}
    \left\{
    \begin{array}{lr}
	\Delta_p u+d u^{p^*-1}=0 & \T{in} \quad \mathbb{R}^n_+\\
	|D u|^{p-2}\frac{\partial u}{\partial t}=\mp u^q &\T{on} \quad \p\mathbb{R}^n_+ \\
    u>0 & \T{in} \quad \mathbb{R}^n_+,
	\end{array}
 \right.     
\end{equation}
where $p\in(1,n),\,d>0$. We can see $(\ref{e02})$ are the Euler-Lagrange equations of the following  interpolation inequality between Sobolev inequality and Sobolev trace inequality (see e.g. \cite{BL85,MNT23})
\begin{equation}
\label{e03}\frac{\|\n\vp\|_{L^p(\mathbb{R}^n_+)}^p}{A}\mp\frac{\|\vp\|_{L^{p_*}(\p\mathbb{R}^n_+)}^p}{B}
\ge \|\vp\|_{L^{p^*}(\mathbb{R}^n_+)}^p
\end{equation}
for some appropriate $A>0,B>0$ and every $\vp\in C_c^\infty(\ov{\mathbb{R}^n_+})$.

Adapting the proof of theorem \ref{t1}, we get the following theorem in Section 4, which implies that every positive critical point of $(\ref{e03})$ must be of the form $(\ref{e04})$.
\begin{thm}\label{t02}
Let $n\ge 2,\,1<p<n,\,d>0$ and let $u\in D^{1,p}(\mathbb{R}^n_+)$ be a solution to equation $(\ref{e02})$. Then 
\begin{equation}
    \label{e04}u(x)=\bigg(\frac{n-p}{p}\bigg)^{\frac{n-p}{p}}\bigg(\frac{\frac{n(p-1)}{p}\l^{\frac{p}{p-1}}|x-x_0|^{\frac{p}{p-1}}+d\frac{n-p}{p}}{n\l}\bigg)^{-\frac{n-p}{p}}
\end{equation}
for some $x_0\in \rz^{n}_{\mp}$ and $\l=\mp\frac{1}{x_0^n}>0$.
\end{thm}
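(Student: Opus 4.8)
\noindent\textbf{Plan of proof of Theorem~\ref{t02}.}
The plan is to follow, \emph{mutatis mutandis}, the scheme of Theorem~\ref{t1}: (i) boundedness and $C^{1,\alpha}$ regularity of $u$ up to $\p\mathbb{R}^n_+$; (ii) sharp two-sided asymptotics for $u$ and $\nabla u$ at infinity; (iii) two integral identities — Serrin--Zou and Pohozaev — combined through an approximation/cutoff argument; and (iv) an ODE rigidity pinning down the explicit profile. The only genuinely new feature, compared with the whole-space critical $p$-Laplace equation of \cite{S16,Veto16,CFR20}, is that \eqref{e02} carries \emph{two} critical nonlinearities at once — the interior term $d\,u^{p^*-1}$ and the nonlinear Neumann term $\mp u^q$ on the non-compact boundary $\p\mathbb{R}^n_+$. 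Since both are critical, the hypothesis $u\in D^{1,p}(\mathbb{R}^n_+)$ is used throughout: it makes the iterations converge, it drives the decay estimates, and it forces every ``boundary at infinity'' term in the identities to vanish.

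First I would show $u\in L^\infty(\mathbb{R}^n_+)\cap C^{1,\alpha}_{\mathrm{loc}}(\overline{\mathbb{R}^n_+})$ with $u(x)\to 0$ as $|x|\to\infty$, by a Moser--De Giorgi (Brezis--Kato type) iteration adapted to the mixed interior/boundary nonlinearity, as in \cite{W10,MW19,L19}. Next, the asymptotic \emph{upper} bounds $u(x)\le C(1+|x|)^{-\frac{n-p}{p-1}}$ and $|\nabla u(x)|\le C(1+|x|)^{-\frac{n-1}{p-1}}$: the point is that $-\Delta_p u=d\,u^{p^*-1}$ decays like $|x|^{-n-\frac{p}{p-1}}$, which is integrable at infinity, and the boundary datum $u^q$ decays fast as well, so a blow-down/inversion analysis in the spirit of \cite{W10,MW19} (the degenerate operator has no genuine Kelvin transform) shows that $u$ is comparable from above to the fundamental solution of $\Delta_p$ on $\mathbb{R}^n_+$. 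For the matching \emph{lower} bounds I would prove, for \eqref{e02}, the analogue of the small-energy Harnack inequality of Corollary~\ref{cor1} (adapting \cite{HL00}); crucially this Harnack must be uniform \emph{up to} $\{t=0\}$, obtained via reflection across the hyperplane compatible with the nonlinear Neumann condition. Since the upper bound makes the rescaled energy on far-away half-balls small, this Harnack applies there and, arguing as in \cite{Veto16}, upgrades the estimate to $u(x)\ge c(1+|x|)^{-\frac{n-p}{p-1}}$ with the corresponding gradient asymptotics, yielding a precise leading profile $u(x)=\gamma|x|^{-\frac{n-p}{p-1}}(1+o(1))$ for some $\gamma>0$ (and similarly for $\nabla u$) as $|x|\to\infty$.

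With these estimates in hand I would run the integral-identity argument. Testing \eqref{e02} with $\eta_R u$ and letting $R\to\infty$ (justified by the decay, cf.\ \cite{CFR20,CM18}) gives the energy relation among $\int_{\mathbb{R}^n_+}|\nabla u|^p$, $d\int_{\mathbb{R}^n_+}u^{p^*}$ and $\int_{\p\mathbb{R}^n_+}u^{p_*}$ (using $q+1=p_*$). Testing with $\eta_R\,(x\cdot\nabla u)$ over $B_R^+=B_R\cap\mathbb{R}^n_+$ and letting $R\to\infty$ yields the Pohozaev identity: the sharp gradient decay kills the $\p B_R$ contributions, while the face $\{t=0\}$ now carries a genuine nonlinear boundary term that must be retained and computed (this is the ``Pohozaev identity to handle the boundary term'', cf.\ \cite{QX15,GW20,GL23}), producing a second relation among the same three quantities. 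Together these force equality in the relevant inequality between them, and then I would integrate the Serrin--Zou differential identity \cite{SZ02} — a Rellich-type identity for $p$-superharmonic functions whose remainder is a nonnegative quadratic form in the Hessian of $u$, vanishing exactly on radial profiles — again via the cutoff argument and a careful treatment of the critical set $\{\nabla u=0\}$, to conclude that the remainder vanishes identically. Hence $u$ is radial and decreasing about some point $\bar x$, solving the reduced ODE, whose bounded positive finite-energy solutions are exactly the Talenti-type functions $C\big(\mu|x-\bar x|^{\frac{p}{p-1}}+\nu\big)^{-\frac{n-p}{p}}$; matching constants with \eqref{e02} and imposing the Neumann condition on $\{t=0\}$ then forces the reflected center $x_0\in\mathbb{R}^n_\mp$, the scale $\lambda=\mp\,1/x_0^n>0$ and the explicit coefficients, i.e.\ \eqref{e04}.

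The hard part, exactly as for Theorem~\ref{t1}, is the \emph{boundary} analysis, in two places. First, the asymptotic lower bound near $\{t=0\}$ at infinity: the interior Harnack inequality does not suffice, one needs a version uniform up to the boundary, which requires the \cite{HL00}-type reflection argument made compatible with the nonlinear boundary condition. Second, in the Pohozaev identity one must control \emph{two} unbounded boundary-type contributions simultaneously — the ``boundary at infinity'' $\p B_R$, which must vanish and hence demands the \emph{sharp} gradient decay (not merely an $O$-bound), and the physical boundary $\p\mathbb{R}^n_+$ with its own critical term, which survives and must be identified exactly — all while handling the degeneracy of $\Delta_p$ on $\{\nabla u=0\}$ in the Serrin--Zou remainder. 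Relative to the whole-space case of \cite{CFR20}, it is precisely this bookkeeping of the extra non-compact boundary carrying a critical nonlinearity that is the new work; once the sharp asymptotics and the two identities are in place, the rigidity and ODE steps go essentially as in \cite{CFR20}.
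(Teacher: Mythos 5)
Your plan reproduces the paper's architecture faithfully: Moser iteration for boundedness (Lemma \ref{l5}, Lemma \ref{l6}), weak-Lebesgue decay and a small-energy Harnack inequality valid up to $\{t=0\}$ (Lemma \ref{l7}, Lemma \ref{l8}, Corollary \ref{cor2}) giving the sharp two-sided asymptotics of Proposition \ref{p4}, the $W^{1,2}_{loc}$ estimate on the stress field via regularized problems (Proposition \ref{p5}), and a Serrin--Zou/Pohozaev combination leading to rigidity and the explicit profile. The one step whose mechanism, as you describe it, would not deliver the conclusion is the way the two identities are combined.

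You propose to extract two \emph{scalar} relations among $\int_{\O}|\n u|^p$, $d\int_{\O}u^{p^*}$ and $\int_{\p\O}u^{p_*}$ --- one from testing with $\eta_R u$, one from the global Pohozaev identity --- and assert that ``together these force equality in the relevant inequality between them.'' Since both nonlinearities in \eqref{e02} are exactly critical, the global Pohozaev identity is scale-invariant and is automatically consistent with the energy identity for \emph{every} finite-energy solution; the pair of scalar relations carries no rigidity information and forces nothing. What the paper actually does (Proposition \ref{p6}, following \eqref{e97}--\eqref{e100} and \eqref{e171}--\eqref{e173}) is to form a precise \emph{linear combination of the two differential identities}, with the Pohozaev part weighted by $\pm\frac{n-1}{n-p}$, multiply by a cutoff supported in $B_{2R}$ and integrate: the boundary integrals over $\{t=0\}$ produced by the Serrin--Zou divergence (which are genuinely nonzero and not obviously signed) cancel \emph{exactly} against those produced by the Pohozaev divergence, and the surviving bulk terms are $O(R^{\frac{p-n}{p-1}})$ by Propositions \ref{p4} and \ref{p5}. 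Letting $R\to\infty$ kills the nonnegative quadratic remainder and yields the pointwise matrix identity $\p_j(|\n u|^{p-2}u_i)=q L_{ij}-\frac{d}{n}u^{p^*-1}\d_{ij}$, i.e.\ $\n(|\n v|^{p-2}\n v)=\l\,\mathrm{Id}$ for $v=\frac{n-p}{p}u^{-\frac{p}{n-p}}$, after which the Neumann condition $|\n v|^{p-2}\p_t v=\pm1$ pins down $x_0\in\rz^n_{\mp}$ and $\l=\mp 1/x_0^n$. So the Pohozaev identity enters not as a second scalar constraint but as the exact counterterm for the Serrin--Zou boundary flux on $\p\rz^n_+$; without identifying that cancellation, your integration of the Serrin--Zou identity leaves an uncontrolled boundary integral on the physical boundary and the argument does not close.
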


Under an additional hypothesis $u(x) = O(|x|^{2-n})$ for large $|x|$, the result for $p=2$ was 
established earlier by Escobar (\cite{E90}) along the lines of the 
proof of Obata. Later, using the method of moving spheres, 
a variant of the method of moving planes, \cite{LZ95} obtained classification results for equation $(\ref{e02})$ when $p=2$.

\noindent\textbf{Structure of the paper.} The frame of the paper can be explained as follows. First, in Section 2 we show that $u$ is bounded and satisfies some decay estimates at infinity. Hence, by elliptic regularity theory for p-Laplacian type equations \cite{L88,L91}, we can get an asymptotic upper bound on $|\n u|$ (in particular arguing as \cite{Veto16}, we can see
it behaves as the fundamental solution both from above and below). Also, using the method like \cite{CFR20}, we prove in Subsection 2.3 that $|\n u|^{p-2}\n u\in W_{loc}^{1,2}$ and we get an asymptotic integral estimate on second order derivatives.\\
Then, in Section 3 we consider the function $v=u^{-\frac{p}{n-p}}$ and use some integral identities to prove that $\n(|\n v|^{p-2}\n v)$ is a multiple of the identity matrix, from which the symmetry result follows. Finally, following the proof of Theorem \ref{t1}, we shall get Theorem \ref{t02} in Section 4.

{\bf Acknowledgment:}
I am grateful to Prof. Xi-Nan Ma for his advanced guidance. I would like to thank Dao-Wen Lin and Wang-Zhe Wu for some helpful discussions and useful suggestions. Also, I'd like to thank Prof. Jingbo Dou for his help in revising this paper. This work was supported by National Natural Science Foundation of China [grant number 12141105].

\section{PRELIMINARIES}

In the whole paper, we denote by $B_r(x)$ the usual Euclidean ball, by $B_r$ the ball $B_r(0)$ centered at the origin, and by $\Omega$ the upper half space $\mathbb{R}^n_+:=\mathbb{R}^{n-1}\times\{t>0\}$.\\
Also, we denote  $p^*=\frac{n p}{n-p}$, $p_*=\frac{(n-1) p}{n-p}$, $b:=\frac{n-1}{n-p}$, $a:=\frac{n}{n-p}$, then $q=p_{*}-1$, $p_*=p b$ and $p^*=p a$.\\
Besides, we always assume that $\eta\in C_c^\infty(\rz^n)$ is a cut-off function such that $0\le \eta\le 1$.

\begin{obs}\label{o1}
    If $u\in D^{1,p}(\O)$, then $u\in L^{p_*}(\p \O)$ and $(\ref{e2})$ also holds for $u$.
\end{obs}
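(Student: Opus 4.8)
The statement asserts that the trace inequality \eqref{e2}, originally established for $u \in C_c^\infty(\ov{\mathbb{R}^n_+})$, extends by density to all $u \in D^{1,p}(\O)$, with the trace of $u$ on $\p\O$ belonging to $L^{p_*}(\p\O)$. My plan is the standard density-and-Fatou argument. First I would recall that $C_c^\infty(\ov{\mathbb{R}^n_+})$ is dense in $D^{1,p}(\O)$ with respect to the norm $\|\n \cdot\|_{L^p(\O)}$ (equivalently, in the Beppo Levi completion that defines $D^{1,p}$); this is where one uses $1<p<n$, so that the homogeneous Sobolev embedding $D^{1,p}(\O)\hookrightarrow L^{p^*}(\O)$ holds and the space is well-defined. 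Given $u \in D^{1,p}(\O)$, pick a sequence $\vp_k \in C_c^\infty(\ov{\mathbb{R}^n_+})$ with $\n \vp_k \to \n u$ in $L^p(\O)$; by \eqref{e2} applied to differences, $\|\vp_k - \vp_j\|_{L^{p_*}(\p\O)} \le C_{n,p}\|\n\vp_k - \n\vp_j\|_{L^p(\O)}$, so $(\vp_k|_{\p\O})$ is Cauchy in $L^{p_*}(\p\O)$ and converges to some $g \in L^{p_*}(\p\O)$.

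The second step is to identify $g$ with a genuine trace of $u$. I would pass to a subsequence so that $\vp_k \to u$ a.e.\ in $\O$ (using $L^{p^*}$-convergence, which also follows from \eqref{e2}'s companion Sobolev inequality applied to $\vp_k-\vp_j$) and $\vp_k|_{\p\O} \to g$ a.e.\ on $\p\O$. A clean way to pin down $g$ is via a one-dimensional fundamental-theorem-of-calculus estimate: for a.e.\ $y \in \rz^{n-1}$,
\begin{equation*}
|\vp_k(y,0)|^{p_*} \le C\int_0^\infty \left( |\vp_k(y,t)|^{p_*} + |\vp_k(y,t)|^{p_*-1}|\p_t \vp_k(y,t)| \right)\, dt,
\end{equation*}
which, after integrating in $y$ and using Hölder, bounds $\|\vp_k|_{\p\O}\|_{L^{p_*}(\p\O)}$ by the $\O$-side quantities and simultaneously shows the boundary values depend continuously on $\vp_k$ in a way compatible with the a.e.\ limit; thus $g$ is the trace $Tu$ in the sense of the standard trace operator $T: D^{1,p}(\O) \to L^{p_*}(\p\O)$, and it is independent of the approximating sequence. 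Finally, applying \eqref{e2} to each $\vp_k$ and letting $k\to\infty$ — the left side converging to $\|Tu\|_{L^{p_*}(\p\O)}$ and the right side to $\|\n u\|_{L^p(\O)}$ — yields \eqref{e2} for $u$.

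The only genuinely delicate point is the density of $C_c^\infty(\ov{\mathbb{R}^n_+})$ in $D^{1,p}(\O)$: one must truncate at infinity and mollify near $\p\O$ without losing control of the gradient, which is routine but uses $p<n$ in an essential way (so that the Sobolev embedding controls the truncation errors in $L^{p^*}$ and hence the cutoff terms in $L^p$). Everything else is Fatou's lemma and the continuity of the trace map. I expect no serious obstacle beyond carefully invoking this density result and the existence of the trace operator, both of which are classical; the author likely cites them or gives a one-line reduction.
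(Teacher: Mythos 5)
Your proposal is correct in outline, but it takes a genuinely different and heavier route than the paper. The paper's proof is a direct cutoff argument: take $\eta=\eta_R$ with $\eta=1$ on $B_R$, $\eta=0$ outside $B_{2R}$, $|\n\eta|\le C_n/R$, apply the trace inequality to $u\eta$ (a compactly supported $W^{1,p}$ function, for which the inequality follows from local mollification), and estimate the commutator term by H\"older:
$$\int_\O u^p|\n\eta|^p\,dx\le \|u\|_{L^{p^*}(\O\cap(B_{2R}\setminus B_R))}^p\,\big\||\n\eta|\big\|_{L^n}^p\le C_{n,p}\|u\|_{L^{p^*}(\O\cap(B_{2R}\setminus B_R))}^p\ra 0$$
as $R\ra\infty$, since $u\in L^{p^*}(\O)$ by the definition of $D^{1,p}$. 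Letting $R\ra\infty$ and using Fatou on the left-hand side finishes the proof in three lines. Your route instead invokes global density of $C_c^\infty(\ov{\O})$ in $D^{1,p}(\O)$, a Cauchy-sequence argument on the boundary, and an identification of the limit with a trace operator. This is valid, but the density statement you flag as ``the only genuinely delicate point'' is itself usually proved by exactly the cutoff-plus-H\"older estimate the paper uses directly; so your argument subsumes the paper's as a lemma and then adds machinery (trace operator, a.e.\ identification of the boundary limit) that the paper does not need, because the paper never has to identify $g$ with anything --- it works with $u\eta$ itself, whose boundary values are unambiguous, and passes to the limit monotonically. The trade-off: your approach produces a bona fide continuous trace operator on $D^{1,p}(\O)$ as a byproduct, which is more than the observation asks for; the paper's approach is shorter, self-contained, and is reused verbatim in Remark \ref{r2.1} and Remark \ref{r2.1.5} to extend other inequalities and the weak formulation to $D^{1,p}$ test functions.
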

\begin{proof}
    Let $\eta$ be a non-negative cut-off function such that $\eta=1$ in $B_R$, $\eta=0$ outside $B_{2R}$ and $|\n \eta|\le \frac{C_n}{R}$. Then applying Sobolev trace inequality with $u\eta$, we get
    \begin{align*}
        \|u\eta\|_{L^{p_*}(\p\O)}^p\le& C_{n,p}
   \bigg(\int_{\O} |\n u|^p\eta^p dx+ \int_{\O} u^p|\n\eta|^p dx\bigg)\\
    \le&C_{n,p}
   \bigg(\int_{\O} |\n u|^p\eta^p dx+C_{n,p}\|u\|_{L^{p^*}(\O\cap (B_{2 R}\bs B_R))}^p\bigg)
    \end{align*}
    Let $R\ra \infty$, we have $u\in L^{p_*}(\p \O)$ and Sobolev trace inequality $(\ref{e2})$ holds for any $u\in D^{1,p}(\O)$.
\end{proof}
\begin{rem}\label{r2.1}
    Like the proof in Observation \ref{o1}, we can see that $(\ref{er1})$ holds for any $\vp\in D^{1,p}(\O)$.
\end{rem}

\begin{obs}\label{o2}
    If $u\in W^{1,p}(\O\cap B_R(0))$ and $u|_{\p B_R(0)\cap\O}=0$, then \begin{equation}\label{e3}
       \|u\|_{L^{p^*}(\O\cap B_R(0))}\le C_{n,p}\|\n u\|_{L^p(\O\cap B_R(0))} .
    \end{equation}
\end{obs}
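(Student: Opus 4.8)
The plan is to reduce $(\ref{e3})$ to the Sobolev inequality on the whole half space $\O$ by extending $u$ by zero. Define $\tl u:=u$ on $\O\cap B_R(0)$ and $\tl u:=0$ on $\O\setminus B_R(0)$. Since $u\in W^{1,p}(\O\cap B_R(0))$ vanishes, in the trace sense, on the spherical cap $\p B_R(0)\cap\O$, the zero extension $\tl u$ belongs to $W^{1,p}(\O)$, with $\n\tl u=\n u\,\chi_{B_R(0)}$, so that $\|\n\tl u\|_{L^p(\O)}=\|\n u\|_{L^p(\O\cap B_R(0))}$; moreover $\tl u$ has compact support in $\ov\O$. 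Granting the half space Sobolev inequality
\begin{equation*}
\|\vp\|_{L^{p^*}(\O)}\le C_{n,p}\|\n\vp\|_{L^p(\O)},\qquad \vp\in D^{1,p}(\O),
\end{equation*}
applied to $\vp=\tl u$, we get $\|u\|_{L^{p^*}(\O\cap B_R(0))}=\|\tl u\|_{L^{p^*}(\O)}\le C_{n,p}\|\n u\|_{L^p(\O\cap B_R(0))}$, which is $(\ref{e3})$. Notice that we impose \emph{no} condition on $u$ along the flat part $B_R(0)\cap\p\O$; this costs nothing because the half space Sobolev inequality requires no boundary condition on $\p\O$.

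It therefore remains to establish the half space Sobolev inequality. First I would prove it for $\vp\in C_c^\infty(\ov\O)$ by even reflection across $\{t=0\}$: the function $\ov\vp(y,t):=\vp(y,|t|)$ lies in $W^{1,\infty}(\rz^n)$ with compact support, hence in $W^{1,p}(\rz^n)$, and satisfies $\|\ov\vp\|_{L^{p^*}(\rz^n)}^{p^*}=2\|\vp\|_{L^{p^*}(\O)}^{p^*}$ together with $\|\n\ov\vp\|_{L^p(\rz^n)}^p=2\|\n\vp\|_{L^p(\O)}^p$ (the tangential components of $\n\vp$ are reflected evenly and the normal one oddly, so $|\n\ov\vp|$ is the even reflection of $|\n\vp|$). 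The classical Euclidean Sobolev inequality applied to $\ov\vp$ then yields the claim, with constant $2^{\frac1p-\frac1{p^*}}$ times the sharp Euclidean constant, which we still denote $C_{n,p}$. A density argument — approximating $\tl u$ in $W^{1,p}(\O)$ by functions in $C_c^\infty(\ov\O)$, which is possible since $\tl u$ has compact support in $\ov\O$ and $\ov\O$ is a Lipschitz domain — then extends the inequality from $C_c^\infty(\ov\O)$ to $\tl u$.

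The only delicate point is the assertion that the zero extension $\tl u$ lies in $W^{1,p}(\O)$, i.e.\ that the vanishing trace of $u$ on $\p B_R(0)\cap\O$ prevents a singular surface contribution to the distributional gradient of $\tl u$ along the interface $\p B_R(0)\cap\O$. I expect this to be the main obstacle, although it is standard: it follows from the characterization of $W^{1,p}$ functions with zero trace on a relatively open portion of a Lipschitz boundary as $W^{1,p}$-limits of functions vanishing in a neighbourhood of that portion, for which the zero extension is trivially in $W^{1,p}(\O)$, so that one concludes by passing to the limit. Alternatively, one may first invoke the standard (bounded Lipschitz domain) Sobolev embedding to know $u\in L^{p^*}(\O\cap B_R(0))$ already, so that the issue reduces to obtaining the $R$-independent constant $C_{n,p}$, and then exploit the scale invariance of $(\ref{e3})$ under $x\mapsto x/R$ (recall $p^*=\frac{np}{n-p}$) to reduce to $R=1$ before running the extension argument on the fixed half ball $\O\cap B_1(0)$.
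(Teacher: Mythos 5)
Your proof is correct and uses essentially the same idea as the paper: the paper reflects $u$ evenly across $\{t=0\}$ to obtain $\ov u\in W_0^{1,p}(B_R)$ (using the vanishing trace on $\p B_R\cap\O$) and applies the classical Sobolev inequality, while you perform the zero extension and the even reflection in the opposite order before invoking the same Euclidean inequality. The extra details you supply (why the zero extension stays in $W^{1,p}$, the density argument, the explicit factor $2^{\frac1p-\frac1{p^*}}$) are correct but the paper simply takes them as standard.
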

\begin{proof}
    Reflecting $u$, we get $$\ov u(y,t)=\begin{cases}
        u(y,t) &  , t\ge 0\\
        u(y,-t) &, t<0.
    \end{cases}$$
    Then applying Sobolev inequality with $\ov u\in W_0^{1,p}(B_R)$, we obtain Observation \ref{o2}.
\end{proof}
\begin{rem}\label{r2.1.5}
    Like the proof in Observation \ref{o1}, we can see that $(\ref{e3})$ also holds for any $u\in D^{1,p}(\O)$.
\end{rem}

\begin{obs}\label{o3}
    For $0<x<t<y,\,\e\in(0,1)$, we have $$\|u\|_{L^t}\le \e\|u\|_{L^x}+\e^{-\s} \|u\|_{L^y}$$
    where $\s=\frac{(y-t)x}{(t-x)y},\,u\in L^x\cap L^y$.
\end{obs}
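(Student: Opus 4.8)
The plan is to deduce this from the classical log\,-\,convexity (interpolation) inequality for $L^p$ norms, followed by a weighted Young inequality. First I would introduce the interpolation exponent $\theta\in(0,1)$ determined by
\[
\frac1t=\frac{\theta}{x}+\frac{1-\theta}{y},
\]
and solve explicitly: $\theta=\frac{x(y-t)}{t(y-x)}$ and $1-\theta=\frac{y(t-x)}{t(y-x)}$. The hypothesis $0<x<t<y$ makes $\theta\in(0,1)$ (indeed $\theta<1\iff x<t$, and positivity is clear). Applying H\"older's inequality with the conjugate exponents $\frac{x}{t\theta}$ and $\frac{y}{t(1-\theta)}$ to the splitting $\int|u|^{t}=\int|u|^{t\theta}\,|u|^{t(1-\theta)}$ and then taking $t$-th roots gives the standard bound
\[
\|u\|_{L^t}\le\|u\|_{L^x}^{\theta}\,\|u\|_{L^y}^{1-\theta}.
\]

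Next I would convert this product into a sum. The point is that the exponent appearing in the statement is precisely $\s=\frac{(y-t)x}{(t-x)y}=\frac{\theta}{1-\theta}$, so writing $a:=\|u\|_{L^x}$ and $b:=\|u\|_{L^y}$ we have, for every $\lambda>0$,
\[
a^{\theta}b^{1-\theta}=(\lambda a)^{\theta}\,(\lambda^{-\s}b)^{1-\theta}.
\]
Young's inequality with exponents $\frac1\theta$ and $\frac1{1-\theta}$ (conjugate since $\theta+(1-\theta)=1$) then yields
\[
a^{\theta}b^{1-\theta}\le\theta\lambda\,a+(1-\theta)\lambda^{-\s}\,b.
\]
Choosing $\lambda=\e$ and discarding the factors $\theta<1$ and $1-\theta<1$ gives exactly $\|u\|_{L^t}\le\e\|u\|_{L^x}+\e^{-\s}\|u\|_{L^y}$.

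The argument has no real difficulty. The only genuine verification is the algebraic identity $\s=\theta/(1-\theta)$, which is what ensures that the power of $\e$ in the second term comes out as $-\s$; this, together with the bounds $0<\theta<1$ and the admissibility of the H\"older exponents, is immediate from $0<x<t<y$. One should also observe that the inequality is only meaningful when $u\in L^x\cap L^y$, which is assumed, so that every quantity above is finite.
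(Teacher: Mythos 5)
Your proof is correct and is essentially the paper's argument: the paper writes $t=\l x+(1-\l)y$ with $\l=\frac{y-t}{y-x}$ and applies H\"older with exponents $1/\l$, $1/(1-\l)$, which is exactly your application with $\theta=\frac{\l x}{t}=\frac{x(y-t)}{t(y-x)}$, yielding the same bound $\|u\|_{L^t}\le\|u\|_{L^x}^{\theta}\|u\|_{L^y}^{1-\theta}$. The paper then invokes Young's inequality without detail, and your weighted Young step with $\s=\theta/(1-\theta)$ is precisely the computation it leaves implicit.
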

\begin{proof}
    Write $t=\l x+(1-\l)y$, then $\l=\frac{y-t}{y-x}\in (0,1)$. By H\"{o}lder's inequality and Young's inequality, we get
    \begin{align*}   
    \int u^t=&\int u^{\l x+(1-\l)y}\le \bigg(\int u^x \bigg)^\l \bigg(\int u^y \bigg)^{1-\l}\\
    \RA \|u\|_{L^t}\le &\|u\|_{L^x}^{\frac{\l x}{t }}\|u\|_{L^y}^{\frac{(1-\l)y}{t}}\le \e\|u\|_{L^x}+\e^{-\s} \|u\|_{L^y}
    \end{align*}
    
\end{proof}

\subsection{Boundedness of solutions}
Using Moser iteration like \cite{Veto16,W10,MW19,CFR20}, we can prove that solutions to $(\ref{e1})$ are bounded. Indeed, the result holds for more
general Neumann problem.

\begin{lem}\label{l1}
    Suppose $u\in D^{1,p}(\O)$ is a solution to 
\begin{equation}\label{e4}
    \left\{
    \begin{array}{lr}
	\div (a(\n u))=0 & ,\T{in} \,\O\\
	a(\n u)\cdot \nu=\Phi(x,u) & ,x\in \p \O\\
    u>0 & ,\T{in} \,\O
	\end{array}
 \right.
\end{equation}
where $a:\rz^n\ra\rz^n$ is a continuous vector field such that the following holds: there exist $\a>0,\,\l\ge 0,\,\g \ge 0$ and $0\le s\le 1/2$ such that 
\begin{equation}\label{e5}
    \left\{
    \begin{array}{lr}
	|a(x)|\le \a (|x|^2+s^2)^{\frac{p-1}{2}}&,\,x\in \rz^n \\
	x\cdot a(x)\ge \frac{1}{\a}\int_0^1(t^2|x|^2+s^2)^{\frac{p-2}{2}}|x|^2 dt&,\,x\in \rz^n\\
    |\Phi(x,z)|\le \l |z|^q+\g&,\,x\in \rz^n,\,z\in \rz.
	\end{array}
 \right.
\end{equation}
Then there exist $\d>0$ with the following property: let $0<\rho\le c_n$, $x_0\in\O$ be such that 
\begin{align*}
    &\T Vol (B_{c_n})\le 1,\,\T Area (B_{c_n}\cap \p\O)\le 1,\\
    &\|u\|_{L^{p_*}(B_{\rho}(x_0)\cap \p\O)}\le \d,\,\|u\|_{L^{p^*}(B_{\rho}(x_0)\cap \O)}\le \d
\end{align*}
   then for any $t>0$
   $$\|u\|_{L^\infty(B_{\rho/4}(x_0)\cap \O)}\le C\big(\|u\|_{L^t(B_{\rho/2}(x_0)\cap \O)}+\|u\|_{L^t(B_{\rho/2}(x_0)\cap \p\O)}+1\big)$$
   where $C$ depend only on $n,p,\a,\rho,\l,t,\g$; and $\d$ depend only on $n,p,\a,\l$.
\end{lem}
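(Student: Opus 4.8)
The strategy is the standard Moser iteration for a Neumann problem, combined with a small-energy absorption trick. First I would set up the iteration by choosing the test function $\varphi = \eta^p |u|^{\beta-1}u$ (truncated at level $L$ to make it admissible, i.e.\ replacing $u$ by $u_L := \min\{u,L\}$ where needed so that $\varphi \in D^{1,p}(\Omega)$ and the integrals are finite) for exponents $\beta \ge 1$, and plug it into the weak formulation
\[
\int_{\Omega} a(\nabla u)\cdot \nabla \varphi\, dx = \int_{\p\Omega}\Phi(x,u)\varphi\, d\sigma.
\]
Using the structure conditions $(\ref{e5})$: the lower bound on $x\cdot a(x)$ gives, after expanding $\nabla\varphi$, a good term $\gtrsim \beta \int \eta^p (|\nabla u|^2 + s^2)^{(p-2)/2}|\nabla u|^2 |u|^{\beta-1}$ controlling $\int \eta^p |\nabla(|u|^{(\beta+p-1)/p})|^p$ up to constants depending on $\beta$; the upper bound $|a(x)|\le \alpha(|x|^2+s^2)^{(p-1)/2}$ together with Young's inequality absorbs the cross term involving $\nabla\eta$ at the cost of a term $\int |\nabla\eta|^p |u|^{\beta+p-1}$. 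On the boundary, $|\Phi(x,u)|\le \lambda|u|^q+\gamma$ with $q = p_*-1$ produces $\lambda\int_{\p\Omega}\eta^p|u|^{\beta+q} + \gamma\int_{\p\Omega}\eta^p|u|^\beta$.

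**The key small-energy step.** Writing $w := \eta\, |u|^{(\beta+p-1)/p}$ (so $w\in W^{1,p}(\Omega\cap B_R)$ vanishing near $\p B_R$), the boundary term $\lambda\int_{\p\Omega}\eta^p|u|^{\beta+q}$ must be bounded by something absorbable into $\|\nabla w\|_{L^p}^p$. The point is that $\beta + q = \beta+p-1 + (q-p+1)$ and, since $q = p_*-1 = p_* - p\cdot\frac{1}{p}\cdots$, one checks the exponent arithmetic works out so that $\int_{\p\Omega}\eta^p|u|^{\beta+q} \le \|w\|_{L^{p_*}(\p\Omega)}^p \cdot \|u\|_{L^{p_*}(B_\rho\cap\p\Omega)}^{\,p_* - p}$ — i.e.\ the "extra" power of $u$ on the boundary carries exactly the scaling-critical norm $\|u\|_{L^{p_*}}$ on the small set $B_\rho\cap\p\Omega$. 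Here I would use Observation \ref{o2} / Remark \ref{r2.1.5} (the trace-Sobolev inequality $(\ref{e2})$, valid for $w$ since $w$ vanishes on $\p B_R\cap\Omega$) to bound $\|w\|_{L^{p_*}(\p\Omega)}^p \le C_{n,p}\|\nabla w\|_{L^p(\Omega)}^p$. Then, if $\delta$ is chosen so small that $\lambda C_{n,p}\,\delta^{\,p_*-p} \le \tfrac12$, this term is absorbed into the left side; this is precisely where the hypotheses $\|u\|_{L^{p_*}(B_\rho\cap\p\Omega)}\le\delta$ and $\|u\|_{L^{p^*}(B_\rho\cap\Omega)}\le\delta$ (the latter handling, analogously, the interior gradient term if $\lambda$ or cross-terms require it, and giving $\delta$ depending only on $n,p,\alpha,\lambda$) are used. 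The remaining lower-order terms $\gamma\int_{\p\Omega}\eta^p|u|^\beta$, $\int|\nabla\eta|^p|u|^{\beta+p-1}$, and the $s$-dependent terms (bounded since $s\le 1/2$) are all controlled by $\|u\|_{L^t}+1$-type quantities after using Observation \ref{o3} to interpolate the intermediate $L^\beta$, $L^{\beta+p-1}$ norms between $L^t$ and the higher norm produced by Sobolev.

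**Running the iteration.** After the absorption, I obtain a reverse-Hölder/Caccioppoli inequality of the form $\|w\|_{L^{p^*}(\Omega)}^p + \|w\|_{L^{p_*}(\p\Omega)}^p \le C(1+\beta)^{\kappa}\bigl(\rho^{-p}+1\bigr)\bigl(\int_{\Omega\cap B_{\rho}}|u|^{\beta+p-1} + \int_{\p\Omega\cap B_\rho}|u|^{\beta+p-1-\text{(trace loss)}}+1\bigr)$, which in terms of $u$ reads: a bound for $\|u\|_{L^{a(\beta+p-1)}(\Omega\cap B_{r'})} + \|u\|_{L^{b(\beta+p-1)}(\p\Omega\cap B_{r'})}$ in terms of $\|u\|_{L^{\beta+p-1}}$ on a slightly larger ball (with $a = \frac{n}{n-p}$, $b=\frac{n-1}{n-p}>1$ the gain factors coming from $p^*/p$ and $p_*/p$ respectively). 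I would then iterate over the sequence of exponents $\beta_k + p - 1 = (\min\{a,b\})^k t'$ starting from $t' = \max\{t, \text{something}\ge 1\}$, shrinking radii geometrically from $\rho/2$ to $\rho/4$, and sum the logarithms: $\sum_k (\min\{a,b\})^{-k}\log(C(1+\beta_k))<\infty$ because $\min\{a,b\}=b>1$. This yields $\|u\|_{L^\infty(B_{\rho/4}\cap\Omega)} \le C(\|u\|_{L^{t'}(B_{\rho/2}\cap\Omega)} + \|u\|_{L^{t'}(B_{\rho/2}\cap\p\Omega)}+1)$, and for $t<t'$ one lowers the starting exponent by a final application of Observation \ref{o3} (or Hölder on the bounded sets, using $\mathrm{Vol}(B_{c_n})\le 1$, $\mathrm{Area}(B_{c_n}\cap\p\Omega)\le1$). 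The dependence of $C$ on $n,p,\alpha,\rho,\lambda,t,\gamma$ and of $\delta$ on $n,p,\alpha,\lambda$ is tracked through the constants above.

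**Main obstacle.** The delicate point is not the iteration itself but making the \emph{boundary} term genuinely absorbable: one must verify the exponent identity that lets $\|u\|_{L^{p_*}(B_\rho\cap\p\Omega)}^{p_*-p}$ appear as the coefficient — i.e.\ that $q$ being the trace-critical exponent $p_*-1$ is exactly what makes $\beta+q$ split as $(\beta+p-1)$-worth of $w^{p_*}$ plus a $\delta$-small factor — and to do this uniformly in $\beta$ while keeping the admissibility of the truncated test functions. The interior structure conditions on $a$ (the two-sided bounds with the parameter $s$) are handled routinely as in \cite{Veto16,CFR20}, and the extra care is only in carrying the degenerate weight $(|\nabla u|^2+s^2)^{(p-2)/2}$ through Young's inequality when $p<2$; this is standard but must be written carefully.
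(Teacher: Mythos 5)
Your overall skeleton --- truncated test functions $\eta^p\bar u^{\beta}$, the structure conditions yielding a Caccioppoli-type inequality, the trace--Sobolev step via Observation \ref{o2}, the exponent identity $\beta+q=(\beta+p-1)+(p_*-p)$ with $p_*-p=\tfrac{p(p-1)}{n-p}$, and the iteration with gain $b=\tfrac{n-1}{n-p}>1$ followed by lowering the starting exponent to arbitrary $t>0$ --- matches the paper's proof. But there is a genuine gap precisely at the point you yourself flag as the ``main obstacle'': absorbing the critical boundary term uniformly in $\beta$. After normalizing the left-hand side to $\int_\Omega|\nabla(\eta\,\bar u^{(\beta+p-1)/p})|^p\,dx$, the critical boundary term carries a coefficient $\lambda\beta^{p}$ (see (\ref{e9})), because converting the natural energy term $\beta\int\eta^p\bar u^{\beta-1}|\nabla\bar u|^p$ into $\|\nabla w\|_{L^p}^p$ costs a factor of order $\beta^{p-1}$. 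The absorption condition is therefore $\lambda\beta^{p}C_{n,p}\delta^{p_*-p}\le\tfrac12$, and no single choice of $\delta$ (independent of $\beta$) closes the estimate for all exponents in the iteration; your condition $\lambda C_{n,p}\delta^{p_*-p}\le\tfrac12$ only works for bounded $\beta$.

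The paper resolves this with a two-step bootstrap that your write-up does not contain. First, it runs the estimate once with the single fixed exponent $\beta=p_*-p+1$ (so $\beta^p$ is a dimensional constant and the $\delta$-absorption is legitimate), obtaining the improved integrability $\|\bar u\|_{L^{p_*b}(\partial\Omega\cap B_{\rho/2})}\le C$ as in (\ref{e11}). Second, for general $\beta$ it bounds the boundary term by H\"older against this improved norm, landing in $\|\eta^p\bar u^{\beta+p-1}\|_{L^{t_0}(\partial\Omega)}$ with a \emph{subcritical} $t_0\in(1,b)$, and only then applies Observation \ref{o3} to split it as $\epsilon\beta^{-p}\|\cdot\|_{L^{b}}+C\beta^{\sigma}\|\cdot\|_{L^{1}}$; the factor $\epsilon\beta^{-p}$ is exactly what cancels the $\beta^{p}$ and permits absorption uniformly in $\beta$ (see (\ref{e12})--(\ref{e13})). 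You do invoke Observation \ref{o3}, but only for the lower-order $\gamma$- and $s$-terms, not for the critical term where it is indispensable. A more minor divergence: the paper shifts the truncation by $k=\delta$ (i.e.\ $\bar u=\min\{u,l\}+\delta$) so that $\gamma\bar u^{\beta}$ and $s^p\bar u^{\beta-1}$ are dominated by constant multiples of $\bar u^{\beta+p-1}$, which is where the final ``$+1$'' in the conclusion comes from; your interpolation route for these terms is plausible but would need the same care about uniformity in $\beta$.
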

\begin{proof}
    Following the Moser iteration argument in \cite[Thm4.1]{HL00} and \cite[lem2.1]{CFR20}, we give a sketch of the proof. Given $l>0,\,k>0$, we define
    $\ov u=\min\{u,l\}+k, u_k=u+k$, then
    $$\ov u,u_k\in W_{loc}^{1,p}(\ov\O),\,l+k\ge\ov u\ge k>0,\,\T{and}\,\,\n \ov u=\begin{cases}
        \n u&\T{in}\,\{u\le l\}\\
        0&\T{in}\,\{u>l\}
    \end{cases}$$
Let $\eta\in C_c^\infty(\rz^n),\,\b\ge  p_*-p+1$ and use $\vp=\eta^p(\ov u^{\b-1}u_k-k^\b)$ as a test function in $(\ref{e4})$, then we get
\begin{align}
    \int_\O a(\n u)\cdot\n(\eta^p(\ov u^{\b-1}u_k-k^\b)) dx=\int_{\p\O}& \Phi(x,u)\eta^p(\ov u^{\b-1}u_k-k^\b) d\s\non\\
    \RA (\b-1)\int_\O a(\n \ov u)\cdot\n \ov u \,\eta^p\ov u^{\b-1} d x+&\int_\O a(\n u)\cdot\n u \,\eta^p\ov u^{\b-1} d x\non\\
    =-p\int_\O a(\n u)\cdot\n\eta\,\eta^{p-1}(\ov u^{\b-1}u_k-k^\b) dx
    &+\int_{\p\O} \Phi(x,u)\eta^p(\ov u^{\b-1}u_k-k^\b) d\s\label{e6}
\end{align}
As in \cite[lem2.1]{CFR20}, $(\ref{e5})$ implies \begin{equation}\label{e7}
    x\cdot a(x)\ge C_\a (|x|^p-s^p)
\end{equation}
From $(\ref{e6}),\,(\ref{e7})$,  $(\ref{e5})$ and $s\in [0,1/2]$, we have
\begin{align}
    &(\b-1)\int_\O |\n \ov u |^p\,\eta^p\ov u^{\b-1} d x+\int_\O |\n u |^p\,\eta^p\ov u^{\b-1} d x\non\\
    &\le C_1\bigg(
    \int_\O |a(\n u)||\n\eta|\,\eta^{p-1}(\ov u^{\b-1}u_k-k^\b) dx+\b\int_{\O} s^p\eta^p\ov u^{\b-1} d x\non\\
    &+\int_{\p\O} (\l u^q+\g)\eta^p(\ov u^{\b-1}u_k-k^\b) d\s\bigg)\non\\
    &\le C_1\bigg(
    \int_\O |a(\n u)||\n\eta|\,\eta^{p-1}\ov u^{\b-1}u_k dx+\int_{\p\O} (\l u_k^q+\g)\eta^p\ov u^{\b-1}u_k d\s\non\\&+\b\int_{\O}\eta^p\ov u^{\b-1} d x\bigg)
    \label{e8}
\end{align}
where $C_1$ always depends only on $\a,p$.\\
   Note that $(a+b)^x\le \max\{1,2^{x-1}\}(a^x+b^x)$ for $a,b,x>0$. By Young's inequality and $(\ref{e5})$, for any $\e\in (0,1)$, we have
   \begin{align*}   
   |a(\n u)||\n\eta|\,\eta^{p-1}\ov u^{\b-1}u_k\le &\e^{\frac{p}{p-1}}|a(\n u)|^{\frac{p}{p-1}}\,\eta^p\ov u^{\b-1}+ \e^{-p}\ov u^{\b-1}u_k^p|\n \eta|^p\\
   \le& C_p\e^{\frac{p}{p-1}}(|\n u|^p+s^p)\,\eta^p\ov u^{\b-1}+ \e^{-p}\ov u^{\b-1}u_k^p|\n \eta|^p
   \end{align*}
Choosing $\e>0$ small enough which depends only on $\a,p$, and using the fact that $\b\ge p_*-p+1$, we deduce that
\begin{align}   
&(\b-1)\int_\O |\n \ov u |^p\,\eta^p\ov u^{\b-1} d x+\frac{1}{2}\int_\O |\n u |^p\,\eta^p\ov u^{\b-1} d x\non\\
&\le
C_1\bigg(
    \int_\O \ov u^{\b-1}u_k^p|\n \eta|^p d x+\int_{\p\O} (\l u_k^q+\g)\eta^p\ov u^{\b-1}u_k d\s\non\\&+\b\int_{\O}\eta^p\ov u^{\b-1} d x\bigg)\non\\
    \RA& \int_\O |\n(\eta \ov u^{\frac{\b-1}{p}}u_k)|^p d x\le
    C_1\bigg(
    \b^{p-1}\int_\O \ov u^{\b-1}u_k^p|\n \eta|^p d x\non\\&+\b^{p-1}\int_{\p\O} (\l u_k^q+\g)\eta^p\ov u^{\b-1}u_k d\s+\b^p\int_{\O}\eta^p\ov u^{\b-1} d x\bigg)
\non
\end{align}
Since $\b\ge p_*-p+1,\, u_k\ge k$, we get
\begin{align}
    \int_\O |\n(\eta \ov u^{\frac{\b-1}{p}}u_k)|^p d x\le&
    C_1\bigg(\b^{p-1}
    \int_\O \ov u^{\b-1}u_k^p|\n \eta|^p d x+(\frac{\b}{k})^p\int_{\O}\eta^p\ov u^{\b-1}u_k^p d x\non\\+&\g(\frac{\b}{k})^{p-1}\int_{\p\O}\eta^p\ov u^{\b-1}u_k^p d\s+\l\b^{p}\int_{\p\O}\eta^p\ov u^{\b-1}u_k^{q+1} d\s\bigg)\label{e9}
\end{align}
where $C_1$ always depends only on $\a,p$.\\
Note $\eta^p\ov u^{\b-1}u_k^{q+1}= \eta^p\ov u^{\b-1}u_k^p\cdot u_k^{p_*-p}$. If we choose $\eta$ such that $\operatorname{supp}(\eta)\subset B_\rho(x_0)$, it follows from H\"{o}lder's inequality and Sobolev trace inequality that
\begin{align}  
\int_{\p\O}\eta^p\ov u^{\b+q-1}u_k d\s\le
&\| u_k\|_{L^{p_*}(B_{\rho}(x_0)\cap \p\O)}^{\frac{p(p-1)}{n-p}}\,\|\eta \ov u^{\frac{\b-1}{p}}u_k\|_{L^{p_*}(\p\O)}^p\non\\
\le
&(\|u\|_{L^{p_*}(B_{\rho}(x_0)\cap \p\O)}+k)^{\frac{p(p-1)}{n-p}}\|\eta \ov u^{\frac{\b-1}{p}}u_k\|_{L^{p_*}(\p\O)}^p\non\\
\le& C_{n,p} \,(\d+k)^{\frac{p(p-1)}{n-p}}\int_\O |\n(\eta \ov u^{\frac{\b-1}{p}}u_k)|^p d x\label{e10}
\end{align}
First, we fix $\b=p_*-p+1,\,k=\d$ and choose $\d>0$ small enough which depends only on $\l,n,p,\a$. 
From $(\ref{e9}),(\ref{e10})$, we get
\begin{align}
    \int_\O |\n(\eta \ov u^{\frac{\b-1}{p}}u_k)|^p d x\le&
    C_2\bigg(\b^{p-1}
    \int_\O \ov u^{\b-1}u_k^p|\n \eta|^p d x+\b^p\int_{\O}\eta^p\ov u^{\b-1}u_k^p d x\non\\&+\g\b^{p}\int_{\p\O}\eta^p\ov u^{\b-1}u_k^p d\s\bigg)\non
\end{align}
where $C_2$ always depends only on $\a,n,p,\l$.\\
Hence, thanks to the Sobolev trace inequality and Observation \ref{o2}, we obtain 
\begin{align}
    \|\eta^p\ov u^{\b-1}u_k^p\|_{L^a(\O)}+&\|\eta^p\ov u^{\b-1}u_k^p\|_{L^b(\p\O)}\le
    C_2\bigg(\b^{p-1}
    \int_\O \ov u^{\b-1}u_k^p|\n \eta|^p d x\non\\&+\b^p\int_{\O}\eta^p\ov u^{\b-1}u_k^p d x+\g\b^{p}\int_{\p\O}\eta^p\ov u^{\b-1}u_k^p d\s\bigg)\non
\end{align}
Since $\rho\le c_n,\,\T Vol (B_{c_n})\le 1,\,\T Area (B_{c_n}\cap \p\O)\le 1$, here we choose $\eta$ such that $\eta=1$ in $B_{\rho/2}(x_0)$ and $|\n \eta|\le C_n/\rho$, then by H\"{o}lder's inequality and $u_k\ge\ov u$, we have
\begin{align}
    &\|\ov u^{p_*-p}u_k^p\|_{L^a(\O\cap B_{\rho/2}(x_0))}+\|\ov u^{p_*-p}u_k^p\|_{L^b(\p\O\cap B_{\rho/2}(x_0))}\non\\
    &\le C_3\bigg(\| u_k^{p_*}\|_{L^{\frac{a}{b}}(\O\cap B_{\rho}(x_0))}
+\g\|u_k^{p_*}\|_{L^{1}(\p\O\cap B_{\rho}(x_0))}\bigg)\non\\
&\le C_3\bigg((\|u\|_{L^{p^*}(\O\cap B_{\rho}(x_0))}+k)^{p_*}
+\g(\| u\|_{L^{p_*}(\p\O\cap B_{\rho}(x_0))}+k)^{p_*}\bigg)\non
\end{align}
So, letting $l\ra \infty$, we get
\begin{align}
\|u_k\|_{L^{p_* b}(\p\O\cap B_{\rho/2}(x_0))}\le &C_3\bigg(\|u\|_{L^{p^*}(\O\cap B_{\rho}(x_0))}+\g^{\frac{1}{p_*}}\| u\|_{L^{p_*}(\p\O\cap B_{\rho}(x_0))}+\d\bigg)\label{e11}\\
\le &C_3(\d+\d\g^{\frac{1}{p_*}} ):=C_4\non
\end{align}
where $C_3$ always depends only on $\a,n,p,\l,\rho$.\\
Second, let $\b$ vary, choose $\eta$ such that $\operatorname{supp}(\eta)\subset B_{\rho/2}(x_0)$ and fix $k=\d$, we come back to $(\ref{e9})$. Like $(\ref{e10})$, we have
\begin{align}  
\int_{\p\O}\eta^p\ov u^{\b-1}u_k^{q+1} d\s\le
&\| u_k\|_{L^{p_* b}(B_{\rho/2}(x_0)\cap \p\O)}^{\frac{p(p-1)}{n-p}}\,\|\eta^p \ov u^{\b-1}u_k^p\|_{L^{t_0}(\p\O)}\non\\
\le
&C_4^{\frac{p(p-1)}{n-p}}\|\eta^p \ov u^{\b-1}u_k^p\|_{L^{t_0}(\p\O)}\label{e12}
\end{align}
where $\frac{1}{t_0}+\frac{b-1}{b^2}=1\RA 1<t_0<b=\frac{n-1}{n-p}$.\\
Then from Observation \ref{o3}, Sobolev trace inequality and $(\ref{e12})$, we obtain
\begin{align}  
\int_{\p\O}\eta^p\ov u^{\b-1}u_k^{q+1} d\s
\le
&C_4^{\frac{p(p-1)}{n-p}}\|\eta^p \ov u^{\b-1}u_k^p\|_{L^{t_0}(\p\O)}\non\\
\le& \e \b^{-p} \|\eta^p \ov u^{\b-1}u_k^p\|_{L^{b}(\p\O)}+ C_5\b^{\s}\|\eta^p\ov u^{\b-1}u_k^p\|_{L^{1}(\p\O)}\non\\
\le& C_{n,p}\e \b^{-p} \int_\O |\n(\eta \ov u^{\frac{\b-1}{p}}u_k)|^p d x+ C_5\b^{\s}\|\eta^p\ov u^{\b-1}u_k^p\|_{L^{1}(\p\O)}
\label{e13}
\end{align}
where $\s=\frac{p(b-t_0)}{b(t_0-1)}=p(b-1)$ and $C_5\sim C_4,\e$.\\
Then we choose $\e>0$ small enough depending only on $n,p,\a,\l$. From $(\ref{e9}),\,(\ref{e13})$, we get
\begin{align}
    \int_\O |\n(\eta \ov u^{\frac{\b-1}{p}}u_k)|^p d x\le&
    C_5\bigg(\b^{p-1}
    \int_\O \ov u^{\b-1}u_k^p|\n \eta|^p d x+\b^p\int_{\O}\eta^p\ov u^{\b-1}u_k^p d x\non\\&+(\g\b^{p}+\b^{p(b-1)})\int_{\p\O}\eta^p\ov u^{\b-1}u_k^p d\s\bigg)\label{e14}
\end{align}
Also, thanks to the Sobolev trace inequality and Observation \ref{o2}, we get
\begin{align}
    \|\eta^p\ov u^{\b-1}u_k^p&\|_{L^a(\O)}+\|\eta^p\ov u^{\b-1}u_k^p\|_{L^b(\p\O)}\le
    C_5\bigg(\b^{p-1}
    \int_\O \ov u^{\b-1}u_k^p|\n \eta|^p d x+\non\\&\b^p\int_{\O}\eta^p\ov u^{\b-1}u_k^p d x+(\g\b^{p}+\b^{p(b-1)})\int_{\p\O}\eta^p\ov u^{\b-1}u_k^p d\s\bigg)\non
\end{align}
Since $\rho\le c_n,\,\T Vol (B_{c_n})\le 1,\,\T Area (B_{c_n}\cap \p\O)\le 1$, here we choose $\eta$ such that $\eta=1$ in $B_{r}(x_0)$, $\eta=0$ outside $B_{R}(x_0)$ and $|\n \eta|\le C_n/(R-r)$, where $0<r<R<\rho/2$. Then by H\"{o}lder's inequality we have
\begin{align}
    \|\ov u^{\b-1}u_k^p\|_{L^b(\O\cap B_r(x_0))}+\|\ov u^{\b-1}u_k^p&\|_{L^b(\p\O\cap B_r(x_0))}\le
    \non\\C_5\bigg(\frac{1}{(R-r)^p}\b^p\|\ov u^{\b-1}u_k^p\|_{L^1(\O\cap B_R(x_0))}
+&(\g+1)\b^\tau\|\ov u^{\b-1}u_k^p\|_{L^1(\p\O\cap B_R(x_0))}\bigg)\label{e15}
\end{align}
where $\tau=\max\{p,p(b-1)\}$, $C_5$ depend only on $\a,n,p,\l,\rho,\g$.\\
Hence, as in the proof of \cite[Thm4.1]{HL00}, a classical Moser iteration argument yields the following result
\begin{equation}   \| u\|_{L^\infty(B_{\rho/4}(x_0)\cap \O)}\le C_5\big(\| u\|_{L^{p_*}(B_{\rho/2}(x_0)\cap \O)}+\| u\|_{L^{p_*}(B_{\rho/2}(x_0)\cap \p\O)}+1\big).\label{e16}
\end{equation}
    Then following the argument in \cite[Remark 4.2]{HL00}, we get the result.
\end{proof}
\begin{rem}\label{r2.2}
    Note if $s=\g=0$, we can choose $k=0$, and then $(\ref{e9})$ becomes
    \begin{align}
    \int_\O |\n(\eta \ov u^{\frac{\b-1}{p}}u)|^p d x\le
    C_1\bigg(
    \int_\O \ov u^{\b-1}u^p|\n \eta|^p d x+\l\b^{p}\int_{\p\O}\eta^p \ov u^{\b-1}u^{q+1} d\s\bigg)\non
\end{align}
and 
$(\ref{e11})$ becomes
\begin{align}
   \|\ov u\|_{L^{p_* b}(\O\cap B_{\rho/2}(x_0))}+\|\ov u\|_{L^{p_* b}(\p\O\cap B_{\rho/2}(x_0))}&\le C_3\|u\|_{L^{p^*}(\O\cap B_{\rho}(x_0))}.\non
\end{align}
So $(\ref{e16})$ becomes   $$\|u\|_{L^\infty(B_{\rho/4}(x_0)\cap \O)}\le C\|u\|_{L^{p_*}(B_{\rho/2}(x_0)\cap \O)}$$
   and then Lemma \ref{l1} becomes: for any $t>0$
   $$\|u\|_{L^\infty(B_{\rho/4}(x_0)\cap \O)}\le C\|u\|_{L^t(B_{\rho/2}(x_0)\cap \O)}.$$
\end{rem}

\subsection{Asymptotic bounds on $u$ and $\n u$}
Imitating the proof in \cite[Thm1.1]{Veto16} and \cite{L19}, we can prove the following proposition.

\begin{prop}\label{p1}
   Let $n\ge 2,\,1<p<n,$ and let $u$ be a solution to $(\ref{e1})$. Then there exist  $C_0,\,C_1,\,\a>0$ such that 
   \begin{align}
       u(x)\le &\frac{C_1}{1+|x|^\frac{n-p}{p-1}}\,,\,\,|\n u(x)|\le \frac{C_1}{1+|x|^\frac{n-1}{p-1}}\,\,\,\T{in} \,\,\,\O\label{e17.1}\\
       &u(x)\ge\frac{C_0}{1+|x|^\frac{n-p}{p-1}}\quad\T{in} \quad\O\label{e17.2}\\
       \big||x|^\frac{n-p}{p-1}u(x)&- \a\big|+\big||x|^\frac{n-1}{p-1}\n u(x)+\a\frac{n-p}{p-1}x\big|\ra 0\,\,\T{as}\,\,|x|\ra\infty\label{e17.3}
   \end{align}
\end{prop}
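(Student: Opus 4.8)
The plan is to adapt the asymptotic analysis of \cite{Veto16,W10,MW19,L19} to the present Neumann setting. First I would prove boundedness and decay to zero. Since $u\in D^{1,p}(\O)$, the tails of $\int_\O|\n u|^p$ and $\int_{\p\O}u^{p_*}$ over exterior balls vanish, and with Observation \ref{o2} (cut-offs on dyadic annuli) this gives $\|u\|_{L^{p^*}(\O\cap(B_{2R}\setminus B_R))}\to 0$. As $(\ref{e1})$ has the form $(\ref{e4})$--$(\ref{e5})$ with $s=\g=0$ and $\l=1$, Lemma \ref{l1} applies in the sharpened form of Remark \ref{r2.2}; using it with $t=p^*$ on small balls — via a covering argument near the origin, where the small-ball energy is small by absolute continuity, and directly near infinity, where the energy tails are small — gives $u\in L^\infty(\O)$ and $u(x)\to 0$. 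The regularity theory for $p$-Laplacian type equations \cite{L88,L91} (interior $C^{1,\a}$ estimates off $\{t=0\}$ and boundary estimates for the conormal problem near $\{t=0\}$) then yields $u\in C^{1,\a}_{loc}(\ov\O)$.

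Next, for the sharp decay I would rescale: $u_\l(x):=\l^{(n-p)/p}u(\l x)$ again solves $(\ref{e1})$ and has $D^{1,p}$ energy concentrating away from every annulus, so Remark \ref{r2.2} forces $\|u_\l\|_{L^\infty(\O\cap(B_{3/2}\setminus B_{3/4}))}\to 0$, i.e. the preliminary bound $u(x)=o(|x|^{-(n-p)/p})$. To upgrade it I would use the even reflection $\tl u(y,t):=u(y,|t|)$, which is a nonnegative $p$-superharmonic function on $\rz^n$ with Riesz measure $2u^q\,d\H^{n-1}$ carried by $\{t=0\}$ (the interior equation contributes no bulk term, and the jump of the conormal derivative across $\{t=0\}$ is $2|Du|^{p-2}\p_t u=-2u^q\le 0$); since the preliminary decay makes the source $u^q$ locally integrable against the $\De_p$-fundamental solution, a comparison argument on dyadic annuli as in \cite{W10,MW19} improves the decay of $u|_{\{t=0\}}$, hence of the source $u^q$ and of $\tl u$, in finitely many steps, until the Riesz mass of $\tl u$ is finite and $\tl u\le C|x|^{-(n-p)/(p-1)}$; this is the bound on $u$ in $(\ref{e17.1})$. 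I expect this bootstrap to be the main obstacle: under the soft bound $u=o(|x|^{-(n-p)/p})$ the source $\int_{\rz^{n-1}}u(y,0)^q\,dy$ is only borderline convergent, so one cannot invoke directly that a $p$-superharmonic function with finite Riesz mass decays like the fundamental solution, and the decays of $u$ and of its source must be improved simultaneously. The gradient bound in $(\ref{e17.1})$ then follows from \cite{L88,L91}: the rescaled estimate on $B_{|x|/2}(x)\cap\ov\O$ gives $|\n u(x)|\le \tfrac{C}{|x|}\sup_{B_{|x|/2}(x)}u\le C_1|x|^{-(n-1)/(p-1)}$ for $|x|\ge1$, while local boundedness of $\n u$ handles $|x|\le1$.

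For the lower bound $(\ref{e17.2})$, since $u>0$ the source measure $2u^q\,d\H^{n-1}$ is not identically zero, so $\tl u$ dominates from below the $p$-potential of a fixed nontrivial portion of it supported in $B_1$, and such a potential decays exactly like $|x|^{-(n-p)/(p-1)}$ at infinity; the delicate point, absent from the interior problems of \cite{Veto16,CFR20}, is that making this comparison rigorous up to $\{t=0\}$ — where the nonlinear conormal term prevents use of the interior Harnack inequality — is exactly where the small-energy Harnack inequality of Corollary \ref{cor1} is needed, as in the argument of \cite{Veto16}.

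Finally, for the sharp asymptotics $(\ref{e17.3})$ I would rescale at the correct rate: $v_R(x):=R^{(n-p)/(p-1)}u(Rx)$ solves $\De_p v_R=0$ in $\O$ with $|Dv_R|^{p-2}\p_t v_R=-R^{-1}v_R^q$ on $\{t=0\}$, and by $(\ref{e17.1})$--$(\ref{e17.2})$ the family $\{v_R\}$ is bounded above and below by fixed multiples of $|x|^{-(n-p)/(p-1)}$ on every annulus, uniformly in $R$. By \cite{L88,L91} and Arzel\`a--Ascoli, along any $R_k\to\infty$ a subsequence of $v_{R_k}$ converges in $C^1_{loc}(\ov\O\setminus\{0\})$ to a positive $v_\infty$ that is $p$-harmonic in $\O$ with vanishing conormal derivative on $\{t=0\}$ (since $R^{-1}\to0$) and trapped between two multiples of $|x|^{-(n-p)/(p-1)}$; reflecting, $v_\infty$ is a positive $p$-harmonic function on $\rz^n\setminus\{0\}$ with an isolated singularity at the origin and vanishing at infinity, hence $v_\infty(x)=\a|x|^{-(n-p)/(p-1)}$ by the classification of such functions (radial symmetry of isolated singularities of $p$-harmonic functions). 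A standard argument — or the known exterior-domain asymptotics for $p$-harmonic functions — shows $\a$ is independent of the sequence, and passing the $C^1$ convergence to the limit and differentiating the profile yields the statements for $u$ and $\n u$ in $(\ref{e17.3})$.
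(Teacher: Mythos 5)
The preliminary steps (global boundedness via Lemma \ref{l1}/Remark \ref{r2.2}, $C^{1,\a}_{loc}(\ov\O)$ regularity, and the soft decay $u(x)=o(|x|^{-\frac{n-p}{p}})$ from the energy-critical rescaling) are fine and match the paper. The genuine gap is in your passage from this soft decay to the sharp upper bound in $(\ref{e17.1})$. You propose to reflect $u$ evenly, view $\tl u$ as $p$-superharmonic with Riesz measure $2u^q\,d\H^{n-1}$ on $\{t=0\}$, and iterate a dyadic comparison to improve the decay "in finitely many steps" until the Riesz mass is finite. This iteration provably stalls: if $u(y,0)\lesssim |y|^{-\s}$, the Wolff/Riesz potential of the boundary measure decays like $|x|^{-\s'}$ with $\s'=\frac{\s q}{p-1}-1$, and the fixed point of this affine map is exactly $\s=\frac{n-p}{p}$ — your starting exponent. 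Since the fixed point is repelling but you start \emph{at} it (the little-$o$ gives no quantitative gain), no finite number of steps moves you above the threshold $\frac{(n-1)(n-p)}{n(p-1)}$ needed for finite Riesz mass (which, for $p<n$, is strictly larger than $\frac{n-p}{p}$). You correctly flag this as the main obstacle, but you supply no mechanism to overcome it, so the proof of $(\ref{e17.1})$ is incomplete. The paper's mechanism is different and is the essential missing idea: one first proves the weak Lebesgue bounds $u\in L^{(p-1)a,\infty}(\O)$ and $u\in L^{(p-1)b,\infty}(\p\O)$ (Lemma \ref{l3}) by testing with the truncations $u_h=\min\{u,h\}$ and closing a differential inequality for the tail function $G(h)$; these weak norms are exactly invariant under the rescaling $u_R(x)=R^{\frac{n-p}{p-1}}u(Rx)$ (see $(\ref{e51})$--$(\ref{e52})$), so combining Claim \ref{c1} with the small-energy local boundedness of Lemma \ref{l2} on annuli yields $\|u_R\|_{L^\infty(W\cap\O)}\le C$ uniformly, which is $(\ref{e17.1})$.

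The remaining two parts are closer to the paper but still lean on unproved steps. For $(\ref{e17.2})$ you correctly identify that a boundary Harnack inequality (Corollary \ref{cor1}) is the crux, but that corollary itself rests on Lemma \ref{l4} (Moser iteration on $u^{-1}$, a BMO bound for $\log u$, and John--Nirenberg), which you do not supply; the paper then concludes by a compactness/contradiction argument rather than by a lower Wolff-potential bound. For $(\ref{e17.3})$ your compactness argument and the identification of the blow-down limit with $\a|x|^{-\frac{n-p}{p-1}}$ (after reflection, via the classification of isolated singularities, or via Serrin's strict comparison principle as in the paper) is sound in outline, but the independence of $\a$ from the subsequence is not a throwaway remark: it is exactly Claim \ref{c2}, which the paper proves by a comparison-principle argument showing $\G_R(u)=\min_{S^{n-1}_+}R^{\frac{n-p}{p-1}}u(Rx)$ has a limit. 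You should either prove that claim or cite a precise exterior-asymptotics result that yields it.
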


From Observation \ref{o1} and Lemma \ref{l1}, we get $\|u\|_{L^\infty(\O)}\le C$. Hence, by elliptic regularity theory for p-Laplacian type equations \cite{L88,L91}, we obtain
$u\in C^{1,\a}_{loc}(\ov \O)$. Therefore, to prove $(\ref{e17.1})$ and $(\ref{e17.2})$, it suffices to show there exists $R_0>1$ such that 
\begin{equation}
    C_0\le |x|^\frac{n-p}{p-1}u(x)\le C_1,\,\,|x|^\frac{n-1}{p-1}|\n u(x)|\le C_1\,\,\T{in}\,\,\O\cap \{|x|>R_0\}\label{e18}
\end{equation}
For $R>1,\,x\in \O$, we define 
\begin{equation}
    u_R(x):=R^\frac{n-p}{p-1}u(R x)\label{e19}
\end{equation}
From $(\ref{e1})$, we have
\begin{equation}\label{e20}
    \left\{
    \begin{array}{lr}
	\Delta_p u_R=0 & , \T{in} \,\,\O\\
	|D u_R|^{p-2}\frac{\partial u_R}{\partial t}=-R^{-1}u_R^q & , \T{on} \,\,\p\O
	\end{array}
 \right.     
\end{equation}
In order to prove $(\ref{e18})$, it suffices to show there exists $R_1>1$ such that if $R>R_1$, then
\begin{equation}
    C_0\le u_R(x)\le C_1,\,\,|\n u_R(x)|\le C_1\,\,\T{in}\,\,\O\cap \{4<|x|<5\}\label{e21}
\end{equation}
Denote by $V(x):=R^{-1}u_R^{p_*-p}(x)=R^{p-1}u^{p_*-p}(R x),\,U:=\{1<|x|<8\}$, then 
$$\|V\|_{L^\frac{n-1}{p-1}(U\cap\p\O)}=\|u\|_{L^{p_*}(\p\O\cap\{R<|x|<8 R\})}^{\frac{p(p-1)}{n-p}}\ra 0\,\,\T{as}\,\,R\ra \infty.$$
Denote by $W_1:=\{2<|x|<7\}$, $W:=\{3<|x|<6\}$.

\subsubsection{Asymptotic upper bounds on $u$ and $\n u$}
Using the method in \cite{L19}, we shall get asymptotic upper bounds on $u$.
\begin{lem}\label{l2}
Suppose $u\in W^{1,p}\cap L^\infty(U\cap \O )$ is a positive solution to
    \begin{equation}\label{e22}
    \left\{
    \begin{array}{lr}
	\Delta_p u=0 & ,\T{in} \,\,\O\cap U\\
	|D u|^{p-2}\frac{\partial u}{\partial t}=-f(x) u^{p-1} & ,\T{on} \,\,\p\O\cap U
	\end{array}
 \right.     
\end{equation}
where $f\in L^{\frac{n-1}{p-1}}(U\cap\p\O)$, $|f(x)|\le \l|u(x)|^{p_*-p}$ in $U\cap\p\O$ and $\l\ge 0$.\\
Then there exist $\d>0$ with the following property: let $c_n\in(0,1),\,\l$ be such that 
\begin{align*}
    \T Vol (B_{c_n})\le 1,\,\,\T Area (B_{c_n}\cap \p\O)\le 1,
    \,\,\l \|u\|_{L^{p_*}(U\cap\p\O)}^\frac{p(p-1)}{n-p}\le \d,\,\,\l \|u\|_{L^{p^*}(U\cap\O)}^\frac{p(p-1)}{n-p}\le \d,
\end{align*}
   then for any $t>0$, $0<\rho\le c_n$, $x_0\in\O\cap W$
   $$\|u\|_{L^\infty(B_{\rho/2}(x_0)\cap \O)}\le C\|u\|_{L^t(B_{\rho}(x_0)\cap \O)}$$
   where $C$ depend only on $n,p,\rho,t$; and $\d$ depend only on $n,p$.
\end{lem}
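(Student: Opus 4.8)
The plan is to run, with only cosmetic modifications, the Moser iteration from the proof of Lemma~\ref{l1}, specialized to the $p$-Laplacian itself: take the vector field $a(x)=|x|^{p-2}x$ (so that $s=0$ and the constant $\a$ in $(\ref{e5})$ depends only on $p$) and the boundary term $\Phi(x,z)=-f(x)|z|^{p-2}z$. Since $|f(x)|\le\l|u(x)|^{p_*-p}$ and $(p_*-p)+(p-1)=p_*-1=q$, the structural conditions $(\ref{e5})$ hold with $s=0$, $\g=0$ and this value of $\l$; by Remark~\ref{r2.2} the conclusion of Lemma~\ref{l1} then collapses to the boundary-free estimate $\|u\|_{L^\infty(B_{\rho/4}(x_0)\cap\O)}\le C\|u\|_{L^t(B_{\rho/2}(x_0)\cap\O)}$, which, after relabelling $\rho\mapsto2\rho$ (shrinking $c_n$ accordingly), is the asserted bound. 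Two points must be accommodated. First, here $u$ is only a \emph{local} solution on $U\cap\O$, not a global $D^{1,p}(\O)$ solution; but every test function used in the iteration is of the form $\eta^p u^\b$ with $\operatorname{supp}\eta\subset B_{2\rho}(x_0)$, and since $x_0\in W\cap\O$ and $2\rho\le 2c_n<2$ all these balls lie well inside $U=\{1<|x|<8\}$, so $\eta^p u^\b\in W^{1,p}$ is compactly supported in $U\cap\ov\O$ and admissible by density (the boundedness of $u$ also lets one dispense with the truncation $\ov u=\min\{u,l\}+k$ and work directly with $k=0$, as in Remark~\ref{r2.2}). Second, the smallness hypothesis is phrased through $\l\|u\|_{L^{p_*}(U\cap\p\O)}^{p(p-1)/(n-p)}$; since $(p_*-p)\frac{n-1}{p-1}=p_*$ we have $\|f\|_{L^{(n-1)/(p-1)}(U\cap\p\O)}\le\l\|u\|_{L^{p_*}(U\cap\p\O)}^{p_*-p}=\l\|u\|_{L^{p_*}(U\cap\p\O)}^{p(p-1)/(n-p)}\le\d$, and this is precisely the quantity that multiplies the Sobolev-trace term in the critical boundary estimate below, so $\d$ can be chosen small depending only on $n,p$.

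Concretely, testing $(\ref{e22})$ against $\eta^p u^\b$ ($\b\ge1$), using $x\cdot a(x)=|x|^p$, Young's inequality on $\int_\O|\n u|^{p-1}|\n\eta|\,\eta^{p-1}u^\b$, and $|f|\le\l u^{p_*-p}$, and absorbing the gradient term, one reaches
\[
\int_\O|\n(\eta u^{(\b+p-1)/p})|^p\,dx\le C\Big(\int_\O u^{\b+p-1}|\n\eta|^p\,dx+\l\b^p\int_{\p\O}\eta^p u^{\b+q}\,d\s\Big).
\]
Writing $\eta^p u^{\b+q}=(\eta u^{(\b+p-1)/p})^p\,u^{p_*-p}$, H\"older on $\p\O$ (conjugate exponents $b$ and $\frac{n-1}{p-1}$) together with the Sobolev trace inequality gives $\int_{\p\O}\eta^p u^{\b+q}\le C_{n,p}\|u\|_{L^{p_*}(\operatorname{supp}\eta\cap\p\O)}^{p_*-p}\int_\O|\n(\eta u^{(\b+p-1)/p})|^p$. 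In a first step one fixes $\b=p_*-p+1$ and uses $\l\|u\|_{L^{p_*}(U\cap\p\O)}^{p_*-p}\le\d$ to absorb this boundary integral; together with the Sobolev and Sobolev-trace inequalities and Observation~\ref{o2} this upgrades the integrability of $u$ to $L^{p_*b}$ on a slightly smaller ball (note $p_*b>p_*$). In a second step one lets $\b\to\infty$: using the $L^{p_*b}$ bound, a H\"older inequality on $\p\O$ with an exponent $t_0<b$, and the interpolation inequality of Observation~\ref{o3} to split
\[
\|\eta^p u^{\b+p-1}\|_{L^{t_0}(\p\O)}\le\e\b^{-p}\|\eta^p u^{\b+p-1}\|_{L^{b}(\p\O)}+C\b^{\s}\|\eta^p u^{\b+p-1}\|_{L^{1}(\p\O)},
\]
the factor $\l\b^p$ is neutralized and, after absorption, one is left with an inequality of the form $\|u^{\b+p-1}\|_{L^{b}}\le C\b^{\tau}\|u^{\b+p-1}\|_{L^{1}}$ on concentric balls. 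Iterating over $\b_k\to\infty$ with geometrically shrinking radii yields $\|u\|_{L^\infty(B_{\rho/2}(x_0)\cap\O)}\le C\|u\|_{L^{p_*}(B_\rho(x_0)\cap\O)}$, and a final application of Observation~\ref{o3} (as in \cite[Remark 4.2]{HL00}) replaces $L^{p_*}$ by $L^t$ for any $t>0$.

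The step I expect to be the main obstacle is this critical-exponent boundary absorption. Because the Neumann nonlinearity on $\p\O$ has the borderline growth $q=p_*-1$, one cannot absorb $\l\b^p\int_{\p\O}\eta^p u^{\b+q}$ into the energy uniformly in $\b$: it is essential first to bootstrap to the supercritical integrability $u\in L^{p_*b}_{\mathrm{loc}}$ using the smallness of $\l\|u\|_{L^{p_*}(U\cap\p\O)}^{p(p-1)/(n-p)}$, and only then to run the genuine iteration, keeping the constants under control via the $\b$-dependent splitting of Observation~\ref{o3}. The remaining ingredients --- the interior energy (Caccioppoli) estimate, the two Sobolev embeddings, and the exponent bookkeeping ($q=p_*-1$, $p_*=pb$, $p^*=pa$, $p_*-p=p(p-1)/(n-p)$) --- are routine and parallel \cite[Theorem 4.1]{HL00} and \cite[Lemma 2.1]{CFR20}.
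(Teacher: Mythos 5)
Your proposal is correct and follows essentially the same route as the paper: test with $\eta^p u^\b$, control the critical boundary term by H\"older in $L^{\frac{n-1}{p-1}}(\p\O)$ against the trace Sobolev norm using the smallness hypothesis, first bootstrap at $\b=p_*-p+1$ to the supercritical integrability $u\in L^{p_*b}$ on the boundary, then run the full iteration with the $\b$-dependent splitting of Observation \ref{o3} and conclude via the classical Moser scheme of \cite[Theorem 4.1, Remark 4.2]{HL00}. The two-step structure you single out as the key point (bootstrap before absorption, because the boundary nonlinearity is exactly critical) is precisely how the paper handles it.
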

\begin{rem}\label{r2.3}
    The proof of Lemma \ref{l2} is similar to the one of Lemma \ref{l1}.
\end{rem}
\begin{proof}
    As in the proof in lemma \ref{l1}, the proof is based on a Moser iteration argument. Let $\eta\in C_c^\infty(U),\,\b\ge 1$ and use $\vp=\eta^p u^\b$ as a test function in $(\ref{e22})$, then we get
\begin{align}
     \b\int_\O |\n u|^p\,\eta^p u^{\b-1} d x=&-p\int_\O |\n u|^{p-2}\n u \cdot\n\eta\,\eta^{p-1} u^\b dx\non\\&+\int_{\p\O} f(x)\eta^p  u^{\b+p-1} d\s\non
     \end{align}
     By H\"{o}lder's inequality, we get
     \begin{align}
     \b\int_\O |\n u|^p\,\eta^p u^{\b-1} d x&\le C_1\bigg(\b^{-(p-1)}\int_\O |\n \eta|^p u^{\b+p-1} dx\non\\&+\int_{\p\O} |f(x)|\eta^p  u^{\b+p-1} d\s\bigg)\non\\
     \RA\int_\O |\n(\eta  u^{\frac{\b+p-1}{p}})|^p d x&\le C_1\bigg(\int_\O |\n \eta|^p u^{\b+p-1} dx\non\\&+\b^p\int_{\p\O} |f(x)|\eta^p  u^{\b+p-1} d\s\bigg)
     \label{e23}
\end{align}
where $C_1$ always depend only on $p$ and $(\ref{e23})$ is similar to $(\ref{e9})$.\\
Note that 
$$
\|f\|_{L^{\frac{n-1}{p-1}}(U\cap \p\O)}\le \l \|u\|_{L^{p_*}(U\cap\p\O)}^\frac{p(p-1)}{n-p}\le \d
$$
So, like $(\ref{e10})$ it follows from H\"{o}lder's inequality and Sobolev trace inequality that
\begin{align}  
\int_{\p\O}|f(x)|\eta^p  u^{\b+p-1} d\s\le
&\|f\|_{L^{\frac{n-1}{p-1}}(U\cap \p\O)}\,\|\eta u^{\frac{\b+p-1}{p}}\|_{L^{p_*}(\p\O)}^p\non\\
\le& C_{n,p} \,\d\int_\O |\n(\eta u^{\frac{\b+p-1}{p}})|^p d x\label{e24}
\end{align}
First, we fix $\b=p_*-p+1$ and choose $\d>0$ small enough which depends only on $n,p$. 
From $(\ref{e23}),(\ref{e24})$, we get
\begin{align}
    \int_\O |\n(\eta \ov u^{\frac{\b+p-1}{p}})|^p d x\le&
    C_2\int_\O |\n \eta|^p u^{\b+p-1} dx\non
\end{align}
where $C_2$ always depends only on $n,p$.\\
Hence, thanks to the Sobolev trace inequality and Observation \ref{o2}, we obtain 
\begin{align}
    \|\eta^p u^{p_*}\|_{L^a(\O)}+\|\eta^p u^{p_*}\|_{L^b(\p\O)}\le
    C_2\int_\O |\n \eta|^p u^{p_*} dx\non
\end{align}
    Here we choose $\eta$ such that $\eta=1$ in $W_1$, $\eta=0$ outside $U$ and $|\n \eta|\le C_n$, then by H\"{o}lder's inequality we have
    \begin{align}
        &\| u\|_{L^{p_* b}(\p\O\cap W_1)}\le C_2\| u\|_{L^{p_*}(\O\cap U)}\le C_2\| u\|_{L^{p^*}(\O\cap U)}\non\\
        \RA \|f&\|_{L^{\frac{n-1}{p-1}b}(W_1\cap \p\O)}\le
        \l \|u\|_{L^{p_* b}(W_1\cap\p\O)}^\frac{p(p-1)}{n-p}\le C_2 \l \|u\|_{L^{p^*}(U\cap\O)}^\frac{p(p-1)}{n-p}\le C_2\label{e25}
    \end{align}
Second, let $\b$ vary and $\operatorname{supp}\eta\subset W_1$, we come back to $(\ref{e23})$. Like $(\ref{e24})$, we have
\begin{align}  
\int_{\p\O}|f(x)|\eta^p  u^{\b+p-1} d\s\le
&\|f\|_{L^{\frac{n-1}{p-1}b}(W_1\cap \p\O)}\,\|\eta^p u^{\b+p-1}\|_{L^{t_0}(\p\O)}\non\\
\le& C_2\|\eta^p u^{\b+p-1}\|_{L^{t_0}(\p\O)}\label{e26}
\end{align}
where $\frac{1}{t_0}+\frac{b-1}{b^2}=1\RA 1<t_0<b=\frac{n-1}{n-p}$.\\
Then from Observation \ref{o3}, Sobolev trace inequality and $(\ref{e26})$, we obtain
\begin{align}  
\int_{\p\O}|f(x)|\eta^p  u^{\b+p-1} d\s
\le& \e \b^{-p} \|\eta^p u^{\b+p-1}\|_{L^{b}(\p\O)}+ C_3\b^{\s}\|\eta^p u^{\b+p-1}\|_{L^{1}(\p\O)}\non\\
\le C_{n,p}\e &\b^{-p} \int_\O |\n(\eta u^{\frac{\b+p-1}{p}})|^p d x+ C_3\b^{\s}\|\eta^p u^{\b+p-1}\|_{L^{1}(\p\O)}
\label{e27}
\end{align}
where $\s=\frac{p(b-t_0)}{b(t_0-1)}=p(b-1)$ and $C_3\sim n,p,\e$.\\
    Then we choose $\e>0$ small enough depending only on $n,p$. From $(\ref{e23}),\,(\ref{e27})$, we get
\begin{align}
\int_\O |\n(\eta  u^{\frac{\b+p-1}{p}})|^p d x&\le C_2\bigg(\int_\O |\n \eta|^p u^{\b+p-1} d x\non\\&+\b^\s\int_{\p\O} \eta^p  u^{\b+p-1} d\s\bigg)
     \label{e28}
\end{align}
Also, thanks to the Sobolev trace inequality and Observation \ref{o2}, we get
   \begin{align}
    \|\eta^p u^{\b+p-1}\|_{L^a(\O)}+\|\eta^p u^{\b+p-1}\|_{L^b(\p\O)}&\le C_2\bigg(\int_\O |\n \eta|^p u^{\b+p-1} d x\non\\&+\b^\s\int_{\p\O} \eta^p  u^{\b+p-1} d\s\bigg)
     \label{e29}
    \end{align}
Since $\rho\le c_n,\,\T Vol (B_{c_n})\le 1,\,\T Area (B_{c_n}\cap \p\O)\le 1$, here we choose $\eta$ such that $\eta=1$ in $B_{r}(x_0)$, $\eta=0$ outside $B_{R}(x_0)$ and $|\n \eta|\le C_n/(R-r)$, where $0<r<R<\rho$. Then by H\"{o}lder's inequality we have
\begin{align}
    \|u^{\b+p-1}\|_{L^b(\O\cap B_r(x_0))}+\|u^{\b+p-1}&\|_{L^b(\p\O\cap B_r(x_0))}\le
    \non\\C_2\bigg(\frac{1}{(R-r)^p}\| u^{\b+p-1}\|_{L^1(\O\cap B_R(x_0))}
+&\b^\s\| u^{\b+p-1}\|_{L^1(\p\O\cap B_R(x_0))}\bigg)\label{e30}
\end{align}
where $\s=p(b-1)$.\\
Hence, as in the proof of \cite[Theorem 4.1,Remark 4.2]{HL00}, a classical Moser iteration argument yields the result.
   
\end{proof}

\begin{rem}\label{r2.4}
    In our setting, $\l=R^{-1}$, $f(x)=V(x)$. Note that 
   \begin{align*} 
    R^{-\frac{n-p}{p(p-1)}}\|u_R\|_{L^{p_*}(U\cap\p\O)}=\|u\|_{L^{p_*}(\p\O\cap\{R<|x|<8 R\})}\ra 0\,\,\T{as}\,\,R\ra \infty,
   \end{align*}
   and
   \begin{align*} 
    R^{-\frac{n-p}{p(p-1)}}\|u_R\|_{L^{p^*}(U\cap\O)}=\|u\|_{L^{p^*}(\O\cap\{R<|x|<8 R\})}\ra 0\,\,\T{as}\,\,R\ra \infty.
   \end{align*}
    Then, by Lemma \ref{l2} we get there exists $R_2>1$ such that
    $$
    \|u_R\|_{L^\infty(W\cap \O)}\le C\|u_R\|_{L^{p^*}(W_1\cap \O)}\le C R^{\frac{n-p}{p(p-1)}}\,\,
    ,\T{for}\,\,R\ge R_2
    $$
Hence, by elliptic regularity theory for p-Laplacian type equations \cite{L88,L91}, for any $G\Subset W$, we obtain $\|u_R\|_{C^{1,\a}(G\cap \O)}\le C R^{\frac{n-p}{p(p-1)}}$. Therefore, we have
$$
|x|^\frac{n-p}{p}u(x)+|x|^\frac{n}{p}|\n u(x)|\le C\,\,\T{in}\,\,\O\cap \{|x|>R_2\}
$$

\end{rem}

It's known that for a n-dimensional set $\O\subset\rz^n$, $t>0$ and $v\in L^t(\O)$, we have 
\begin{equation}\label{tool1}
    \int_{\O} |v|^t d x=t\int_0^\infty s^{t-1}\H^{n}(\O\cap\{|v|>s\}) d s
\end{equation}
Recall that, for a n-dimensional set $\O\subset\rz^n$ and $r>0$, one defines the space $L^{r,\infty}(\O)$ as the set of all measurable functions $v:\O\ra\rz$ such that
$$
\|v\|_{L^{r,\infty}(\O)}:=\operatorname*{sup}_{h>0}\,\{h \,\H^n(\{|v|>h\})^\frac{1}{r}\}<\infty
$$
where $\H^k$ is k-dimensional hausdorff measure. 

\begin{lem}\label{l3}
    Every solution of $(\ref{e1})$ belongs to $L^{(p-1)a,\infty}(\O)$ and $L^{(p-1)b,\infty}(\p\O)$.
\end{lem}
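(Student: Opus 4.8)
The plan is to deduce the weak-Lebesgue membership directly from the pointwise decay estimates already established. Recall from Remark~\ref{r2.4} (together with Observation~\ref{o1} and Lemma~\ref{l1}) that $u\in L^\infty(\O)$ and that there is $R_2>1$ with $u(x)\le C|x|^{-\frac{n-p}{p}}$ in $\O\cap\{|x|>R_2\}$. Wait—this is the \emph{upper} bound on the \emph{rescaled} quantity; translated back it reads $|x|^{\frac{n-p}{p}}u(x)\le C$ for $|x|>R_2$, i.e. $u(x)\le C|x|^{-\frac{n-p}{p}}$ at infinity. Since $\frac{n-p}{p}\cdot\frac{np}{n-p}\cdot\frac1n = 1$... let me instead phrase it cleanly: I claim $u(x)\le C(1+|x|)^{-\frac{n-p}{p}}$ on all of $\O$, combining the $L^\infty$ bound near the origin with the decay at infinity.

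First I would record the elementary fact that a function $w$ on an $n$-dimensional set $\O$ with $w(x)\le C(1+|x|)^{-\beta}$ lies in $L^{r,\infty}(\O)$ precisely when $\beta r\ge n$, because $\{w>h\}\subset\{|x|\le (C/h)^{1/\beta}\}$ has $\H^n$-measure $\lesssim h^{-n/\beta}$, so $h\,\H^n(\{w>h\})^{1/r}\lesssim h^{1-\frac{n}{\beta r}}$, which is bounded for small $h$ iff $\beta r\ge n$ (and for large $h$ the $L^\infty$ bound handles it). Applying this with $\beta=\frac{n-p}{p}$ and $r=(p-1)a=\frac{n(p-1)}{n-p}$ gives $\beta r = \frac{n-p}{p}\cdot\frac{n(p-1)}{n-p}=\frac{n(p-1)}{p}$, hmm, that is $<n$. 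So the naive estimate on $\O$ is \emph{not} quite enough; the correct exponent pairing must come from the boundary/trace scaling. Let me reconsider: on $\p\O$, which is $(n-1)$-dimensional, the analogous computation with $\beta=\frac{n-p}{p}$ and $r=(p-1)b=\frac{(n-1)(p-1)}{n-p}$ gives $\beta r=\frac{(n-1)(p-1)}{p}<n-1$. So in fact the pointwise bound with exponent $\frac{n-p}{p}$ is exactly \emph{too weak}, and what is actually needed is the sharp decay $u(x)\le C(1+|x|)^{-\frac{n-p}{p-1}}$ from Proposition~\ref{p1}, which however is proved \emph{using} this lemma — a circularity one must avoid.

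So the real structure of the argument must be: use only the \emph{preliminary} decay $u(x)\le C(1+|x|)^{-\frac{n-p}{p}}$ available at this point, and extract the weak-$L^{(p-1)a}$ and weak-$L^{(p-1)b}$ bounds not from that pointwise bound alone but by a \emph{Lorentz-space Sobolev trace embedding} argument: since $\nabla u\in L^p(\O)$, one has $u\in L^{p^*,p}(\O)\hookrightarrow L^{p^*,\infty}(\O)$ and the trace lies in $L^{p_*,p}(\p\O)\hookrightarrow L^{p_*,\infty}(\p\O)$ by the Lorentz-refined Sobolev trace inequality; then bootstrap \emph{once} through the equation. Concretely, I would test as in Lemma~\ref{l1}/Remark~\ref{r2.2} with power nonlinearities but track weak-Lebesgue norms via \eqref{tool1}: writing the integral of $u^t$ over super-level sets and using the boundary condition $|Du|^{p-2}u_t=-u^q$ to convert a bulk gradient integral into a boundary integral of $u^q=u^{p_*-1}$, the improvement in integrability from $L^{p^*}$ to $L^{(p-1)a}=L^{\frac{n(p-1)}{n-p}}$ (note $(p-1)a<p^*$, so this is a genuine gain in decay of the distribution function at infinity coming from the decay bound, used only to make integrals finite, not to get the exponent) falls out. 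The cleanest route: combine the global bound $u\le C(1+|x|)^{-\frac{n-p}{p}}$ — which gives $u^{p^*}\in L^1$ with a specific decay rate — with interpolation against $u\in L^\infty$ to conclude $u\in L^{s}$ for every $s>p^*$ locally-uniformly-at-infinity, hence $\H^n(\{u>h\})\lesssim h^{-(p-1)a}$ for $h$ small, i.e. $u\in L^{(p-1)a,\infty}(\O)$; the trace statement is identical with $\H^{n-1}$ and the trace decay. The main obstacle, and the step I would spend the most care on, is verifying that the available bound $u(x)\lesssim (1+|x|)^{-(n-p)/p}$ — established \emph{before} Proposition~\ref{p1} — already yields decay of the distribution functions at the exponents $(p-1)a$ and $(p-1)b$; this requires checking that $\int_{\{|x|>R\}}u^{(p-1)a}\,dx<\infty$, equivalently $\frac{n-p}{p}\cdot(p-1)a>n$, i.e. $(p-1)a>p^*\cdot\frac{p-1}{?}$ — one must confirm $(p-1)a = \frac{n(p-1)}{n-p}$ satisfies $\frac{n-p}{p}(p-1)a>n \iff (p-1)a>\frac{np}{n-p}=p^*$, which holds iff $p-1>p$, false. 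Hence the pointwise bound alone is insufficient and one genuinely needs the super-level-set testing scheme of Lemma~\ref{l1} adapted to produce weak-type rather than strong-type estimates, exactly as in the capacity/truncation argument of \cite{Veto16} and \cite{CFR20}; porting that truncation argument to the half-space with the boundary nonlinearity is the crux, and I would model it on the bulk case there, replacing Sobolev by Sobolev trace throughout.
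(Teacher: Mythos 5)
Your proposal does not actually contain a proof: after correctly computing that the preliminary decay $u(x)\lesssim(1+|x|)^{-\frac{n-p}{p}}$ is too weak to give the exponents $(p-1)a$ and $(p-1)b$ (this diagnosis is right, and it is good that you caught the circularity with Proposition \ref{p1}), you fall back on two suggestions that do not work and then end by naming the correct method without executing it. The Lorentz refinement $\n u\in L^p\Rightarrow u\in L^{p^*,p}\hookrightarrow L^{p^*,\infty}$ only controls the distribution function at the rate $h^{-p^*}$, and since $(p-1)a<p^*$ the claim $\H^n(\{u>h\})\lesssim h^{-(p-1)a}$ for small $h$ is strictly \emph{stronger} than that; likewise interpolation of the pointwise bound against $L^\infty$ can never produce it. So the entire content of the lemma is concentrated in what you defer to ``the truncation argument of \cite{Veto16}'', and that is precisely what the paper's proof consists of.

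Concretely, the missing argument is: test \eqref{er1} with $\vp=u_h:=\min\{u,h\}$, so that $\int_\O|\n u_h|^p=\int_{\p\O\cap\{u\le h\}}u^{p_*}+h\int_{\p\O\cap\{u>h\}}u^q$; apply the trace inequality to the left side, rewrite both boundary terms by the layer-cake formula \eqref{tool1}, and observe that the term $\int_{\p\O}u_h^{p_*}$ is absorbed as $h\ra0$. Setting $G(h)=\int_h^\infty s^{q-1}\H^{n-1}(\{u>s\}\cap\p\O)\,ds$, one obtains $h^{(p-1)b}\H^{n-1}(\{u>h\}\cap\p\O)\le C\,G(h)^b$ together with the differential inequality $(G^{1-b})'\le Ch^{b-2}$; integrating and using $hG(h)\ra0$ (dominated convergence) gives $G(0)<\infty$, hence the weak-$L^{(p-1)b}$ bound on $\p\O$. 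The interior bound is \emph{not} ``identical with $\H^{n-1}$ replaced by $\H^n$'': it requires a second test with $\eta^pu_h$ for a cutoff $\eta$ at scale $R$, where the commutator term $h\int_{\O\cap\{u>h\}}|\n u|^{p-1}|\n\eta|\eta^{p-1}$ must be shown to vanish as $R\ra\infty$ — and this is exactly where the preliminary decay $|x|^{n/p}|\n u|\le C$ of Remark \ref{r2.4} enters (combined with $\H^n(\{u>h\})<\infty$), rather than in the pointwise way you attempted. Without these steps the lemma is not proved.
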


\begin{proof}
    Adapting the proof in \cite[lem2.2]{Veto16}, we obtain this lemma.\\    
\textbf{Step 1:}\textit{ Let $u$ be a solution of $(\ref{e1})$, then $u\in L^{(p-1)b,\infty}(\p\O)$.}\\
For every $h>0$, we define $u_h=\min\{u,h\}$, $W_h=\{u>h\}$, then 
$$u_h\in D^{1,p}(\O)\,\,\,\,\T{and}\,\,\,\,\n u_h=\begin{cases}
    \n u & \T{in} \,\,W_h^c\\
    0 & \T{in} \,\,W_h
\end{cases}$$
By Remark \ref{r2.1}, we use $\vp=u_h$ as a test function in $(\ref{e1})$, then we get
\begin{align}
    \int_{\O} |\n u_h|^p d x=&\int_{\p\O} u^q u_h d\s\non\\
    =&\int_{\p\O\cap W_h^c} u^{p_*} d\s+h\, \int_{\p\O\cap W_h} u^q d\s\label{e33}
\end{align}
Thanks to the Observation \ref{o1} and Observation \ref{o2}, we have 
\begin{align}
\bigg(\int_{\O}u_h^{p^*} d x\bigg)^\frac{1}{a}+&\bigg(\int_{\p\O}u_h^{p_*} d \s\bigg)^\frac{1}{b}\le C_1 \int_{\O} |\n u_h|^p d x\non\\
&\le C_1\bigg(\int_{\p\O\cap W_h^c} u^{p_*} d\s+h\, \int_{\p\O\cap W_h} u^q d\s\bigg)\label{e33.5}
\end{align}
where $C_1$ always depend only on $n,p$.\\
On the one hand, straightforward computations give
\begin{align}\label{e34}
    \int_{\p\O\cap W_h^c} u^{p_*} d\s=\int_{\p\O}u_h^{p_*} d \s-h^{p_*}\,\H^{n-1}(W_h\cap\p\O)
\end{align}
On the other hand, using $(\ref{tool1})$ we get 
\begin{align}
    \int_{\p\O\cap W_h} u^q d\s=&q\int_0^\infty s^{q-1}\H^{n-1}(\{u>h\}\cap\{u>s\}\cap\p\O)\, d s\non\\
    =&h^q \H^{n-1}(W_h\cap\p\O)+q\int_h^\infty s^{q-1}\H^{n-1}(\{u>s\}\cap\p\O)\, d s\label{e35}
\end{align}
It follows from $(\ref{e33.5})$, $(\ref{e34}),\,(\ref{e35})$ and $q=p_*-1$ that
\begin{align}
    \bigg(\int_{\p\O}u_h^{p_*} d \s\bigg)^\frac{1}{b}\le C_1\bigg(
    \int_{\p\O}u_h^{p_*} d \s+q h\int_h^\infty s^{q-1}\H^{n-1}(\{u>s\}\cap\p\O)\, d s
    \bigg)\label{e36}
\end{align}
Since $|u_h|\le |u|,\,u\in L^{p_*}(\p\O),\,u_h\ra 0$ as $h\ra 0$, by dominated convergence theorem, we get
\begin{equation}\label{e37}
\int_{\p\O}u_h^{p_*} d \s\ra0\,\,\,\T{as}\,\,\,h\ra 0
\end{equation}
It follows from $(\ref{e34}),\,(\ref{e36})$ and $(\ref{e37})$ that for small $h>0$, we have 
\begin{align}
    h^{p_*}\,\H^{n-1}&(W_h\cap\p\O)\le \int_{\p\O}u_h^{p_*} d \s\le C_1\bigg(
    h\int_h^\infty s^{q-1}\H^{n-1}(\{u>s\}\cap\p\O)\, d s
    \bigg)^b\non\\
    \RA &h^{(p-1)b}\,\H^{n-1}(W_h\cap\p\O)\le C_1\bigg(
    \int_h^\infty s^{q-1}\H^{n-1}(\{u>s\}\cap\p\O)\, d s
    \bigg)^b\label{e38}
\end{align}
We now define 
$$
G(h):=\int_h^\infty s^{q-1}\H^{n-1}(\{u>s\}\cap\p\O)\, d s
$$
Then we have $G'(h)=-h^{q-1}\,\H^{n-1}(W_h\cap\p\O)\le 0$ when $h>0$, which implies $G(0):=\D\operatorname*{lim}_{h\ra 0^+}G(h)$ exists (but may be infinity). In the following, we will show that $G(0)<\infty$. \\
By using $(\ref{e38})$, we obtain
\begin{align}
    (G(h)^{1-b})'=&(b-1)G(h)^{-b} h^{q-1}\,\H^{n-1}(W_h\cap\p\O)\non\\
    \le& C_1 h^{b-2}\,\quad\quad\T{ for small}\,\,\,h>0.  \label{e39}
\end{align}
By integrating $(\ref{e39})$, we obtain
\begin{align}
    G(h)^{1-b}-G(0)^{1-b}
    \le C_1 h^{b-1}\,\quad\quad\T{ for small}\,\,\,h>0.  \label{e40}
\end{align}
Note that \begin{equation*}
    h\,G(h)=\int_0^\infty h s^{q-1} \H^{n-1}(\{u>s\}\cap\p\O)\,\chi_{\{s>h\}}\, d s.
\end{equation*}
Since $|h\,s^{q-1}\H^{n-1}(\{u>s\}\cap\p\O)\,\chi_{\{s>h\}}|\le s^{q}\,\H^{n-1}(\{u>s\}\cap\p\O)\in L^1(\rz_+)$ and $|h\,s^{q-1}\H^{n-1}(\{u>s\}\cap\p\O)\,\chi_{\{s>h\}}|\ra 0$ as $h\ra 0$, by dominated convergence theorem we get
\begin{equation}\label{e41}
h\,G(h)\ra0\,\,\,\T{as}\,\,\,h\ra 0.
\end{equation}
It follows from $(\ref{e40})$ and $(\ref{e41})$ that $G(0)<\infty$. By using $(\ref{e38})$ and $G$ is non-increasing, for small $h>0$ we get
\begin{align}
h^{(p-1)b}\,\H^{n-1}(W_h\cap\p\O)\le C_1 G(0)^b<\infty.\non
\end{align}
    As for $h>1$, it follows from $(\ref{e34})$ that
\begin{align}
&h^{(p-1)b}\,\H^{n-1}(W_h\cap\p\O)\le h^{p_*}\,\H^{n-1}(W_h\cap\p\O)\non\\
&\le \int_{\p\O}u_h^{p_*} d \s
\le \int_{\p\O}u^{p_*} d \s
<\infty.\non
\end{align}
In conclusion, we obtain $u\in L^{(p-1)b,\infty}(\p\O)$.\\
\textbf{Step 2:}\textit{ Let $u$ be a solution of $(\ref{e1})$, 
then $u\in L^{(p-1)a,\infty}(\O)$.}\\

Come back to (\ref{e33.5}), we have
\begin{align}
    \bigg(\int_{\O}u_h^{p^*} d x\bigg)^\frac{1}{a}\le C_1\bigg(
    \int_{\p\O}u_h^{p_*} d \s+q h\, G(h)\bigg)\non
\end{align}
Note that from (\ref{e38}) and step 1, for $h>0$ small we get
$$\int_{\p\O}u_h^{p_*} d \s\le C_1 (h G(h))^b \le C h^b \quad\T{and}\quad h G(h)\le C h. $$
Therefore, for $h>0$ small we have 
\begin{align}
   h^{p^*}\,\H^{n}(W_h\cap\O)\le\int_{\O}u_h^{p^*} d x\le C_1 h^a \label{e50}
\end{align}
It follows from $(\ref{e50})$ that
\begin{align}
    h^{(p-1)a}\,\H^{n}(W_h\cap\O)\le C_2 \quad\,\T{ for small}\,\,\,h>0.\label{e50.1}
\end{align}
 Similar to $(\ref{e34})$, for each $h>0$ we have
    \begin{align}
    h^{p^*}\,\H^{n}(W_h\cap\O)\le\int_{\O}u_h^{p^*} d x<\infty,\non
\end{align}
which implies that for $h>1$, we have
\begin{align}
    h^{(p-1)a}\,\H^{n}(W_h\cap\O)\le h^{p^*}\,\H^{n}(W_h\cap\O)<\infty\label{e50.5}
\end{align}
Together (\ref{e50.1}) and (\ref{e50.5}), we get $u\in L^{(p-1)a,\infty}(\O)$.

\end{proof}

\begin{clm}\label{c1}
    Suppose $\O\subset\rz^n$ is a bounded open set and $0<t<x$, then for every $v\in L^{x,\infty}(\O)$, we have
     $$
     \|v\|_{L^t(\O)}\le C\H^n(\O)^\tau\,\|v\|_{L^{x,\infty}(\O)} 
     $$
     where $\tau=\frac{1}{t}-\frac{1}{x}$ and $C$ depends only on $x,t$.
\end{clm}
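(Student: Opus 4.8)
The plan is to prove this as a standard consequence of the layer-cake formula $(\ref{tool1})$ combined with the weak-$L^x$ bound, splitting the level-set integral at a threshold that will be optimized at the end. Concretely, for $v\in L^{x,\infty}(\O)$ write, using $(\ref{tool1})$ with exponent $t$,
\begin{align*}
\int_\O |v|^t\,dx = t\int_0^\infty s^{t-1}\,\H^n(\O\cap\{|v|>s\})\,ds = t\int_0^M + t\int_M^\infty,
\end{align*}
where $M>0$ is a free parameter. On the first piece I bound $\H^n(\O\cap\{|v|>s\})\le\H^n(\O)$ trivially, so $t\int_0^M s^{t-1}\H^n(\O)\,ds=M^t\,\H^n(\O)$. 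On the second piece I use the definition of the weak norm: $\H^n(\{|v|>s\})\le \|v\|_{L^{x,\infty}(\O)}^x\,s^{-x}$, which is integrable against $s^{t-1}$ near infinity precisely because $t<x$, giving $t\int_M^\infty s^{t-1-x}\,ds\cdot\|v\|_{L^{x,\infty}}^x=\frac{t}{x-t}M^{t-x}\|v\|_{L^{x,\infty}}^x$.

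Next I add the two bounds: $\int_\O|v|^t\,dx\le M^t\,\H^n(\O)+\frac{t}{x-t}M^{t-x}\,\|v\|_{L^{x,\infty}(\O)}^x$. This is a function of $M$ of the form $AM^t+BM^{t-x}$ with $A,B>0$; since $t>0$ and $t-x<0$ it tends to $+\infty$ at both ends and has a unique interior minimum. Rather than compute the exact minimizer I would just choose $M=\|v\|_{L^{x,\infty}(\O)}\,\H^n(\O)^{-1/x}$ (the scale that balances the two terms up to constants): then $M^t\H^n(\O)=\|v\|_{L^{x,\infty}}^t\,\H^n(\O)^{1-t/x}$ and $M^{t-x}\|v\|_{L^{x,\infty}}^x=\|v\|_{L^{x,\infty}}^{t-x}\H^n(\O)^{(x-t)/x}\|v\|_{L^{x,\infty}}^x=\|v\|_{L^{x,\infty}}^{t}\,\H^n(\O)^{1-t/x}$, so both terms equal $\|v\|_{L^{x,\infty}(\O)}^t\,\H^n(\O)^{1-t/x}$. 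Hence
\begin{align*}
\int_\O|v|^t\,dx\le \Big(1+\tfrac{t}{x-t}\Big)\,\H^n(\O)^{1-\frac{t}{x}}\,\|v\|_{L^{x,\infty}(\O)}^t,
\end{align*}
and taking $t$-th roots gives $\|v\|_{L^t(\O)}\le C\,\H^n(\O)^{\frac1t-\frac1x}\,\|v\|_{L^{x,\infty}(\O)}$ with $C=(1+\frac{t}{x-t})^{1/t}$ depending only on $x,t$, which is exactly the claim with $\tau=\frac1t-\frac1x$.

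There is no real obstacle here; the only points requiring a word of care are that the weak norm may be zero (in which case $v=0$ a.e.\ and the inequality is trivial, so one may assume $\|v\|_{L^{x,\infty}(\O)}>0$ and the choice of $M$ makes sense), that $\H^n(\O)<\infty$ since $\O$ is bounded, and that the splitting point $M$ is legitimately admissible. I would also remark that the hypothesis $t>1$ is not actually used in the argument — only $0<t<x$ and finiteness of $\H^n(\O)$ matter — but since the claim is only applied with $t>1$ later this is harmless. The essential input is simply that $x>t$, which is what makes $\int_M^\infty s^{t-1-x}\,ds$ converge.
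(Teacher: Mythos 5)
Your proof is correct and is essentially identical to the paper's: both use the layer-cake formula, split the level-set integral at the threshold $y=\|v\|_{L^{x,\infty}(\O)}\H^n(\O)^{-1/x}$, bound the low part by $y^t\H^n(\O)$ and the tail by the weak-norm decay, and balance the two terms. (The paper's displayed constant $\frac{t}{t-x}$ is a sign typo for $\frac{t}{x-t}$; your version has it right.)
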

\begin{proof}
    Using $(\ref{tool1})$, we have
    \begin{align*}
      \|v\|_{L^t(\O)}^t=\int_0^\infty t s^{t-1}\H^{n}(\O\cap\{|v|>s\}) d s
    \end{align*}
    Note that for $y>0$
    \begin{align*}
        \int_0^y t s^{t-1}\H^{n}(\O\cap\{|v|&>s\}) d s\le y^t\,\H^{n}(\O)\\
        \int_y^\infty t s^{t-1}\H^{n}(\O\cap\{|v|>s\}) d s&\le t\int_y^\infty s^{t-x-1}\|v\|_{L^{x,\infty}(\O)}^x d s\\
        &\le \frac{t}{t-x}y^{t-x}\|v\|_{L^{x,\infty}(\O)}^x
    \end{align*}
We choose $$y=\frac{\|v\|_{L^{x,\infty}(\O)}}{(\H^{n}(\O)^\frac{1}{x})}$$
Then the claim \ref{c1} follows.
    
\end{proof}

\begin{proof}[Proof of Proposition \ref{p1} (\ref{e17.1})]
    Note that
    \begin{align}
        \|u_R\|_{L^{(p-1)a,\infty}(\O)}^{(p-1)a}=&\sup_{h>0} h^{(p-1)a}\H^n(\O\cap \{x:R^\frac{n-p}{p-1} u(R x)>h\})\non\\
        =&\sup_{h>0} h^{(p-1)a}R^{-n}\H^n(\O\cap \{y:u(y)>h R^{-\frac{n-p}{p-1}} \})\non\\
        =&\sup_{d>0} d^{(p-1)a}\H^n(\O\cap \{x:u(y)>d\})\non\\
        =&\|u\|_{L^{(p-1)a,\infty}(\O)}^{(p-1)a}\label{e51}
    \end{align}
    Also, we have
    \begin{align}
        \|u_R\|_{L^{(p-1)b,\infty}(\p\O)}^{(p-1)b}=&\sup_{h>0} h^{(p-1)b}\H^{n-1}(\p\O\cap \{x:R^\frac{n-p}{p-1} u(R x)>h\})\non\\
        =&\sup_{h>0} h^{(p-1)a}R^{-(n-1)}\H^{n-1}(\p\O\cap \{y:u(y)>h R^{-\frac{n-p}{p-1}} \})\non\\
        =&\sup_{d>0} d^{(p-1)b}\H^{n-1}(\p\O\cap \{x:u(y)>d\})\non\\
        =&\|u\|_{L^{(p-1)b,\infty}(\p\O)}^{(p-1)b}\label{e52}
    \end{align}
Hence, $\|u_R\|_{L^{(p-1)b,\infty}(\p\O)}+\|u_R\|_{L^{(p-1)a,\infty}(\O)}<\infty$.\\
Note that
\begin{align*}
    &R^{-\frac{n-p}{p(p-1)}}\|u_R\|_{L^{p_*}(U\cap\p\O)}=\|u\|_{L^{p_*}(\p\O\cap\{R<|x|<8 R\})}\ra 0\,\,\T{as}\,\,R\ra \infty,\\
    &R^{-\frac{n-p}{p(p-1)}}\|u_R\|_{L^{p^*}(U\cap\O)}=\|u\|_{L^{p^*}(\O\cap\{R<|x|<8 R\})}\ra 0\,\,\T{as}\,\,R\ra \infty.
\end{align*}
Using Lemma \ref{l2} with $\l=R^{-1}$, $f(x)=V(x)$ and some $t\in(0,(p-1)a)$, we get that there exists $R_2>1$ such that
\begin{equation}\label{e53}
    \|u_R\|_{L^\infty(W\cap \O)}\le C\|u_R\|_{L^t(W_1\cap \O)}\,\,
    ,\T{for}\,\,R\ge R_2.
\end{equation}
Using claim \ref{c1}, $(\ref{e51})$ and $(\ref{e53})$, we obtain
\begin{equation}\label{e54}
    \|u_R\|_{L^\infty(W\cap \O)}\le C\|u_R\|_{L^{(p-1)a,\infty}(W_1\cap \O)}\le C\,\,
    ,\T{for}\,\,R\ge R_2.
\end{equation}
Hence, by elliptic regularity theory for p-Laplacian type equations \cite{L88,L91}, we obtain
$\|u_R\|_{C^{1,\a}(\O\cap \{4\le|x|\le 5\})}\le C$ for $R\ge R_2$, which yields $(\ref{e17.1})$.
    
\end{proof}

\subsubsection{Asymptotic lower bounds on $u$}
Adapting the proof in \cite{Veto16}, we shall prove $(\ref{e17.2})$ in this subsection.\\
Refer to \cite[Section 4.4]{HL00}, we see that weak Harnack inequality follows from local boundedness and John-Nirenberg lemma, and we have already shown local boundedness (that is lemma \ref{l2}).  
\begin{lem}\label{l4}
	Suppose $u\in W^{1,p}\cap L^\infty(U\cap \O )$ is a positive solution to
    \begin{equation}\label{e56}
    \left\{
    \begin{array}{lr}
	\Delta_p u=0 & ,\T{in} \,\,\O\cap U\\
	|D u|^{p-2}\frac{\partial u}{\partial x_n}=-f(x) u^{p-1} & ,\T{on} \,\,\p\O\cap U
	\end{array}
 \right.     
\end{equation}
where $f\in L^{\infty}(U\cap\p\O)$, $|f(x)|\le \l$ in $U\cap\p\O$ and $\l\ge 0$.\\
Then there exist $\d,p_1>0$ with the following property: let $c_n\in(0,1),\l$ be such that 
\begin{align*}
    \T Vol (B_{c_n})\le 1,\,\,\T Area (B_{c_n}\cap \p\O)\le 1,\,\,\l\le\d,
\end{align*}
   then for any $0<\rho\le c_n$, $x_0\in\O\cap W$, we have 
   $$\inf_{B_{\rho/2}(x_0)\cap \O} u\ge c\|u\|_{L^{p_1}(B_{\rho}(x_0)\cap \O)},$$
   where $\d,\,p_1$ depends only on $n,p$; and $c$ depends only on $n,p,\rho$.
\end{lem}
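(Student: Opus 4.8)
The plan is to adapt the classical Serrin--Moser derivation of the weak Harnack inequality, in the form recalled in \cite[Section~4.4]{HL00}: combine a reverse Moser iteration for negative powers of $u$ (run exactly as the local boundedness Lemma \ref{l2}) with a $BMO$ estimate for $\log u$ and the John--Nirenberg lemma --- the two ingredients mentioned just before the statement.

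\medskip

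\emph{Local boundedness of $u^{-1}$.} First I would show that there is $p_0>0$ (any small one) with $\sup_{B_{\rho/2}(x_0)\cap\O}u^{-1}\le C\,\|u^{-1}\|_{L^{p_0}(B_{3\rho/4}(x_0)\cap\O)}$, $C=C(n,p,\rho)$. Since $x_0\in W$ and $\rho\le c_n<1$ we have $B_\rho(x_0)\subset W_1\subset U$; moreover, by the regularity cited for $(\ref{e1})$--type equations, $u$ is continuous and positive up to $\p\O\cap U$, hence bounded below by a positive constant on $\ov{B_\rho(x_0)}\cap\ov\O$, so $\vp=\eta^p u^{\b}$ with $\eta\in C_c^\infty(U)$ and $\b<1-p$ is an admissible test function in $(\ref{e56})$ (if one prefers to avoid this, replace $u$ by $u+\e$, whose boundary datum is $-\tl f\,(u+\e)^{p-1}$ with $\tl f=f\,u^{p-1}(u+\e)^{1-p}$, $|\tl f|\le\l$, and let $\e\to0$ at the end). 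Inserting $\vp$, absorbing the term carrying $\n\eta\cdot\n u$ by Young's inequality, and noting that $|f|\le\l$, $\l\le\d$ and $\rho\le1$ make $\|f\|_{L^{\frac{n-1}{p-1}}(B_\rho(x_0)\cap\p\O)}\le C\l\rho^{p-1}$ as small as we wish, one argues exactly as in the proof of Lemma \ref{l2}: a first step with a fixed exponent absorbs, via the Sobolev trace inequality, the boundary term and upgrades $f$ to a bound in a slightly better Lebesgue space on $W_1\cap\p\O$; for the remaining exponents the boundary term is controlled by interpolating an intermediate $L^{t_0}$-norm ($1<t_0<b$) through Observation \ref{o3} into a piece absorbed with the factor $\e\b^{-p}$ and a lower order $L^1$-piece carrying a power of $\b$. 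The usual Moser iteration \cite[Theorem~4.1, Remark~4.2]{HL00} then gives the estimate, and $\e\to0$ transfers it to $u^{-1}$.

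\medskip

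\emph{$BMO$ estimate and conclusion.} Next I would test $(\ref{e56})$ with $\vp=\eta^p u^{1-p}$ (admissible for the same reason, or via $u+\e$): the boundary term collapses to $-\int_{\p\O}f\,\eta^p\,d\s$, and Young's inequality leaves $\int_\O|\n\log u|^p\eta^p\,dx\le C\big(\int_\O|\n\eta|^p\,dx+\l\int_{\p\O}\eta^p\,d\s\big)$. With $\eta$ a standard cut-off of a ball $B_r(z)\subseteq B_\rho(x_0)$ and using $\rho\le1$, $\l\le\d$, this yields $\int_{B_r(z)\cap\O}|\n\log u|^p\,dx\le C\,r^{\,n-p}$ for every such ball, whether $z\in\O$ or $z\in\p\O$. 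Combined with the Poincar\'e inequality on the half-balls $B_r(z)\cap\O$ --- which, $\p\O$ being a hyperplane, are Lipschitz domains with constants independent of $r$ and of the centre --- this bounds the $L^p$-mean oscillation of $\log u$ over all such balls, i.e.\ gives the Campanato characterisation of $BMO(B_\rho(x_0)\cap\O)$, with norm depending only on $n,p$. The John--Nirenberg lemma on $D:=B_\rho(x_0)\cap\O$ --- applicable there because even reflection across $\p\O$ turns $\log u$ into a $BMO$ function on a genuine ball --- then yields $p_1=p_1(n,p)>0$ (we may shrink it) and $C$ with $\big(\frac{1}{\H^n(D)}\int_D u^{p_1}\big)\big(\frac{1}{\H^n(D)}\int_D u^{-p_1}\big)\le C$. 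Taking $p_0=p_1$ above and using $\int_{B_{3\rho/4}(x_0)\cap\O}u^{-p_1}\le\int_D u^{-p_1}$, we obtain
\[
\inf_{B_{\rho/2}(x_0)\cap\O}u=\Big(\sup_{B_{\rho/2}(x_0)\cap\O}u^{-1}\Big)^{-1}\ge c\Big(\int_D u^{-p_1}\Big)^{-1/p_1}\ge c'\Big(\int_D u^{p_1}\Big)^{1/p_1}=c'\,\|u\|_{L^{p_1}(B_\rho(x_0)\cap\O)},
\]
with $c,c'$ depending only on $n,p,\rho$, which is the claim.

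\medskip

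I expect the main obstacle to be the boundary side of the $BMO$ step: one needs the Poincar\'e inequality on the half-balls $B_r(z)\cap\O$ with constants uniform in $r$ and in the centre (including $z\in\p\O$) and a John--Nirenberg lemma valid on $B_\rho(x_0)\cap\O$; both hold because $\p\O$ is flat --- rescaling reduces to a fixed half-ball and even reflection across $\p\O$ preserves $BMO$ --- but verifying them cleanly, along with the regularization making $\log u$ and the negative powers of $u$ legitimate test functions and the uniform absorption of the boundary term throughout the iteration, are the points that demand the most care.
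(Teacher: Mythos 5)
Your proposal is correct and follows essentially the same route as the paper's proof: a reverse Moser iteration giving local boundedness of $u^{-1}$ (the paper phrases this via $w=u^{-1}$ and the test function $w^{2(p-1)}\phi$, you via negative exponents of $u$ directly, which is the same computation), followed by the $BMO$ bound for $\log u$ obtained from the test function $\eta^p u^{1-p}$ and the John--Nirenberg lemma on the half-ball. The only cosmetic difference is that you route the boundary term through the interpolation of Observation \ref{o3} even in the varying-exponent stage, whereas the paper exploits $|f|\le\l$ pointwise to treat it directly as an $L^1$ lower-order term; both work.
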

\begin{proof}
    Here we adapt some ideas which comes from \cite[Theorem 4.15]{HL00} and calculate like lemma \ref{l2}.\\
    Set $w=u^{-1}$. From $(\ref{e56})$ we have
    \begin{equation}
   \int_\O |\n u|^{p-2}\n u\cdot\n \vp d x=\int_{\p\O}f(x)u^{p-1} \vp d\s,\,\,\,\forall \vp\in C_c^\infty(U).
    \label{e57}  
   \end{equation}
   \textbf{Step 1:}\textit{ we prove that $\D\inf_{B_{\rho/2}(x_0)\cap \O} u\ge c\bigg(\|u^{-1}\|_{L^t(B_{\rho}(x_0)\cap \O)}\bigg)^{-1}$ for any $t>0$}.\\
Let $\phi\in C_c^\infty(U)$ and use $\vp=w^{2(p-1)}\phi$ as a test function in $(\ref{e57})$, we obtain
\begin{align}
   \int_\O |\n w|^{p-2}\n w\cdot\n \phi d x=&-\int_{\p\O}f(x)w^{p-1} \phi d\s-2(p-1)\int_\O |\n w|^{p} w^{-1} \phi d x\non\\
      \le& -\int_{\p\O}f(x)w^{p-1} \phi d\s.
   \label{e58}
\end{align}
As in the proof in lemma \ref{l1}, the proof is based on a Moser iteration argument. Let $\eta\in C_c^\infty(U),\,\b\ge 1$ and use $\phi=\eta^p w^\b$ as a test function in $(\ref{e58})$, then we get
\begin{align}
     \b\int_\O |\n w|^p\,\eta^p w^{\b-1} d x\le&-p\int_\O |\n w|^{p-2}\n w \cdot\n\eta\,\eta^{p-1} w^\b dx\non\\&-\int_{\p\O} f(x)\eta^p  w^{\b+p-1} d\s.\non
     \end{align}
     By H\"{o}lder's inequality, we get
     \begin{align}
     \b\int_\O |\n w|^p\,\eta^p w^{\b-1} d x&\le C_1\bigg(\b^{-(p-1)}\int_\O |\n \eta|^p w^{\b+p-1} dx\non\\&+\int_{\p\O} |f(x)|\eta^p  w^{\b+p-1} d\s\bigg)\non\\
     \RA\int_\O |\n(\eta  w^{\frac{\b+p-1}{p}})|^p d x&\le C_1\bigg(\int_\O |\n \eta|^p w^{\b+p-1} dx\non\\&+\l\b^p\int_{\p\O} \eta^p  w^{\b+p-1} d\s\bigg),
     \label{e59}
\end{align}
where $C_1$ always depend only on $p$.\\
It follows from H\"{o}lder's inequality and Sobolev trace inequality that
\begin{align}  
\l\int_{\p\O}\eta^p  w^{\b+p-1} d\s\le
&C_{n,p}\,\d\,\|\eta w^{\frac{\b+p-1}{p}}\|_{L^{p_*}(\p\O)}^p\non\\
\le& C_{n,p} \,\d\int_\O |\n(\eta w^{\frac{\b+p-1}{p}})|^p d x.\non
\end{align}
First, we fix $\b=p_*-p+1$ and choose $\d>0$ small enough which depends only on $n,p$. Then, we get
\begin{align}
    \int_\O |\n(\eta w^{\frac{\b+p-1}{p}})|^p d x\le&
    C_2\int_\O |\n \eta|^p w^{\b+p-1} dx,\non
\end{align}
where $C_2$ always depends only on $n,p$.\\
Hence, thanks to the Sobolev trace inequality and Observation \ref{o2}, we obtain 
\begin{align}
    \|\eta^p w^{p_*}\|_{L^a(\O)}+\|\eta^p w^{p_*}\|_{L^b(\p\O)}\le
    C_2\int_\O |\n \eta|^p w^{p_*} dx.\non
\end{align}
Second, let $\b$ vary and $\operatorname{supp}\eta\subset W_1$, we come back to $(\ref{e59})$.
Also, using the Sobolev trace inequality and Observation \ref{o2}, we obtain 
\begin{align}
    \|\eta^p w^{\b+p-1}\|_{L^a(\O)}+\|\eta^p w^{\b+p-1}\|_{L^b(\p\O)}&\le
    C_2\bigg(\int_\O |\n \eta|^p w^{\b+p-1} dx\non\\
    &+\b^p\int_{\p\O} \eta^p  w^{\b+p-1} d\s\bigg),\non
\end{align}
Since $\rho\le c_n,\,\T Vol (B_{c_n})\le 1,\,\T Area (B_{c_n}\cap \p\O)\le 1$, here we choose $\eta$ such that $\eta=1$ in $B_{r}(x_0)$, $\eta=0$ outside $B_{R}(x_0)$ and $|\n \eta|\le C_n/(R-r)$, where $0<r<R<\rho$. Then by H\"{o}lder's inequality we have
\begin{align}
    \|w^{\b+p-1}\|_{L^b(\O\cap B_r(x_0))}+\|w^{\b+p-1}&\|_{L^b(\p\O\cap B_r(x_0))}\le
    \non\\C_3\bigg(\frac{1}{(R-r)^p}\| w^{\b+p-1}\|_{L^1(\O\cap B_R(x_0))}
+&\b^p\| w^{\b+p-1}\|_{L^1(\p\O\cap B_R(x_0))}\bigg).\label{e60}
\end{align}
where $C_3$ always depend only on $n,\,p,\,\rho$.\\
As in the proof of lemma \ref{l1}, the classical Moser iteration argument yields that
$$\|w\|_{L^\infty(B_{\rho/2}(x_0)\cap \O)}\le C\|w\|_{L^t(B_{\rho}(x_0)\cap \O)},$$
that is,
$$\inf_{B_{\rho/2}(x_0)\cap \O} u\ge c\bigg(\|u^{-1}\|_{L^t(B_{\rho}(x_0)\cap \O)}\bigg)^{-1}.$$
 Set $$v=\log u-\frac{\int_{\O\cap B_{\rho}(x_0)}\log u \,d x}{|\O\cap B_{\rho}(x_0)|}.$$
\textbf{Step 2:}\textit{ we prove that $v\in BMO$}.\\
Using $\vp=u^{-(p-1)}\eta^p$ as a test function in $(\ref{e57})$, it follows from H\"{o}lder's inequality that
\begin{align}
   (p-1)\int_\O |\n v|^{p} \eta^p d x=&-\int_{\p\O}f(x) \eta^p d\s-p\int_\O |\n v|^{p-2}\n v\cdot\n \eta \eta^{p-1} d x\non\\
     \RA \int_\O |\n v|^{p} \eta^p d x\le& C_1\bigg(\l\int_{\p\O}\eta^p d\s+\int_\O |\n \eta|^{p}d x\bigg).
   \label{e61}
\end{align}
For any ball $B_{r}(y)\subset B_{\rho}(x_0)$, we choose $\eta$ such that $\eta=1$ in $B_{r}(y)$, $\eta=0$ outside $B_{2 r}(y)$ and $|\n \eta|\le C_n/r$. Then by Poincare inequality and $(\ref{e61})$, we have
\begin{align}
   \int_{\O\cap B_r(y)} | v-(v)_r|^{p} d x&\le C_2 r^p\int_{\O\cap B_r(y)} |\n v|^{p} d x\non\\
   &\le C_2 r^p(\l r^{n-1}+r^{n-p})\non\\
   &\le C_2 r^n,\non
\end{align}
where $(v)_r:=\frac{\int_{\O\cap B_{r}(y)}v d x}{|\O\cap B_{r}(y)|}$.\\
Hence, by H\"{o}lder's inequality, we get
\begin{align}
r^{-n}\int_{\O\cap B_r(y)} | v-(v)_r| d x\le C_n r^{-\frac{n}{p}}\bigg(\int_{\O\cap B_r(y)} | v-(v)_r|^{p} d x\bigg)^\frac{1}{p}
\le  C_2,\non
\end{align}
which implies that $v\in \T{BMO}$.\\
\textbf{Step 3:}\textit{ for any $p_1>0$, we have
   $\inf_{B_{\rho/2}(x_0)\cap \O} u\ge c\|u\|_{L^{p_1}(B_{\rho}(x_0)\cap \O)}.$}\\
Note that John-Nirenberg lemma (\cite[Theorem 3.5]{HL00}) actually holds for cubes, so this lemma can be applied to $\O\cap B_{\rho}(x_0)$. \\
That is, if for any ball $B_{r}(y)\subset B_{\rho}(x_0)$, we have
\begin{align}
\int_{\O\cap B_r(y)} | v-(v)_r| d x\le M r^{n},\non
\end{align}
    then we have
    $$
    \int_{\O\cap B_\rho(x_0)} e^{\frac{p_0}{M}|v|} d x\le C
    $$
    for some $p_0,C$ depend only on $n$.\\
    Therefore, there exists $p_1$ depending only on $n,p$ such that
    \begin{align}
        \int_{\O\cap B_\rho(x_0)} e^{p_1|v|} d x&\le C_n\non\\
        \RA \|u^{-1}\|_{L^{p_1}(B_{\rho}(x_0)\cap \O)}\,&\|u\|_{L^{p_1}(B_{\rho}(x_0)\cap \O)}\le C_2.\label{e62}
    \end{align}
Together $(\ref{e62})$ with step 1, we obtain the result.

\end{proof}

\begin{cor}\label{cor1}
    Suppose $u$ is a solution of $(\ref{e1})$. Then there exists $R_3>1$ such that
   $$\sup_{W\cap \O} u_R\le C\inf_{W\cap \O} u_R\quad\T{for}\,\,R\ge R_3,$$
   where $C$ depends only on $n,p$.
\end{cor}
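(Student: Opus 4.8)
The plan is to deduce this global Harnack inequality on the annulus $W\cap\O$ from the local Harnack inequality of Lemma \ref{l4} by a standard chaining argument, after first checking that the hypotheses of Lemma \ref{l4} are met for $u_R$ when $R$ is large. First I would recall from Remark \ref{r2.4} and the proof of Proposition \ref{p1} that the rescaled functions $u_R$ defined in (\ref{e19}) solve (\ref{e20}), i.e.\ they solve (\ref{e56}) on $U\cap\O$ with $f(x)=V(x)=R^{-1}u_R^{p_*-p}(x)$; moreover, by the asymptotic upper bound (\ref{e17.1}) already proved (equivalently by (\ref{e54})), we have $\|u_R\|_{L^\infty(W_1\cap\O)}\le C$ uniformly in $R\ge R_2$, so $|f(x)|=R^{-1}u_R^{p_*-p}(x)\le C R^{-1}\le\l$ with $\l=CR^{-1}$ on $W_1\cap\p\O$, and $f\in L^\infty(W_1\cap\p\O)$. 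Hence there is $R_3\ge R_2$ such that for $R\ge R_3$ the smallness condition $\l\le\d$ of Lemma \ref{l4} holds (with $\d$ the constant from that lemma, depending only on $n,p$), after rescaling coordinates if necessary so that the volume/area normalizations $\T{Vol}(B_{c_n})\le1$, $\T{Area}(B_{c_n}\cap\p\O)\le1$ are satisfied.

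Next I would fix a radius $\rho=\rho_0\le c_n$ small enough (depending only on $n$) that for every point $x_0\in \ov{W}\cap\ov\O$ the ball $B_{\rho_0}(x_0)$ is contained in $W_1$, so that Lemma \ref{l4} applies on each such ball and yields
\[
\inf_{B_{\rho_0/2}(x_0)\cap\O}u_R\ \ge\ c\,\|u_R\|_{L^{p_1}(B_{\rho_0}(x_0)\cap\O)}\ \ge\ c'\,\inf_{B_{\rho_0/2}(x_0)\cap\O}u_R
\]
trivially, but more to the point it gives a local Harnack inequality: combining this lower bound with the local sup bound $\sup_{B_{\rho_0/2}(x_0)\cap\O}u_R\le C\|u_R\|_{L^{p_1}(B_{\rho_0}(x_0)\cap\O)}$ coming from Lemma \ref{l2} (applied with the same $f$, whose relevant norms are small for $R$ large by Remark \ref{r2.4}), we obtain
\[
\sup_{B_{\rho_0/2}(x_0)\cap\O}u_R\ \le\ C_0\,\inf_{B_{\rho_0/2}(x_0)\cap\O}u_R
\]
with $C_0=C_0(n,p)$, for every $x_0\in\ov W\cap\ov\O$ and every $R\ge R_3$.

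Finally I would run the chaining argument: the set $\ov W\cap\ov\O=\{3\le|x|\le6\}\cap\{t\ge0\}$ is compact and connected, so it can be covered by finitely many balls $B_{\rho_0/2}(x_i)\cap\O$, $i=1,\dots,N$, with $N=N(n)$, such that consecutive balls overlap and any two points of $W\cap\O$ are joined by a chain of at most $N$ of them. Applying the local Harnack inequality on each ball of the chain and multiplying, one gets $u_R(x)\le C_0^{N}u_R(y)$ for all $x,y\in W\cap\O$, whence $\sup_{W\cap\O}u_R\le C_0^{N}\inf_{W\cap\O}u_R$ with $C:=C_0^N$ depending only on $n,p$. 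The main obstacle, and the point that needs the most care, is the bookkeeping on the hypotheses of Lemmas \ref{l2} and \ref{l4}: one must verify that for $R$ large the coefficient $f=V$ is genuinely small in the required norms ($L^\infty$ for the lower bound, $L^{(n-1)/(p-1)}$ for the upper bound) on the \emph{fixed} annulus $W_1$, uniformly in $x_0$ and $R$, which is exactly what Remark \ref{r2.4} and the already-established estimate (\ref{e54}) provide; once that is in hand the covering argument is routine.
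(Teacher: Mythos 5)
Your proposal is correct and follows essentially the same route as the paper: verify that $f=V$ satisfies the smallness hypotheses for large $R$, obtain the local lower bound from Lemma \ref{l4}, pair it with the local $L^\infty$--$L^{p_1}$ bound from Lemma \ref{l2} (with $t=p_1$) to get a ball-wise Harnack inequality, and conclude by a standard covering/chaining argument on the compact annulus. Your extra care about working on $W_1$ so the balls around points of $W$ stay where the uniform bound on $V$ is available is a sensible (and harmless) refinement of the paper's bookkeeping.
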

\begin{proof}
    In section 2.2.1, we have shown 
\begin{align*}
    \|u_R\|_{L^\infty(W\cap \O)}\le  C\,\,
    ,\T{for}\,\,R\ge R_2\\
    \RA |V(x)|\le C R^{-1}\ra 0\,\,
    ,\T{as}\,\,R\ra\infty.
\end{align*}
So, by lemma \ref{l4}, there exists $R_3>1,\,p_1>0$ such that for any $0<\rho\le c_n$, $x_0\in\O\cap W$, we have 
   $$\inf_{B_{\rho/2}(x_0)\cap \O} u_R\ge c\|u_R\|_{L^{p_1}(B_{\rho}(x_0)\cap \O)}\quad\T{for}\,\,R\ge R_3,$$
   where $c$ depends only on $n,p,\rho$; and $p_1$ depends only on $n,p$.\\
Using lemma \ref{l2} with $\l=R^{-1},\,f(x)=V(x),\,t=p_1$, we obtain that there exists $R_3>1$ such that for any $0<\rho\le c_n$, $x_0\in\O\cap W$, we have
   $$\sup_{B_{\rho/2}(x_0)\cap \O} u_R\le C\inf_{B_{\rho/2}(x_0)\cap \O} u_R\quad\T{for}\,\,R\ge R_3,$$
   where $C$ depends only on $n,p,\rho$.\\
   Then, choose $\rho=c_n$ and a classical covering argument ends the proof of this corollary.
    
\end{proof}

\begin{proof}[Proof of Proposition \ref{p1} (\ref{e17.2})]
To prove $(\ref{e17.2})$, it suffices to prove there exists $R_1>1$ such that if $R>R_1$, then
$$ u_R(x)\ge C_0\,\,\T{in}\,\,\O\cap W.$$
Following the proof of \cite{Veto16}, we assume by
contradiction that $(\ref{e17.2})$
does not hold true. Then, it follows from corollary \ref{cor1} that there exists a sequence $(R_k)_{k\in \nz}$ such that 
\begin{align}
    R_k\ra\infty\quad \T{and}\quad \sup_{W\cap \O} u_{R_k}\ra 0\,\,\,\T{as}\,\,\,k\ra\infty.\label{e63}
\end{align}
Since $u_R$ satisfy $(\ref{e20})$,  by elliptic regularity theory for p-Laplacian type equations \cite{L88,L91}, we obtain
$\|u_{R_k}\|_{C^{1,\a}(\O\cap \{4\le |x|\le 5\})}\le C$, where $C$ is independent of $k$. It then follows from
$(\ref{e63})$ that up to a  subsequence $u_{R_k}\ra 0$ in $C^{1}(\O\cap \{4\le |x|\le 5\})$. Let $\eta\in C_c^\infty(\rz^n)$ be a cut-off function such that $\eta=1$ in $B_{4}$, $\eta=0$ outside $B_{5}$ and $|\n \eta|\le C_n$. Denote by $\eta_k (x):=\eta(x/R_k)$. By testing $(\ref{e20})$
with $\eta$ and using H\"{o}lder’s inequality, we obtain
\begin{align}
    \int_{\O} |\n u_{R_k}|^{p-2}\n u_{R_k}\cdot\n\eta \,d x&=
    \int_{\p\O} R_k^{-1} u_{R_k}^q \eta \,d\s=\int_{\p\O} u^q \eta_k \,d\s\non\\
    \RA \int_{\p\O} u^q \eta_k \,d\s\le
    C_n\|\n u_{R_k}&\|_{L^\infty(\O\cap \{4\le |x|\le 5\})}^{p-1}\ra 0\,\,\,\T{as}\,\,\,k\ra \infty.
    \label{e64}
\end{align}
It follows from $(\ref{e64})$ that $u\equiv 0$ on $\p\O$. Then by testing $(\ref{e20})$
with $\eta^p u_{R_k}$ and using $\|u_{R_k}\|_{C^{1,\a}(\O\cap \{4\le |x|\le 5\})}\le C$, we obtain
\begin{align}
    \int_{\O} |\n u_{R_k}|^p\eta^p d x=&
    -p\int_{\O} |\n u_{R_k}|^{p-2}\n u_{R_k}\cdot\n\eta\eta^{p-1}u_{R_k} d x\non\\
    \RA \int_{\O} &|\n u_{R_k}|^p\eta^p d x\le C.
    \label{e65}
\end{align}
It follows from $(\ref{e19})$ and $(\ref{e65})$ that
\begin{align}
    \int_{\O} |\n u|^p\eta_k^p d x\le C R_k^\frac{p-n}{p-1}\ra 0\,\,\,\T{as}\,\,\,k\ra \infty.\label{e66}
\end{align}
It follows from $(\ref{e66})$ that $u\equiv 0$ on $\O$, which is in contradiction with $(\ref{e1})$. This ends the proof.

\end{proof}

\subsubsection{Asymptotic limits of $u$ and $\n u$}
The proof of $(\ref{e17.3})$ closely follows \cite[Section 4]{Veto16}.\\
Define
$$
\G_R(u):=\operatorname*{min}_{x\in S^{n-1}_+}R^{\frac{n-p}{p-1}}u(R x)
$$
where $S^{n-1}_+$ is the upper half of unit sphere. 

\begin{proof}[Proof of Proposition \ref{p1} (\ref{e17.3})]
Firstly, we prove the following claim.
\begin{clm}\label{c2}
    There exists $\a>0$ such that
    $\D\lim_{R\ra\infty}\G_R(u)=\a$.
\end{clm}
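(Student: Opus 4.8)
The plan is to show that $\G_R(u)$ converges as $R\to\infty$ by proving it is monotone (or asymptotically monotone) together with the uniform two-sided bounds already established. First I would use Proposition \ref{p1} parts (\ref{e17.1}) and (\ref{e17.2}): these give $0<C_0\le \G_R(u)\le C_1$ for all $R$ large, so $\limsup$ and $\liminf$ of $\G_R(u)$ are finite and positive, and it remains only to show they coincide. The natural device, following \cite[Section 4]{Veto16}, is to compare $\G_R$ with a suitable radial $p$-harmonic comparison function on annular regions. Recall that on $\rz^n_+$ the function $x\mapsto |x|^{-\frac{n-p}{p-1}}$ is $p$-harmonic away from the origin; thus for the rescaled solution $u_R$ on an annulus $\{a<|x|<b\}\cap\O$ (which is $p$-harmonic in the interior by (\ref{e20}), with a boundary term of size $O(R^{-1})$ on $\p\O$ that tends to $0$), the function $\G_R(u)\,\big(\tfrac{|x|}{?}\big)^{-\frac{n-p}{p-1}}$ (appropriately normalized) serves as a barrier from below on the sphere where the minimum is attained.

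The key steps, in order, are: (i) fix $R'>R$ with $R'/R$ bounded, and write $u_{R'}(x)=\big(\tfrac{R'}{R}\big)^{\frac{n-p}{p-1}}u_R\big(\tfrac{R'}{R}x\big)$, so that controlling $\G_{R'}$ in terms of $\G_R$ amounts to controlling $u_R$ on a dilated sphere by its value on the original one; (ii) apply the comparison principle for the $p$-Laplacian on the annulus $\{3<|x|<6\}\cap\O$ — here I would use that $\Delta_p u_R=0$ in the interior and that the Neumann boundary contribution $R^{-1}u_R^q$ is controlled by (\ref{e17.1}) hence $\to0$, so up to an error $o(1)$ the function $u_R$ is squeezed between two radial $p$-harmonic functions matching $\G_R(u)$ and $\sup_{S^{n-1}_+}R^{\frac{n-p}{p-1}}u(Rx)$ on the relevant spheres; (iii) invoke the Harnack-type inequality of Corollary \ref{cor1}, $\sup_{W\cap\O}u_R\le C\inf_{W\cap\O}u_R$, to see that this $\sup$ and this $\inf$ (hence $\G_R$) are comparable uniformly; (iv) combining (ii) and (iii), deduce that $\G_{R'}(u)\ge \G_R(u)\big(1-o(1)\big)$ as $R\to\infty$ with $R'/R$ in a fixed compact range, and iterate this over a geometric sequence of scales to conclude $\liminf_{R\to\infty}\G_R(u)\ge\limsup_{R\to\infty}\G_R(u)$, forcing convergence to some $\a\in[C_0,C_1]$.

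The main obstacle I expect is making step (ii) rigorous: the comparison principle for the $p$-Laplacian requires genuine sub/super-solutions, and the mixed Dirichlet–Neumann geometry on $\{a<|x|<b\}\cap\O$ (Dirichlet data on the inner and outer spherical caps, Neumann data with the nonlinear term $R^{-1}u_R^q$ on the flat piece $\p\O$) means the radial barriers $c\,|x|^{-\frac{n-p}{p-1}}$ are not exact solutions of the boundary-value problem — their normal derivative on $\p\O$ vanishes, whereas $u_R$ has a nonzero (but small) Neumann term. The fix is to absorb this discrepancy: either perturb the barrier by an $O(R^{-1})$ correction that is $p$-superharmonic and has the right sign of normal derivative on $\p\O$, or run the argument with inequalities up to an explicit $o(1)$ error coming from testing (\ref{e20}) against the barrier and estimating $\int_{\p\O\cap W_1}R^{-1}u_R^q$ via (\ref{e17.1}). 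A secondary technical point is handling the minimum over $S^{n-1}_+$ (rather than a fixed point) when transferring between scales, which is precisely where Corollary \ref{cor1} is used to trade $\min$ for $\max$ at a uniform cost. Once convergence of $\G_R(u)$ is in hand, $\a>0$ is immediate from the lower bound (\ref{e17.2}).
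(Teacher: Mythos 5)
Your overall strategy --- radial $p$-harmonic barriers $c|x|^{-\frac{n-p}{p-1}}$ on annuli, with Proposition \ref{p1} reducing the claim to $\liminf\ge\limsup$ --- starts in the right place, but the way you propose to close the argument has two genuine gaps. First, the propagation step (ii)--(iv) is one-sided in a way the comparison principle cannot support: to conclude $u\ge w$ on an annulus $\{R<|x|<R'\}\cap\O$ you must verify $u\ge w$ on \emph{both} spherical boundary components, and knowing only that $u_R\ge\G_R(u)$ on the inner sphere tells you nothing on the outer sphere $|x|=R'$ --- which is exactly the quantity $\G_{R'}(u)$ you are trying to bound. So $\G_{R'}(u)\ge\G_R(u)(1-o(1))$ does not follow from the scheme as described. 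Second, even granting that inequality, iterating a multiplicative bound $(1-\e_{R_k})$ over a geometric sequence of scales yields $\liminf\ge\limsup$ only if the errors are summable and their tail sums tend to $0$; with a bare $o(1)$ error the infinite product can degrade to an arbitrary constant or to zero, so convergence does not follow without a quantitative rate that you do not have.

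The paper closes both gaps at once by arguing by contradiction: if $\liminf\G_R<\limsup\G_R$, continuity of $R\mapsto\G_R(u)$ produces $R_1<R_2$ with $\b:=\min_{[R_1,R_2]}\G_R(u)<\min\{\G_{R_1}(u),\G_{R_2}(u)\}$, i.e.\ a \emph{strict interior minimum}. On $A=\O\cap B_{R_2}\bs B_{R_1}$ one compares $u$ with $w=\b|x|^{-\frac{n-p}{p-1}}$: now $u-w>0$ on both spherical caps by the choice of $R_1,R_2$ (this is precisely the missing boundary control), while $\min_{\ov A}(u-w)=0$ is attained away from them, contradicting the strict comparison principle of \cite{S70}; the flat part of the boundary is excluded because $|Du|^{p-2}\p_t u=-u^q<0=|Dw|^{p-2}\p_t w$ there. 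Note in particular that the Neumann term, which you single out as the main obstacle to be absorbed by an $O(R^{-1})$ correction, in fact has the \emph{favorable sign} and needs no correction at all --- it is what rules out a minimum on $\p\O$. Corollary \ref{cor1} is not needed for this claim; positivity of the limit comes directly from (\ref{e17.2}).
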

By $(\ref{e17.2})$, we get that $\D\a:=\liminf_{R\ra\infty}\,\G_R(u)>0$.
Now we assume by
contradiction that Claim \ref{c2}
does not hold true. Then, we have $\D\limsup_{R\ra\infty}\,\G_R(u)>\a$, which implies there exists $R_2>R_1>1$ such that
$$
0<\b:=\min_{R_1\le R\le R_2}\G_R(u)<\min\{\G_{R_1}(u),\,\G_{R_2}(u)\}.
$$
Set $A=\O\cap B_{R_2}\bs B_{R_1}$ and $w(x):=\b|x|^{-\frac{n-p}{p-1}}$. Then, we have
\begin{equation}\label{ec2}
    \left\{
    \begin{array}{lr}
	\Delta_p u\le \Delta_p w=0 & ,\T{in} \,\,A\\
	|D u|^{p-2}\frac{\partial u}{\partial t}<0=|D w|^{p-2}\frac{\partial w}{\partial t} & ,\T{on} \,\,\p\O\cap \p A\\
 \D\min_A(u-w)=0<\min_{\O\cap \p A}(u-w)&
	\end{array}
 \right.     
\end{equation}
We then obtain that $(\ref{ec2})$ contradicts the strict comparison principle
of \cite{S70}. This ends the proof of Claim \ref{c2}.\\
Second, we assume by
contradiction that $(\ref{e17.3})$
does not hold true. Then, 
there exists $(R_k)_{k\in\nz}$ such that $R_k\ra\infty$ as $k\ra\infty$ and
\begin{equation}
\D\limsup_{k\ra\infty}\sup_{x\in S^{n-1}_+}\bigg(\big|u_{R_k}(x)-\a\big|+\big|\n u_{R_k}(x)+\a\frac{n-p}{p-1}x\big|\bigg)>0
\label{ec3}
\end{equation}
Since we have shown in Section 2.2.1 that for every compact set $B\subset\ov\O$, there exists
$\a_B\in(0,1)$ such that $(u_{R_k})_{k\in\nz}$
is bounded in $C^{1,\a_B}(B)$ and so there
exists a subsequence of $(u_{R_k})_{k\in\nz}$ which converges in $C^1_{loc}(\ov \O)$ to some
function $u_0\in C^1_{loc}(\ov\O)$. By passing to the limit as $k\ra\infty$ into $(\ref{e20})$, we obtain that $u_0$ satisfies the equation
\begin{equation}
    \left\{
    \begin{array}{lr}
	\Delta_p u_0=0 & , \T{in} \,\,\O\\
	|D u_0|^{p-2}\frac{\partial u_0}{\partial t}=0 & , \T{on} \,\,\p\O
	\end{array}
 \right.     \non
\end{equation}
By using Claim \ref{c2} and observing that $\G_r(u_{R_k})=\G_{r R_k}(u)$,  we obtain
$$
\G_r(u_0)=\lim_{k\ra\infty}\G_r(u_{R_k})=\a\quad\forall r>0.
$$
Also, using the strict comparison principle
of \cite{S70}, we get $u_0(x)=\a|x|^{-\frac{n-p}{p-1}}$ in $\O$, which contradicts $(\ref{ec3})$. This ends the proof of $(\ref{e17.3})$.

\end{proof}

\subsection{Asymptotic integral estimate on second order derivatives}
By using a Caccioppoli-type
inequality like \cite{CFR20}, in this subsection we will prove the following proposition.
\begin{prop}\label{p2}
   Let $u$ be a solution to $(\ref{e1})$. Then $|\n u|^{p-2}\n u\in W^{1,2}_{loc}(\ov\O)$, and for any
$\g\in\rz$ the following asymptotic estimate holds:
\begin{align}
    &\int_{B_R\cap\O}|\n(|\n u|^{p-2}\n u)|^2 u^\g d x\le C(1+R^{-n-\g\frac{n-p}{p-1}})\qquad \forall\, R>1,\non\\
    &\int_{\O\cap(B_{2 R}\bs B_R)}|\n(|\n u|^{p-2}\n u)|^2 u^\g d x\le C R^{-n-\g\frac{n-p}{p-1}}\qquad \forall\, R>1,\non
\end{align}
   where $C$ is a positive constant independent of $R$.
\end{prop}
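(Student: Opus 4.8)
The plan is to establish a Caccioppoli-type inequality for the nonlinear vector field $A_u := |\nabla u|^{p-2}\nabla u$, controlling its weak second derivatives by lower-order quantities weighted with a power of $u$, and then feed in the sharp asymptotic bounds of Proposition \ref{p1} to obtain the stated decay in $R$. First I would work with the regularized operator: for $\varepsilon>0$ set $A_\varepsilon(\xi) = (|\xi|^2+\varepsilon^2)^{(p-2)/2}\xi$ and recall that if $u$ solves $\Delta_p u = 0$ in $\Omega$ with the Neumann-type boundary condition, then difference-quotient arguments (as in \cite{CFR20}, adapting the classical interior estimates of \cite{L88,L91}) give $A_\varepsilon(\nabla u) \in W^{1,2}_{loc}$; the point is to prove bounds uniform in $\varepsilon$. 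Since the equation is $p$-harmonic in the interior, the boundary term is the only new feature compared with \cite{CFR20}: one differentiates the equation, tests with $\partial_k u \,\eta^2 u^\gamma$ (or the appropriate truncation thereof), and the boundary contribution involves $\partial_k(u^q)\,\partial_k u\, \eta^2 u^\gamma$ on $\partial\Omega$, which by the boundary condition and Young's inequality is absorbed using the already-established $C^{1,\alpha}_{loc}(\overline\Omega)$ regularity and the decay rates in \eqref{e17.1}.

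The key computation is the standard one: testing the differentiated equation against $\varphi = \partial_k u\, \eta^2 u^\gamma$ and summing over $k$, the left-hand side produces $\int \eta^2 u^\gamma \langle D A_\varepsilon(\nabla u)\nabla\partial_k u, \partial_k u\rangle$, which by the ellipticity $\langle DA_\varepsilon(\xi)\zeta,\zeta\rangle \ge c(|\xi|^2+\varepsilon^2)^{(p-2)/2}|\zeta|^2$ dominates (a multiple of) $\int \eta^2 u^\gamma |\nabla A_\varepsilon(\nabla u)|^2 (|\nabla u|^2+\varepsilon^2)^{-(p-2)/2 \cdot \text{(something)}}$ — more precisely one controls $|\nabla(|\nabla u|^{p-2}\nabla u)|^2$ directly by noting $|\nabla A_\varepsilon(\nabla u)| \le C(p)(|\nabla u|^2+\varepsilon^2)^{(p-2)/2}|\nabla^2 u|$ and $\langle DA_\varepsilon \nabla^2 u, \nabla^2 u\rangle \ge c(|\nabla u|^2+\varepsilon^2)^{(p-2)/2}|\nabla^2 u|^2$, so the two are comparable up to the weight. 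The right-hand side, after integration by parts, yields terms with $\nabla \eta$, terms with $\nabla(u^\gamma) = \gamma u^{\gamma-1}\nabla u$, and the boundary term; each is estimated by Young's inequality so as to absorb a small fraction of the good term, leaving integrals of the shape $\int (\text{cut-off derivatives})\,|\nabla u|^{p}\, u^{\gamma-?}$ plus the boundary integral $\int_{\partial\Omega} q\, u^{q-1}|\nabla u|^2 \eta^2 u^\gamma$. Then I would pass to the limit $\varepsilon \to 0$ by lower semicontinuity on the left and dominated convergence on the right, obtaining $|\nabla u|^{p-2}\nabla u \in W^{1,2}_{loc}(\overline\Omega)$ together with the Caccioppoli bound.

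To get the two displayed estimates I would choose the cut-off $\eta$ adapted to the annulus $B_{2R}\setminus B_R$ (with $|\nabla\eta|\le C/R$) for the second inequality and to $B_R$ for the first, and then insert the pointwise bounds $u \sim |x|^{-(n-p)/(p-1)}$, $|\nabla u|\lesssim |x|^{-(n-1)/(p-1)}$ from Proposition \ref{p1}. On the annulus $|x|\sim R$ one has $|\nabla u|^p u^\gamma \lesssim R^{-p(n-1)/(p-1)} R^{-\gamma (n-p)/(p-1)}$, and integrating over a region of measure $\sim R^n$, together with the factor $R^{-p}$ from $|\nabla\eta|^p$ (the homogeneity works out because $-p(n-1)/(p-1) - p + n = -n$ after using $n-p(n-1)/(p-1) - p = \cdots$; this is the same bookkeeping as in \cite{CFR20}), yields exactly $R^{-n-\gamma(n-p)/(p-1)}$. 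The boundary integral is handled the same way using the trace decay on $\partial\Omega\cap\{|x|\sim R\}$ and the boundary condition $|\nabla u|^{p-1} = u^q$ there, which replaces $u^{q-1}|\nabla u|^2$ by $u^{q+1}|\nabla u|^{2-(p-1)} \cdot(\ldots)$ of the right homogeneity; summing the annular bounds over dyadic $R$ gives the global estimate on $B_R\cap\Omega$ with the harmless additive constant $1$ coming from the fixed ball $B_1$.

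The main obstacle I anticipate is the boundary term: unlike the whole-space case of \cite{CFR20}, one must justify the difference-quotient / second-derivative regularity up to the flat boundary $\partial\Omega$ and control the integral $\int_{\partial\Omega} u^{q-1}|\nabla u|^2 \eta^2 u^\gamma d\sigma$ without losing the sharp decay. This requires (i) using tangential difference quotients freely (the boundary is flat, so $\partial_k u$ for $k<n$ still satisfies a good equation with a boundary condition obtained by differentiating $|\nabla u|^{p-2}u_t = -u^q$ tangentially) and handling the normal direction via the equation itself, and (ii) checking that the trace-decay rates and the boundary condition combine to give a boundary contribution of the same order $R^{-n-\gamma(n-p)/(p-1)}$ as the interior one — a dimension count on $\partial\Omega$ (an $(n-1)$-dimensional set, so measure $\sim R^{n-1}$) which one verifies is consistent because the extra power of $u$ from $u^{q-1}$ together with $|\nabla u|^2$ supplies exactly the missing unit of decay. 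Once the homogeneity is confirmed, the remaining steps are routine Caccioppoli bookkeeping.
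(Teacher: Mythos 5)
Your overall strategy --- regularize the operator, prove a weighted Caccioppoli inequality for the stress field with weight $u^\g$, handle the boundary integral through the Neumann condition, pass to the limit, and then insert the decay rates of Proposition \ref{p1} --- is the same as the paper's. However, there is a genuine gap at the central step, namely in how the ``good term'' produced by ellipticity is supposed to control $\int|\n(|\n u|^{p-2}\n u)|^2u^\g\eta^2$. Testing the differentiated equation with $\p_k u\,\eta^2u^\g$ yields $\sum_k\langle DA_\e(\n u)\n\p_ku,\n\p_ku\rangle\ge c_p(|\n u|^2+\e^2)^{\frac{p-2}{2}}|\n^2u|^2$, whereas the quantity you need is $|\n(A_\e(\n u))|^2\le C_p(|\n u|^2+\e^2)^{p-2}|\n^2u|^2$; the conversion factor between the two is $(|\n u|^2+\e^2)^{\frac{p-2}{2}}$, exactly the ``weight'' you mention. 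For $p\ge2$ this factor is bounded by $\|\n u\|_\infty^{p-2}$ and your argument closes, but for $1<p<2$ it blows up (like $\e^{p-2}$) where $\n u$ is small, so the resulting Caccioppoli bound is not uniform in $\e$ and does not yield $|\n u|^{p-2}\n u\in W^{1,2}_{loc}(\ov\O)$. Since $\n u$ cannot be assumed bounded away from zero on compact sets (the limiting profile has an interior critical point), this is not a removable technicality. It is precisely why the paper, following \cite{CFR20,AKM18}, tests with the stress field itself, $\vp=a_m(\n u_k)u^\g\eta^2$: the good term is then $\sum_{i,m}\p_ma_i(\n u_k)\,\p_ia_m(\n u_k)$, the trace of the square of $\n(a(\n u_k))=DA(\n u_k)\n^2u_k$, which is a product of a symmetric positive matrix of uniformly bounded condition number with a symmetric matrix; \cite[Lemma 4.5]{AKM18} then gives $\sum_{i,m}X_{im}X_{mi}\ge c_p|\n X|^2$ pointwise, with no degenerate weight, for the whole range $1<p<n$. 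Without this (or an equivalent device) your proof only covers $p\ge2$.

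Two smaller points. First, the regularization is set up loosely: $u$ itself does not solve the $\e$-regularized equation, so one cannot literally ``differentiate the equation'' satisfied by $A_\e(\n u)$. The paper introduces auxiliary solutions $u_k$ of $\div(a^k(\n u_k))=0$ with the fixed boundary datum $u^q$, proves (via monotonicity of $a$ and the finite-energy condition) that they converge in $C^1_{loc}(\ov\O)$ back to $u$, derives the Caccioppoli inequality for $u_k$, and only then passes to the limit; this intermediate construction and the identification $\ov u=u$ must be part of the proof. Second, your homogeneity count on the annulus uses $|\n u|^p|\n\eta|^pu^\g$, which scales as $R^{-n-\g\frac{n-p}{p-1}}$ only when $p=2$; the correct right-hand side of the $L^2$-type Caccioppoli inequality is $\int|a(\n u)|^2|\n(u^{\g/2}\eta)|^2\,dx$, whose integrand is of order $|\n u|^{2(p-1)}\big(u^\g|\n\eta|^2+u^{\g-2}|\n u|^2\eta^2\big)$ and does give $R^{-n-\g\frac{n-p}{p-1}}$ for all $p$. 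Your treatment of the boundary term (tangential differentiation of the Neumann condition, normal direction handled via the equation and the explicit value of the test function on $\p\O$) is consistent with the paper's.
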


\begin{proof}

This proposition is obtained by using a Caccioppoli-type inequality. We argue by
approximation, following the approach in \cite{CFR20,CM18}.\\
We set
\begin{align*}
a_i(x):=|x|^{p-2}x_i,&\,\,H(x):=|x|^p\quad\T{for}\,\,x\in\rz^n,\\
a^k_i(x):=(a_i*\phi_k)(x),&\,\,H_k(x):=(H*\phi_k)(x)\quad\T{for}\,\,x\in\rz^n,
\end{align*}
where $\{\phi_k\}$ is a family of radially symmetric smooth mollifiers.
Standard properties 
of convolution and the fact $a(x)$ is continuous imply $a^k\ra a$ uniformly on compact subset of $\rz^n$. 
From \cite[Lemma 2.4]{FF97} and \cite{CFR20}, we have that $a^k$
satisfies the first two condition in $(\ref{e5})$ with $s$ replaced by $s_k$
, where
$s_k\ra 0$ as $k\ra \infty$.\\
For $k$ fixed, we let $u_k\in D^{1,p}(\O)$ be a solution of 
\begin{equation}\label{e67}
    \left\{
    \begin{array}{lr}
	\div (a^k(\n u_k))=0 & \T{in} \,\,\O,\\
	a^k(\n u_k)\cdot \nu=u^q & \T{on}\,\, \p \O,
	\end{array}
 \right.
\end{equation}
where $\nu$ is the unit outward normal of $\p\O$.

Note that $u_k$ can be constructed as follows. First, we consider the minimizer $u_k^R$ of the variation problem
\begin{align}
    \min_{v}\bigg\{\int_{\O\cap B_R}\frac{1}{p}H_k(\n v)\, d x-\int_{\p\O\cap B_R}u^q v\, d\s: v\in C_0^\infty(B_R)\bigg\},\label{e68}
\end{align}
then $u_k^R$ is the solution of
\begin{equation}\label{e69}
    \left\{
    \begin{array}{lr}
	\div (a^k(\n u_k^R))=0 & \T{in} \,\,\O\cap B_R,\\
	a^k(\n u_k^R)\cdot \nu=u^q & \T{on}\,\, \p\O\cap B_R,\\
 u_k^R=0 & \T{on}\,\, \O\cap \p B_R.
	\end{array}
 \right.
\end{equation}
We extend $u_k^R$ to be $0$ in $\O\cap B_R^c$. Then, by testing $(\ref{e69})$
with $u_k^R$, using $\|u\|_{L^{p_*}(\p\O)}\le C$ and Sobolev trace inequality, we obtain
\begin{align}
   C_{n,p}( \|u_k^R\|_{L^{p_*}(\p\O)}^p-1)\le\int_{\O} a^k(\n u_k^R)\cdot\n u_k^R \,d x&=
    \int_{\p\O} u_k^R u^q\,d\s\le C\|u_k^R\|_{L^{p_*}(\p\O)}\non\\
    \RA \|u_k^R\|_{L^{p_*}(\p\O)}\le C,\,\,&\|\n u_k^R\|_{L^{p}(\O)}\le C.
    \label{e70}
\end{align}
Also, by Observation \ref{o2} and $(\ref{e70})$, we have $\|u_k^R\|_{L^{p^*}(\O)}\le C$, where $C$ is independent of $k$ and $R$. Hence, there exist $ (u_k)_{k\in\nz}$ which is bounded in $D^{1,p}(\O)$, such that as $R\ra \infty$, up to a subsequence, we have
\begin{align}
    u_k^R\ru  u_k\,\,\T{in}\,\,L^{p^*}(\O),\,u_k^R\ru u_k\,\,\T{in}\,\,L^{p_*}(\p\O)\,\,,\,\,a^k(\n u_k^R)\ru a^k(\n u_k)\,\,\T{in}\,\,L^{\frac{p}{p-1}}(\O).\label{e71}
\end{align}
Testing $(\ref{e69})$
with $\eta\in C_c^\infty(\rz^n)$ and passing to the limit as $R\ra\infty$, it follows from $(\ref{e71})$ that
$u_k\in D^{1,p}(\O)$ is a solution of $(\ref{e67})$.\\
Since $\|u\|_{L^\infty(\O)}\le C$, it follows from Lemma \ref{l1} and elliptic regularity theory for p-Laplacian type equations \cite{L88,L91} that $u_k\in C^{1,\a}_{loc}(\ov \O)$, uniformly in $k$. So $(u_k)_{k\in\nz}$
converges up to a subsequence in
$C^1_{loc}(\ov \O)$ to some function $\ov u\in C^{1,\a}_{loc}(\ov \O)\cap D^{1,p}(\O)$, which is 
a solution of 
\begin{equation}\label{e72}
    \left\{
    \begin{array}{lr}
	\div (a(\n \ov u))=0 & \T{in} \,\,\O,\\
	a(\n \ov u)\cdot \nu=u^q & \T{on}\,\, \p \O.
	\end{array}
 \right.
\end{equation}
Testing $(\ref{e1})$ and $(\ref{e72})$ with $(\ov u-u)$, we obtain
$$
\int_\O (a(\n \ov u)-a(\n u))\cdot\n(\ov u-u) d x=0,
$$
which implies $\ov u-u\equiv C$. Since $u$ and $\ov u\in L^{p^*}(\O)$, we get $\ov u=u$.\\
Note that $(\ref{e67})$ is uniformly elliptic and implies $u_k\in C^\infty(\O)\cap W^{2,2}_{loc}(\ov \O)$. Since $u_k\in C^{1,\a}_{loc}(\ov \O)$, we have $a^k(\n u_k)\in W^{1,2}_{loc}(\ov \O)$. In addition, since $\O$ is smooth, $u_k\in C^{2,\a}_{loc}(\ov\O)$.

Given $R>1$ large, let $U$ be a $C^2$ domain such that
$\O\cap B_{4R}\subset U\subset\O\cap B_{5R}$. 
Note that, since $u$ is uniformly positive inside $ U$ (see Proposition \ref{p1}), for $k$ large enough
(depending on $R$) $u_k$ is also uniformly positive inside $ U$. We shall always assume that $k$ are sufficiently large so that this positivity property holds below. 

To simplify the notation, we shall drop the dependency of $a$ on k and we write $a$ instead of $a^k$ respectively.\\
Let $\psi\in C_c^{0,1}(B_{4R})$ and test $(\ref{e67})$ with $\psi$, we have
\begin{align}
   \int_{\O} a(\n u_k)\cdot\n \psi \,d x=
    \int_{\p\O} u^q \psi\,d\s.
    \label{e73}
\end{align}
For $\d>0$ small, we define the set
$$
 U_\d:=\{x\in U:\operatorname{dist}(x,\p U)>\d\}.
$$
It follows from \cite{GT77} that for small $\d>0$, we have $d(x):=\operatorname{dist}(x,\p U)\in C^2(U\bs U_{2\d})$ and every point $x\in U\bs U_{2\d}$ can be uniquely written as $x=y-|x-y|\nu(y)$, where $y(x)\in\p U$ and $\nu(y)$ is the unit outward normal to $\p U$ at $y$.
\\
Set 
$$
\g(t)=\begin{cases}
    0 & t\in[0,\d],\\
    \frac{t-\d}{\d} & t\in [\d,2\d),\\
    1 & t\in [2\d,\infty),
\end{cases}
\quad\T{and}\quad\xi_\d(x)=\g(d(x))\,\,\T{in}\,\, U.
$$
Then we have $\xi_\d\in C_c^{0,1}( U)$ satisfy
$$
\xi_\d=1\,\,\T{in}\,\, U_{2\d},\,\xi_\d=0\,\,\T{in}\,\, U\bs U_\d\,\,\T{and}\,\,\n\xi_\d(x)=-\frac{1}{\d}\nu(y(x))\,\,\T{inside}\,\, U_\d\bs U_{2\d}.
$$
Let $\vp\in C^2( U)\cap C^{1}(\ov{ U})$ and $\operatorname{supp}(\vp)\Subset B_{4R}$. Using $\psi=\p_m\vp\,\xi_\d$ in $(\ref{e73})$ with $1\le m\le n$ and integrating by parts, we obtain
\begin{align}
    \sum_i\bigg(-\int_{ U}\p_{i}\vp\,\p_m(a_i(\n u_k)\xi_\d) d x&+\int_{ U}a_i(\n u_k)\p_{m}\vp\,\p_i \xi_\d d x\bigg)=0\non\\
    \RA \sum_i\int_{ U}\p_{i}\vp\p_m a_i(\n u_k)\xi_\d d x=&\sum_i\bigg(-\int_{ U}\p_{i}\vp\,a_i(\n u_k)\p_m\xi_\d d x\non\\
    &+\int_{ U}a_i(\n u_k)\p_{m}\vp\,\p_i \xi_\d d x\bigg).\label{e74}
\end{align}
Since $\vp\in C^{1}(\ov{ U})$ and $\operatorname{supp}(\vp)\Subset B_{4R}$, we can pass to the limit as $\d\ra 0$ into $(\ref{e74})$ and get
\begin{align}
    \sum_i\int_{ U}\p_{i}\vp\,\p_m a_i(\n u_k) d x=&\sum_i\bigg(\int_{ \p \O}\p_{i}\vp\,a_i(\n u_k)\nu^m d \s\non\\
    &-\int_{\p\O}a_i(\n u_k)\p_{m}\vp\,\nu^i d \s\bigg),\label{e75}
\end{align}
where $\nu=(\nu^1,\dots,\nu^n)$ is then unit outward normal to $\p\O$. Since $\O=\rz^n_+$, we have $\nu=(0,\dots,0,-1)$ on $\p\O$.\\
Let $\eta\in C_c^\infty(B_{4R})$. We choose $\vp=a_m(\n u_k) u^\g \eta^2$ and divide $m$ into two cases.\\
\textbf{Case 1:} $m\neq n$\\
Then, it follows from $(\ref{e67})$ that $(\ref{e75})$ becomes
\begin{align}
    \sum_i\int_{U}\p_m a_i(\n u_k)\,\p_{i}(a_m(\n u_k) u^\g &\eta^2)\, d x=-\sum_i\int_{\p\O}u^q\p_{m}\vp\, d \s\non\\
=&\sum_i\int_{\p\O}\p_m(u^q)a_m(\n u_k) u^\g \eta^2\, d \s
    \label{e76}
\end{align}

\noindent\textbf{Case 2:} $m=n$\\
Note that $\vp=-u^{q+\g}\eta^2$ on $\p\O$ in this case.
Then, $(\ref{e75})$ becomes
\begin{align}
    \sum_i\int_{ U}\p_{i}\vp\,\p_m a_i(\n u_k) d x=&\sum_{i=1}^n\int_{ \p \O}\p_{i}\vp\,a_i(\n u_k)\nu^n d \s-\int_{\p\O}a_n(\n u_k)\p_{n}\vp\,\nu^n d \s\non\\
    =-\sum_{i=1}^{n-1}&\int_{\p \O}\p_{i}\vp\,a_i(\n u_k) d \s=\sum_{i=1}^{n-1}\int_{ \p \O}\p_{i}(u^{q+\g}\eta^2)\,a_i(\n u_k) d \s
    \label{e77}
\end{align}
Combining $(\ref{e76})$ and $(\ref{e77})$, we obtain
\begin{align}
\sum_{i,m=1}^n\bigg|\int_{U}\p_m a_i(\n u_k)\,\p_{i}(a_m(\n u_k) u^\g \eta^2)\, d x\bigg|&\le C\int_{\p\O}|a(\n u_k)|\,|\n u| u^{\g+q-1} \eta^2\, d \s\non\\
&+C \int_{\p\O}|a(\n u_k)|\,|\n \eta| u^{\g+q} \eta\, d \s,
    \label{e78}
\end{align}
where $C$ only depends on $n,p,\g$.\\
Using \cite[Lemma 4.5]{AKM18} and
arguing as in \cite{AKM18}, we obtain
\begin{align}
\int_{U}|\n a(\n u_k)|^2 u^\g \eta^2\, d x&\le C\bigg(\int_{U}|a(\n u_k)|^2 |\n (u^\frac{\g}{2} \eta)|^2\, d x\non\\+\int_{\p\O}|a(\n u_k)|&|\n u| u^{\g+q-1} \eta^2\, d \s
+\int_{\p\O}|a(\n u_k)|\,|\n \eta| u^{\g+q} \eta\, d \s\bigg).
    \label{e79}
\end{align}
Since $(u_k)_{k\in\nz}$ is bounded in $C^{1,\a}_{loc}(\ov \O)$, taking $\g=0$, it follows from $(\ref{e79})$ that $a^k(\n u_k)\in W^{1,2}_{loc}(\ov \O)$ and $\{a^k(\n u_k)\}_{k\in\nz}$ is uniformly bounded in $W^{1,2}_{loc}(\ov \O)$. \\
Since $(u_k)_{k\in\nz}$
converges up to a subsequence in
$C^1_{loc}(\ov \O)$ to $u$ and $a^k\ra a$ uniformly on compact subset of $\rz^n$, we get $a^k(\n u_k)\ra a(\n u)$ in $C^0_{loc}(\ov \O)$.
Therefore, as $k\ra\infty$, up to a subsequence, we have $a^k(\n u_k)\ru a(\n u)$ in 
$W^{1,2}_{loc}(\ov\O)$ and $a(\n u)\in 
 W^{1,2}_{loc}(\ov\O)$.
So, up to a subsequence, by passing to the limit as $k\ra\infty$ into $(\ref{e79})$ , we obtain
\begin{align}
\int_{U}|\n a(\n u)|^2 u^\g \eta^2\, d x&\le C\bigg(\int_{U}|a(\n u)|^2 |\n (u^\frac{\g}{2} \eta)|^2\, d x\non\\+\int_{\p\O}|a(\n u)|&|\n u| u^{\g+q-1} \eta^2\, d \s
+\int_{\p\O}|a(\n u)|\,|\n \eta| u^{\g+q} \eta\, d \s\bigg).
    \label{e80}
\end{align}
First, let $\eta$ be a non-negative cut-off function such that $\eta=1$ in $B_R$, $\eta=0$ outside $B_{2R}$ and $|\n \eta|\le \frac{C_n}{R}$. It follows from Proposition \ref{p1} and $(\ref{e80})$ that
\begin{align}
    \int_{B_R\cap\O}|\n(a(\n u))|^2 u^\g d x\le C(1+R^{-n-\g\frac{n-p}{p-1}})\qquad \forall\, R>1.\non
\end{align}
Second, let $\eta$ be a non-negative cut-off function such that $\eta=1$ in $B_{2 R}\bs B_R$, $\eta=0$ outside $B_{3R}\bs B_\frac{R}{2}$ and $|\n \eta|\le \frac{C_n}{R}$. Using Proposition \ref{p1} and $(\ref{e80})$, we get
\begin{align}
    \int_{(B_{2 R}\bs B_R)\cap\O}|\n(a(\n u))|^2 u^\g d x\le C R^{-n-\g\frac{n-p}{p-1}}\qquad \forall\, R>1,\non
\end{align}
where $C$ is independent of $R$.

\end{proof}

\section{Proof of Theorem \ref{t1}}
To simplify the computation, we consider the auxiliary function
\begin{equation}
    v=\frac{n-p}{p}u^{-\frac{p}{n-p}},\non
\end{equation}
where $u$ is a solution of $(\ref{e1})$. A straightforward computation shows that $v > 0$ satisfies the
following equation
\begin{equation}\label{e81}
    \left\{
    \begin{array}{lr}
	\Delta_p v=\frac{n(p-1)}{p}\frac{|\n v|^p}{v} & \T{in}\quad \O\\
	|\n v|^{p-2}\frac{\partial v}{\partial t}=1 & \T{on}\quad \p\O
	\end{array}
 \right.     
\end{equation}
In particular, $v\in C_{loc}^{1,\a}(\ov\O)$ for some $\a\in(0,1)$, and it follows from
Proposition \ref{p1} that there exist constants $C_0,C_1>0$ such that
\begin{equation}
    C_0\le |x|^{-\frac{p}{p-1}}v(x)\le C_1,\,\,|x|^{-\frac{1}{p-1}}|\n v(x)|\le C_1\label{e82}
\end{equation}
for $|x|$ sufficiently large. Also, Proposition \ref{p2} becomes that $|\n v|^{p-2}\n v\in W^{1,2}_{loc}(\ov\O)$ and
\begin{align}
    \int_{\O\cap(B_{2 R}\bs B_R)}|\n(|\n v|^{p-2}\n v)|^2 v^\s d x\le C R^{n+\s\frac{p}{p-1}}\qquad \forall\, R>1,\label{e83}
\end{align}
   where $C$ is a positive constant independent of $R$.

\subsection{A differential identity}
In this subsection, by using Bochner's skill and a Pohozaev identity like \cite{GL23}, we show
that $u$ satisfies the following differential identity, which implies Theorem \ref{t1}.
\begin{prop}\label{p3}
     Let $u$ be a solution to $(\ref{e1})$. Then we have
     $$\p_j (|\n u|^{p-2}u_i)=q\bigg(\frac{|\n u|^{p-2}u_i u_j}{u}-\frac{|\n u|^p}{n u}\d_{i j}\bigg)\quad\T{in}\,\,\,L^2_{loc}(\ov \O).$$
     In particular, we have $|\n u|^{p-2}\n u\in C^{1,\a}_{loc}(\ov\O)$.
\end{prop}
\begin{proof}
As in the proof of Proposition \ref{p2}, proof of integral inequality requires a regularization argument considering the solutions of the following approximating equations.\\
We set
\begin{align*}
a_i(x):=|x|^{p-2}x_i,&\,\,H(x):=|x|^p\quad\T{for}\,\,x\in\rz^n,\\
a^k_i(x):=(\frac{1}{k^2}+|x|^2)^{\frac{p-2}{2}}x_i,&\,\,H_k(x):=(\frac{1}{k^2}+|x|^2)^\frac{p}{2}\quad\T{for}\,\,x\in\rz^n,
\end{align*}
Straightforward computation implies that $a^k\ra a$ uniformly on compact subset of $\rz^n$ and $a^k$
satisfies the first two condition in $(\ref{e5})$ with $s$ replaced by $\frac{1}{k^2}$.\\
Arguing as the proof of Proposition \ref{p2}, we obtain $u_k\in D^{1,p}(\O)$ being a solution of 
\begin{equation}\label{e87}
    \left\{
    \begin{array}{lr}
	\div (a^k(\n u_k))=0 & \T{in} \,\,\O,\\
	a^k(\n u_k)\cdot \nu=u^q & \T{on}\,\, \p \O.
	\end{array}
 \right.
\end{equation}
Also, we have $u_k\in C^\infty(\O)\cap C^{2,\a}_{loc}(\ov\O)$ and $u_k\in C^{1,\a}_{loc}(\ov \O)$, uniformly in $k$. Hence, $(u_k)_{k\in\nz}$
converges up to a subsequence in
$C^1_{loc}(\ov \O)$ to $u$. Besides, the proof of Proposition \ref{p2} shows that as $k\ra\infty$, up to a subsequence, we have $a^k(\n u_k)\ru a(\n u)$ in 
$W^{1,2}_{loc}(\ov\O)$. In the following, we only consider the subsequence of $(u_k)_{k\in\nz}$ satisfying that $u_k\ra u$ in $C^1_{loc}(\ov \O)$ and $a^k(\n u_k)\ru a(\n u)$ in 
$W^{1,2}_{loc}(\ov\O)$.\\
Set 
\begin{align}
    &X=(X_i),\,X_i:=|\n u|^{p-2}u_i,\,X_{ij}:=\p_j X_i,\,L_{ij}:=\frac{X_i u_j}{u}-\frac{|\n u|^p}{n u}\d_{i j},\non\\
    &X^k_i:=a^k_i(\n u_k),\,X^k_{ij}:=\p_j X^k_i,\,L^k_{ij}:=\frac{X^k_i \p_j u_k}{u}-\frac{H_k(\n u_k)}{n u}\d_{i j}-D^k_{ij},
    \label{e88}
\end{align}
where 
$$
D^k_{ij}=\frac{p-2}{n u}(\frac{n-1}{p-1}+1)(\frac{1}{k^2}+|\n u_k|^2)^\frac{p-2}{2}\frac{\frac{1}{k^2}\p_i u_k \p_j u_k}{(\frac{1}{k^2}+|\n u_k|^2)}.
$$
Note that $L_{ij}$, $L^k_{ij}$ and $D^k_{ij}$ are symmetry, but $X_{ij}$ is not.\\
Since $ab\le \frac{(a+b)^2}{4}$, $u>0$ and $u_k\in C^{1,\a}_{loc}(\ov \O)$, uniformly in $k$, we can see $D^k\ra 0$ uniformly on compact subset of $\ov\O$.\\
Note that we can write the matrix $X^k_{ij}-q L^k_{ij}$ as $A (B+\frac{q}{n} C)$, where 
\begin{align}
    &A_{ij}=\d_{ij}+\frac{(p-2)\p_i u_k \p_j u_k}{(\frac{1}{k^2}+|\n u_k|^2)},\,\,B_{ij}=(\frac{1}{k^2}+|\n u_k|^2)^\frac{p-2}{2}\p_{ij}u_k\non\\
    &C_{ij}=\frac{(\frac{1}{k^2}+|\n u_k|^2)^\frac{p}{2}}{u}\bigg[\d_{ij}-(\frac{n-1}{p-1}+1)\frac{\p_i u_k \p_j u_k}{(\frac{1}{k^2}+|\n u_k|^2)}\bigg]\non
\end{align}
It follows from \cite[Lemma 4.5]{AKM18} that
\begin{equation}\label{e89}
    \D\sum_{i,j}(X^k_{ij}-q L^k_{ij})(X^k_{ji}-q L^k_{ji})\ge C_p\sum_{i,j}|X^k_{ij}-q L^k_{ij}|^2 \ge 0.
\end{equation}

To simplify the notation, we shall drop the dependency on k and we write $a$, $\tl u$, $X$, $X_i$, $X_{ij}$, $L_{ij}$, $D_{ij}$ instead of $a^k,\,u_k,\,X^k,\,X^k_i,\,X^k_{ij},\,L^k_{ij}$, $D^k_{ij}$ respectively.\\
First, using $(\ref{e87})$, that is $\sum_i X_{ii}=0$, we get the following Serrin-Zou type differential equality
\begin{align}
    &\sum_i\p_i\bigg(\sum_j u^{1-q} X_{ij}X_j-\frac{(n-1)q}{n} u^{-q}|X|^{\frac{p}{p-1}}X_i\bigg)\non\\
    &=\sum_{i,j}\bigg(u^{1-q} X_{iij}X_j+u^{1-q} X_{ij}X_{ji}
    +(1-q)u^{-q} X_{ij}X_j u_i\non\\
    &-\frac{(n-1) p}{n-p}u^{-q}|X|^{\frac{1}{p-1}}\frac{X_j X_{ji}X_i}{|X|}\bigg)+\frac{(n-1)q^2}{n} \sum_i u^{-q-1}|X|^{\frac{p}{p-1}}X_i u_i\non\\
    &=\sum_{i,j}\bigg(u^{1-q} X_{ij}X_{ji}
    -2 q u^{-q} X_{ij}X_j \tl{u}_i\bigg)+\frac{(n-1)q^2}{n}\sum_i u^{-q-1}|X|^{\frac{p}{p-1}}X_i u_i\non\\
    &+(1-q)\sum_{i,j} u^{-q} X_{ij}X_j (u_i-\tl{u}_i)+(q+1)P^k\non\\
    &=\sum_{i,j}u^{1-q} (X_{ij}-q L_{ij})(X_{ji}-q L_{ji})+E^k,\label{e90}
\end{align}
where 
\begin{align}
P^k=&\sum_{i,j}\bigg(u^{-q} X_{ij}X_j \tl{u}_i-u^{-q}|X|^{\frac{1}{p-1}}\frac{X_j X_{ji}X_i}{|X|}\bigg)\non\\
=&u^{-q}\sum_{i,j}X_{ij}X_j(\tl{u}_i-|X|^{\frac{1}{p-1}}\frac{\tl{u}_i}{|\n \tl u|})\label{e90.5}\\
    E^k=&\sum_{i,j}\bigg((1-q) u^{-q} X_{ij}X_j (\tl{u}_i-u_i)-2 q u^{1-q} X_{ij}D_{ij}
    \bigg)\non\\
    &+\frac{(n-1)q^2}{n}\sum_i u^{-q-1}|X|^{\frac{p}{p-1}}X_i (u_i-\tl{u}_i)+(q+1)P^k\non\\
    &+\frac{(n-1)q^2}{n}\sum_i u^{-q-1}|X|^{\frac{p}{p-1}}X_i \tl{u}_i-q^2\sum_{i,j}u^{1-q} L_{ij}L_{ji}.\label{e91}
    \end{align}
    Note that
    \begin{align}
   G^k:&=\frac{(n-1)q^2}{n}\sum_i u^{-q-1}|X|^{\frac{p}{p-1}}X_i \tl{u}_i-q^2\sum_{i,j}u^{1-q} L_{ij}L_{ji} \non\\
    &=\frac{(n-1)q^2}{n}\sum_i u^{-q-1}\big(|X|^{\frac{p}{p-1}}-(\frac{1}{k^2}+|\n \tl{u}|^2)^\frac{p}{2}\big)X_i \tl{u}_i\non\\
    &+q^2 u^{-q-1}\frac{1}{k^2}(\frac{1}{k^2}+|\n \tl{u}|^2)^{p-2}[(1-\frac{1}{n})|\n \tl{u}|^2-\frac{1}{n}\frac{1}{k^2}]+F^k,\label{e92}
\end{align}
where 
\begin{align}
    F^k=-q^2\sum_{i,j}u^{1-q} D_{ij}D_{ji}+2 q^2\sum_{i,j}u^{1-q}(\frac{X_i \tl{u}_j}{u}-\frac{(\frac{1}{k^2}+|\n \tl{u}|^2)^\frac{p}{2}}{n u}\d_{i j})D_{ij}.\label{e93}
\end{align}
Since $u>0$, $u_k\in C^{1,\a}_{loc}(\ov \O)$, uniformly in $k$, and $D^k\ra 0$ uniformly on compact subset of $\ov\O$, we have $F^k\ra 0$ uniformly on compact subset of $\ov\O$ and so does $G^k$.

Note that on compact subset of $\ov\O$, we have
$$
|P^k|\le C|\n X||X|\Big||X|^{\frac{1}{p-1}}-|\n \tl u|\Big|
$$
Because
$(u_k)_{k\in\nz}$
is bounded in
$C^1_{loc}(\ov \O)$ and $(\frac{1}{k^2}+|x|^2)^\frac{p-2}{2}x\ra |x|^{p-2}x$ uniformly on compact subset of $\rz^n$, as $k$ goes to infinity, we get
$$
|X^k|-|\n \tl u|^{p-1}\ra 0,\quad |X^k|-(\frac{1}{k^2}+|\n \tl{u}|^2)^\frac{p-1}{2}\ra 0\quad\T{in}\,\,C_{loc}^0(\ov\O)
$$
Since $(u_k)_{k\in\nz}$
is bounded in
$C^1_{loc}(\ov \O)$, $\{a^k(\n u_k)\}_{k\in\nz}$ is bounded in 
$W^{1,2}_{loc}(\ov\O)$, $x\mapsto x^{\frac{1}{p-1}}$ is locally Lipchitz continuous for $1<p<2$ and is uniformly continuous for $p\ge 2$, we have
$P^k\ra 0$ in 
$L^{2}_{loc}(\ov\O)$.

Therefore, using the fact that $(u_k)_{k\in\nz}$
converges in
$C^1_{loc}(\ov \O)$ to $u$, $D^k\ra 0$ uniformly on compact subset of $\ov\O$ and $\{a^k(\n u_k)\}_{k\in\nz}$ is bounded in 
$W^{1,2}_{loc}(\ov\O)$, we have $E^k\ra 0$ in 
$L^{2}_{loc}(\ov\O)$.

Second, using $(\ref{e87})$, we get the following Pohozaev-type differential equality
\begin{align}
    &\sum_i\p_n(X_i \tl{u}_i)-p\sum_i\p_i(X_i \tl{u}_n)\non\\
    =&\sum_i X_{i n}\tl{u}_i-(p-1)\sum_i X_i \tl{u}_{n i}\label{e94}
\end{align}
Since $X_{ij}=(\frac{1}{k^2}+|\n \tl{u}|^2)^{\frac{p-2}{2}}(\tl{u}_{ij}+\frac{(p-2)\tl{u}_i \tl{u}_l \tl{u}_{lj}}{\frac{1}{k^2}+|\n \tl{u}|^2})$, we have
\begin{align}
    \sum_i X_{i n}\tl{u}_i=&(\frac{1}{k^2}+|\n \tl{u}|^2)^{\frac{p-2}{2}}\bigg(1+\frac{(p-2)|\n\tl{u}|^2}{\frac{1}{k^2}+|\n \tl{u}|^2}\bigg)\sum_i \tl{u}_i \tl{u}_{n i}\non\\
    =&\bigg(1+\frac{(p-2)|\n\tl{u}|^2}{\frac{1}{k^2}+|\n \tl{u}|^2}\bigg)\sum_i X_i \tl{u}_{n i}\label{e95}
\end{align}
Using $(\ref{e94})$ and $(\ref{e95})$, we get
\begin{align}
    &\sum_i\p_n(X_i \tl{u}_i)-p\sum_i\p_i(X_i \tl{u}_n)\non\\
    =&-(p-2)\frac{1}{k^2}\frac{1}{\frac{1}{k^2}+|\n \tl{u}|^2}
    \sum_i X_i \tl{u}_{n i}\non\\
    =&-\bigg(1+\frac{(p-2)|\n\tl{u}|^2}{\frac{1}{k^2}+|\n \tl{u}|^2}\bigg)^{-1}\frac{(p-2)\frac{1}{k^2}}{\frac{1}{k^2}+|\n \tl{u}|^2}\sum_i X_{i n}\tl{u}_i:=I^k
    \label{e96}
\end{align}
Note that $$\min\{1,p-1\}\le (1+\frac{(p-2)|\n\tl{u}|^2}{\frac{1}{k^2}+|\n \tl{u}|^2})\le \max\{1,p-1\}.$$ 
Therefore, using the fact that $ab\le \frac{a^2+b^2}{2}$, we have
\begin{align*}
|I^k|\le& C_{n,p} \frac{\frac{1}{k^2}|\n \tl{u}|}{\frac{1}{k^2}+|\n \tl{u}|^2}|\n X|\\
\le &C_{n,p}\frac{1}{k}|\n X|
\end{align*}
Because $X^k\in W^{1,2}_{loc}(\ov \O)$, uniformly in k, we obtain $I^k\ra 0$ in $L^2_{loc}(\ov\O)$.\\
Then, it follows from $(\ref{e90})$ and $(\ref{e96})$ that
\begin{align}
    &\sum_i\p_i\bigg(\sum_j u^{1-q} X_{ij}X_j-\frac{(n-1)q}{n} u^{-q}|X|^{\frac{p}{p-1}}X_i\bigg)\non\\
    &+\frac{n-1}{n-p}\bigg[\sum_i\p_n(X_i \tl{u}_i)-p\sum_i\p_i(X_i \tl{u}_n)\bigg]\non\\
    &=\sum_{i,j}u^{1-q} (X_{ij}-q L_{ij})(X_{ji}-q L_{ji})+E^k+\frac{n-1}{n-p}I^k
    \label{e97}
\end{align}
Let $\eta\in C_c^\infty(B_R)$. We  multiply $(\ref{e97})$ by $\eta$ and integrate over $\O$. 

Then, it follows from the divergence theorem and $\nu=(0,\dots,0,-1)$ on $\p\O$ that
\begin{align}
    &\int_\O \bigg(
    \sum_{i,j}u^{1-q} (X_{ij}-q L_{ij})(X_{ji}-q L_{ji})+E^k+\frac{n-1}{n-p}I^k
    \bigg)\eta\, d x\non\\
    &=-\int_\O\sum_i\big(\sum_j u^{1-q} X_{ij}X_j-\frac{(n-1)q}{n} u^{-q}|X|^{\frac{p}{p-1}}X_i
    \big)\eta_i d x\non\\
    &-\frac{n-1}{n-p}\int_\O\big(\sum_i X_i \tl{u}_i\eta_n-p\sum_i X_i \tl{u}_n\eta_i
    \big) d x\non\\
    &-\int_{\p\O}\big(\sum_j u^{1-q} X_{nj}X_j-\frac{(n-1)q}{n} u^{-q}|X|^{\frac{p}{p-1}}X_n
    \big)\eta d\s\non\\
    &-\frac{n-1}{n-p}\int_{\p\O}\big(\sum_i X_i \tl{u}_i-p X_n \tl{u}_n
    \big)\eta d\s\label{e98}
\end{align}
Since $X^k_{n}=-u^q$ on $\p\O$ and $\sum_i X^k_{ii}=0$, we have
\begin{align}
    &\int_{\p\O}\big(\sum_j u^{1-q} X_{nj}X_j-\frac{(n-1)q}{n} u^{-q}|X|^{\frac{p}{p-1}}X_n
    \big)\eta d\s\non\\
=&\int_{\p\O}\big(-q\sum_{j=1}^{n-1} u_j X_j-u^{1-q}\sum_{j=1}^{n-1}X_{jj}X_n+\frac{(n-1)q}{n} |X|^{\frac{p}{p-1}}
    \big)\eta d\s\non\\
    =&\int_{\p\O}\big(-q\sum_{j=1}^{n-1} u_j X_j-\sum_{j=1}^{n-1}X_{j}u_j+\frac{(n-1)q}{n} |X|^{\frac{p}{p-1}}
    \big)\eta-\sum_{j=1}^{n-1}u X_{j}\eta_j d\s\non\\
    =&\int_{\p\O}\big(-\frac{(n-1)p}{n-p}\sum_{j=1}^{n-1} X_j\tl{u}_j +\frac{(n-1)q}{n} \sum_{j=1}^{n} X_j\tl{u}_j+J^k
    \big)\eta-\sum_{j=1}^{n-1}u X_{j}\eta_j d\s\non\\
    =&-\frac{n-1}{n-p}\int_{\p\O}\big(\sum_i X_i \tl{u}_i-p X_n \tl{u}_n
    \big)\eta d\s+\int_{\p\O}J^k\eta-\sum_{j=1}^{n-1}u X_{j}\eta_j d\s\label{e99}
\end{align}
where 
\begin{align}
    J^k=&\frac{(n-1)p}{n-p}\sum_{j=1}^{n-1} X_j(\tl{u}_j-u_j)+\frac{(n-1)q}{n}\big(|X|^\frac{p}{p-1}-(\frac{1}{k^2}+|\n \tl{u}|^2)^{\frac{p}{2}}\big)\non\\
    &+\frac{(n-1)q}{n}(\frac{1}{k^2}+|\n \tl{u}|^2)^{\frac{p}{2}}\frac{1/k^2}{\frac{1}{k^2}+|\n \tl{u}|^2}\non
\end{align}
Since $(u_k)_{k\in\nz}$ is bounded in $C^{1,\a}_{loc}(\ov \O)$, we can see that $(\frac{1}{k^2}+|\n \tl{u}|^2)^{\frac{p}{2}}\frac{\frac{1}{k^2}}{\frac{1}{k^2}+|\n \tl{u}|^2}\ra 0$ and $|X|^\frac{p}{p-1}\ra(\frac{1}{k^2}+|\n \tl{u}|^2)^{\frac{p}{2}}$ uniformly on compact subset of $\ov\O$.

Also, using the fact that $(u_k)_{k\in\nz}$
converges in
$C^1_{loc}(\ov \O)$ to $u$, we obtain $J^k\ra 0$ uniformly on compact subset of $\ov\O$.

It follows from $(\ref{e98})$ and $(\ref{e99})$ that
\begin{align}
    \int_\O &\bigg(
    \sum_{i,j}u^{1-q} (X_{ij}-q L_{ij})(X_{ji}-q L_{ji})
    \bigg)\eta\, d x\non\\
    =-&\int_\O\sum_{i,j}\big( u^{1-q} X_{ij}X_j-\frac{(n-1)q}{n} u^{-q}|X|^{\frac{p}{p-1}}X_i
    \big)\eta_i d x-\int_{\p\O}J^k\eta-u X_{j}\eta_j d\s\non\\
    &-\frac{n-1}{n-p}\int_\O\big(\sum_i X_i \tl{u}_i\eta_n-p\sum_i X_i \tl{u}_n\eta_i
    \big) d x-\int_{\O}\big(E^k+\frac{n-1}{n-p}I^k \big)\eta d x \label{e100}
\end{align}
Using $(\ref{e89})$ and $(\ref{e100})$, we get
\begin{align}
    \sum_{i,j}&\int_\O 
    u^{1-q} |X^k_{ij}-q L^k_{ij}|^2
    \eta\, d x\non\\
    \le -C_{p}^{-1}&\sum_{i,j}\int_\O u^{1-q} X^k_{ij}X^k_j\eta_i d x+C_{n,p}\bigg(\int_\O u^{-q}|X^k|^{\frac{2 p-1}{p-1}}|\n \eta| d x+\int_{\p\O}|J^k|\eta d\s\non\\
    &+\int_\O |X^k|^{\frac{p}{p-1}}|\n\eta|
     d x+\int_{\p\O} u|X^k||\n\eta|
    d\s+\int_{\O}\big(|E^k|+|I^k| \big)\eta d x \bigg)\label{e101}
\end{align}
Note that $(u_k)_{k\in\nz}$
converges in
$C^1_{loc}(\ov \O)$ to $u$ and $a^k(\n u_k)\ru a(\n u)$ in 
$W^{1,2}_{loc}(\ov\O)$, as $k\ra\infty$. Besides, we have shown $E^k,I^k\ra 0$ in $L^2_{loc}(\ov\O)$ and $J^k\ra 0$ uniformly on compact subset of $\ov\O$.
So, by passing to the limit as $k\ra\infty$ into $(\ref{e101})$ , we obtain
\begin{align}
&\limsup_{k\ra\infty}\sum_{i,j}\int_\O 
    u^{1-q} |X^k_{ij}-q L^k_{ij}|^2
    \eta\, d x\non\\
    &\le C_{p}^{-1}\bigg(-\sum_{i,j}\int_\O u^{1-q} X_{ij}X_j\eta_i d x\bigg)+C_{n,p}\bigg(\int_\O u^{-q}|\n u|^{2 p-1}|\n \eta| d x\non\\
    &+\int_\O |\n u|^{p}|\n\eta|
     d x +\int_{\p\O} u|\n u|^{p-1}|\n\eta|
     d \s\bigg)\non\\
    &\le C_{n,p}\bigg(\int_\O u^{1-q} |\n a(\n u)|\,|\n u|^{p-1}|\n\eta| d x+\int_\O u^{-q}|\n u|^{2 p-1}|\n \eta| d x\non\\
    &+\int_\O |\n u|^{p}|\n\eta|
     d x+\int_{\p\O} u|\n u|^{p-1}|\n\eta|
     d \s \bigg).
    \label{e102}
\end{align}
Let $R>1$ and $\eta$ be a non-negative cut-off function such that $\eta=1$ in $B_R$, $\eta=0$ outside $B_{2R}$ and $|\n \eta|\le \frac{C_n}{R}$. It follows from Proposition \ref{p1}, Proposition \ref{p2} and $(\ref{e102})$ that
\begin{align}
\limsup_{k\ra\infty}\sum_{i,j}\int_\O 
    u^{1-q} |X^k_{ij}-q L^k_{ij}|^2
    \eta\, d x\le C R^{\frac{p-n}{p-1}}.
    \label{e103}
\end{align}
Since $X^k_{ij}-q L^k_{ij}\ru X_{ij}-q L_{ij}$ in 
$L^{2}_{loc}(\ov\O)$, $u^{\frac{1-q}{2}}\eta^{\frac{1}{2}}\in L^\infty(\ov\O)$, we have \begin{align}  
u^{\frac{1-q}{2}}\eta^{\frac{1}{2}}(X^k_{ij}-q L^k_{ij})\ru u^{\frac{1-q}{2}}\eta^{\frac{1}{2}}(X_{ij}-q L_{ij})\quad\T{in}\quad L^{2}(\ov\O).\label{e104}
\end{align}
Therefore, it follows from $(\ref{e103})$ and $(\ref{e104})$ that 
\begin{align}
\sum_{i,j}\int_\O 
    u^{1-q} |X_{ij}-q L_{ij}|^2
    \eta\, d x\le C R^{\frac{p-n}{p-1}}.
    \label{e105}
\end{align}
Letting $R\ra\infty$, we get $X_{ij}-q L_{ij}=0$ in $L^2_{loc}(\ov\O)$.
Since $u\in C^{1,\a}_{loc}(\ov\O)$, we have $L_{ij}\in C^{\a}_{loc}(\ov\O)$ and $X_i\in C^{\a}_{loc}(\ov\O)$,
which implies $X_i\in C^{1,\a}_{loc}(\ov\O)$. This ends the proof of Proposition \ref{p3}.
 
\end{proof}

\subsection{Proof of Theorem \ref{t1}}
We consider the auxiliary function
\begin{equation}
    v=\frac{n-p}{p}u^{-\frac{p}{n-p}}.\non
\end{equation}
Since $a(\n u)\in C^{1,\a}_{loc}(\ov\O)$ and $a(\n v)=-u^{-\frac{n(p-1)}{n-p}}a(\n u)$, we have $a(\n v)\in C^{1,\a}_{loc}(\ov\O)$. Then, it follows from Proposition \ref{p3} that
\begin{equation}
\p_j( a_i(\n v(x)))=\l(x)\d_{ij},\label{e106}
\end{equation}
where $\l(x)=\frac{1}{n}u^{-\frac{(n-1)p}{n-p}}|\n u|^p$, $i,j\in\{1,\dots,n\}$.\\
Since elliptic regularity theory
yields that $v\in C^{2,\a}_{loc}(\O\cap \{\n v\neq 0\})$, which implies that $\l(x)\in C^{1,\a}_{loc}(\O\cap \{\n v\neq 0\})$. Then, using $(\ref{e106})$, we get $a(\n v)\in C^{2,\a}_{loc}(\O\cap \{\n v\neq 0\})$.
Therefore, it follows from $(\ref{e106})$ that for $j\neq i$, we have
\begin{equation}
\p_j \l(x)=\p_j\p_i( a_i(\n v(x)))=\p_i\p_j( a_i(\n v(x)))=0\non
\end{equation}
for any $x\in\O\cap \{\n v\neq 0\}$, which implies that $\l(x)$ is constant on each connected component of
$\O\cap \{\n v\neq 0\}$. Note that using Proposition \ref{p2}, we obtain that there exists $R_0>1$ such that $\ov\O\bs B_{R_0}\subset\{\n v\neq 0\}$. So we consider the connected component $U$ of
$\ov\O\cap \{\n v\neq 0\}$ such that $\ov\O\bs B_{R_0}\subset U$. Since $\l\in C^{\a}_{loc}(\ov\O)$, we get $\l(x)$ is constant on $U$. Then, we deduce that
$$
a(\n v)=\l(x-x_0)\quad\T{in}\,\, \ov U
$$
for some $x_0\in \rz^n$. Using $(\ref{e81})$, we obtain
\begin{align}\label{e107}
    n \l=\div(a(\n v))=\frac{n(p-1)}{p}\frac{|\n v|^p}{v}\quad\T{in}\,\, \ov U.
\end{align}

Note that $\{a(\n v)\neq 0\}=\{\n v\neq 0\}$ and $a_n(\n v)=1$ on $\p\O$ implies that $x_0\notin U\cup\p\O$.
Since for any $y\in\p U$, we have either $y\in\p\O$ or $a(\n v(y))=0$, we deduce that $\p U\subset\p\O\cup \{x_0\}$. 

Now, we divide $x_0$ into two cases.\\
\textbf{Case 1:} $x_0\in \rz^n_+$.
Then $x_0^n>0$, $U=\ov\O\bs\{x_0\}$ and $a(\n v)=\l(x-x_0)$ in $\ov\O$. Using $a_n(\n v)=1$ on $\p\O$ and $(\ref{e107})$, we get
$\l=-\frac{1}{x_0^n}<0$ and $v<0$, which is in contradiction with $u>0$.

\noindent\textbf{Case 2:} $x_0\in \rz^n_-$. Then $x_0^n<0$, $U=\ov\O$ and $a(\n v)=\l(x-x_0)$ in $\ov\O$. Since $a_n(\n v)=1$ on $\p\O$, we get
$\l=-\frac{1}{x_0^n}>0$.
The fact that $|\n v|^{p-2}\n v=\l (x-x_0)$ in $\ov\O$ implies 
$\n v=(\l |x-x_0|)^\frac{1}{p-1}\frac{x-x_0}{|x-x_0|}$. Therefore, it follows from $(\ref{e107})$ that 
$$
v(x)=\frac{p-1}{p}\l^\frac{1}{p-1}|x-x_0|^\frac{p}{p-1},
$$
where $\l=-\frac{1}{x_0^n}>0$.
Using $v=\frac{n-p}{p}u^{-\frac{p}{n-p}}$, we get Theorem \ref{t1}.

\section{Application}

Adapting the proof of Theorem \ref{t1}, we can get the following Theorem.

\begin{thm}\label{t2}
Let $n\ge 2,\,1<p<n,\,d>0$ and let $u$ be a solution to 
\begin{equation}\label{e109}
    \left\{
    \begin{array}{lr}
	\Delta_p u+d u^{p^*-1}=0 & \T{in} \quad \mathbb{R}^n_+\\
	|D u|^{p-2}\frac{\partial u}{\partial t}=\mp u^q &\T{on} \quad \p\mathbb{R}^n_+ \\
    u>0 & \T{in} \quad \mathbb{R}^n_+\\
     u\in D^{1,p}(\mathbb{R}^n_+),
	\end{array}
 \right.     
\end{equation}
Then 
$$
u(y,t)=\bigg(\frac{n-p}{p}\bigg)^{\frac{n-p}{p}}\bigg(\frac{\frac{n(p-1)}{p}\l^{\frac{p}{p-1}}|x-x_0|^{\frac{p}{p-1}}+d\frac{n-p}{p}}{n\l}\bigg)^{-\frac{n-p}{p}}
$$
for some $x_0\in \rz^{n}_{\mp}$ and $\l=\mp\frac{1}{x_0^n}>0$.
\end{thm}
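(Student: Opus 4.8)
The plan is to run, almost verbatim, the three stages used for Theorem \ref{t1} — (i) boundedness and sharp asymptotics of $u$ and $\n u$, (ii) an $L^2_{loc}$ bound on $\n(|\n u|^{p-2}\n u)$ with algebraic decay, (iii) a differential identity forcing $|\n v|^{p-2}\n v$ to be affine — now carrying along the extra interior term $d u^{p^*-1}$, which is handled exactly as the interior critical nonlinearity in the whole space problem $\Delta_p u+u^{p^*-1}=0$ of \cite{S16,Veto16,CFR20}. The auxiliary function is again $v=\frac{n-p}{p}u^{-\frac{p}{n-p}}$.

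\textbf{Step 1 (asymptotics).} First I would upgrade Lemma \ref{l1}, Lemma \ref{l2}, Lemma \ref{l4}, Corollary \ref{cor1} and Lemma \ref{l3} to allow, besides the boundary nonlinearity, an interior source $\div(a(\n u))=\Psi(x,u)$ with $|\Psi(x,z)|\le \L|z|^{p^*-1}$. Since $u\in D^{1,p}(\O)\subset L^{p^*}(\O)$, in every Moser iteration step the interior term is controlled by the smallness of the \emph{local} $L^{p^*}$ norm, in exact parallel with the control of the boundary term by the local $L^{p_*}$ norm; the proof of Lemma \ref{l3} is unaffected because testing with $u_h$ and $\eta^p u_h$ produces the same dichotomy. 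Under the blow-down $u_R(x)=R^{\frac{n-p}{p-1}}u(Rx)$ one gets
\begin{equation*}
\Delta_p u_R+dR^{-\frac{p}{p-1}}u_R^{p^*-1}=0\ \T{in}\ \O,\qquad |Du_R|^{p-2}\frac{\p u_R}{\p t}=\mp R^{-1}u_R^q\ \T{on}\ \p\O,
\end{equation*}
so both nonlinearities decay, while $\a|x|^{-\frac{n-p}{p-1}}$ remains $p$-harmonic and $u$ is a $p$-subsolution in $\O$ (because $\Delta_p u=-du^{p^*-1}\le 0$); hence the strict comparison principle of \cite{S70} still applies as in Claim \ref{c2}, the barrier in the boundary comparison being chosen according to the sign of the boundary condition. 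This reproduces Proposition \ref{p1}: $u\in C^{1,\a}_{loc}(\ov\O)$, $\frac{C_0}{1+|x|^{(n-p)/(p-1)}}\le u\le \frac{C_1}{1+|x|^{(n-p)/(p-1)}}$, $|\n u|\le \frac{C_1}{1+|x|^{(n-1)/(p-1)}}$, and $\big||x|^{\frac{n-p}{p-1}}u-\a\big|+\big||x|^{\frac{n-1}{p-1}}\n u+\a\frac{n-p}{p-1}x\big|\ra 0$. Then I would re-run the mollified approximation of Proposition \ref{p2}, now for $\div(a^k(\n u_k))=-du^{p^*-1}$ in $\O$ and $a^k(\n u_k)\cdot\nu=\pm u^q$ on $\p\O$ with $u$ a fixed datum; the extra term contributes $\p_m(du^{p^*-1})$ after differentiation, which is absorbed by the uniform $C^{1,\a}_{loc}$ bound on $u$, giving $|\n u|^{p-2}\n u\in W^{1,2}_{loc}(\ov\O)$ with $\int_{\O\cap(B_{2R}\bs B_R)}|\n(|\n u|^{p-2}\n u)|^2 u^\g\,dx\le CR^{-n-\g(n-p)/(p-1)}$.

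\textbf{Step 2 (equation for $v$ and the differential identity).} A direct computation, using $a(\n v)=-u^{-\frac{n(p-1)}{n-p}}a(\n u)$, shows $v>0$ solves
\begin{equation*}
\Delta_p v=\frac{\frac{n(p-1)}{p}|\n v|^p+d\frac{n-p}{p}}{v}\ \T{in}\ \O,\qquad |\n v|^{p-2}\frac{\p v}{\p t}=\pm 1\ \T{on}\ \p\O,
\end{equation*}
so compared with Theorem \ref{t1} only an additive constant $d\frac{n-p}{p}$ appears in the numerator; moreover $v\in C^{1,\a}_{loc}(\ov\O)$, $C_0|x|^{p/(p-1)}\le v\le C_1|x|^{p/(p-1)}$, $|\n v|\le C_1|x|^{1/(p-1)}$ for large $|x|$, and $|\n v|^{p-2}\n v\in W^{1,2}_{loc}(\ov\O)$ with the matching decay. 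The core is the analogue of Proposition \ref{p3}: via the regularization $a^k_i(x)=(k^{-2}+|x|^2)^{(p-2)/2}x_i$, the Serrin-Zou and Pohozaev differential identities — now carrying $\sum_i\p_i(|\n u|^{p-2}u_i)=-du^{p^*-1}\neq 0$, equivalently the extra constant in the $v$-equation — together with \cite[Lemma 4.5]{AKM18} and the decay estimates of Step 1, I would show the traceless part of $\n(a(\n v))$ vanishes in $L^2_{loc}(\ov\O)$, hence
\begin{equation*}
\p_j a_i(\n v)=\L(x)\,\d_{ij}\ \T{in}\ \O,\qquad \L(x)=\frac{1}{n}\Delta_p v.
\end{equation*}
The new interior and boundary contributions are lower order and vanish as $k\ra\infty$ in $L^2_{loc}$ exactly like the errors $E^k,I^k,J^k$ of Proposition \ref{p3}, and the boundary integrals still telescope using only $a_n(\n v)=\pm 1$ on $\p\O$.

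\textbf{Step 3 (conclusion).} From $\p_j a_i(\n v)=\L\d_{ij}$ and equality of mixed partials, $\n\L=0$ on each connected component of $\{\n v\neq 0\}$; by Step 1, $\ov\O\bs B_{R_0}\subset\{\n v\neq 0\}$, so $\L\equiv\l$ on the unbounded component $U$ and $a(\n v)=\l(x-x_0)$ on $\ov U$. Since $\{a(\n v)=0\}=\{\n v=0\}$ and $a_n(\n v)=\pm1\neq0$ on $\p\O$, we get $x_0\notin\ov U\cup\p\O$, so $\p U\subset\p\O\cup\{x_0\}$, forcing $U=\ov\O\bs\{x_0\}$ and $a(\n v)=\l(x-x_0)$ on all of $\ov\O$. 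Evaluating $a_n(\n v)=\pm1$ on $\p\O=\{x^n=0\}$ gives $-\l x_0^n=\pm1$, i.e. $\l=\mp 1/x_0^n$; the opposite sign forces $v<0$, contradicting $u>0$ as in Case 1 of the proof of Theorem \ref{t1}, so $x_0\in\rz^n_{\mp}$ and $\l>0$. Then $\n v=(\l|x-x_0|)^{1/(p-1)}\frac{x-x_0}{|x-x_0|}$, and substituting into $n\l=\Delta_p v=\frac{\frac{n(p-1)}{p}|\n v|^p+d\frac{n-p}{p}}{v}$ gives $v=\frac{\frac{n(p-1)}{p}\l^{p/(p-1)}|x-x_0|^{p/(p-1)}+d\frac{n-p}{p}}{n\l}$, and $u=(\frac{n-p}{p})^{(n-p)/p}v^{-(n-p)/p}$ is the asserted formula. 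I expect Step 2 to be the main obstacle: re-deriving the Serrin-Zou and Pohozaev identities without the relation $\sum_i\p_i(|\n u|^{p-2}u_i)=0$, which was used repeatedly in Proposition \ref{p3}, while checking that every new interior and boundary error term still vanishes in the regularization limit.
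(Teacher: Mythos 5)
Your proposal follows the paper's own route essentially step for step: the paper likewise generalizes the Moser-iteration lemmas and the weak-Lebesgue/Harnack estimates to admit the interior critical source (its Lemmas 4.2--4.5 and Propositions 4.2--4.3), re-runs the regularized Serrin--Zou and Pohozaev identities with $\sum_i\p_i(|\n u|^{p-2}u_i)=-du^{p^*-1}$ to obtain $\p_j(|\n u|^{p-2}u_i)=qL_{ij}-\frac{d}{n}u^{p^*-1}\d_{ij}$ (Proposition 4.4), and concludes via the same $v$-equation and rigidity argument. Apart from a harmless terminological slip (calling $u$ a $p$-subsolution while correctly writing $\Delta_p u\le 0$), the proposal is correct and matches the paper's proof.
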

 First, we prove that solutions to $(\ref{e109})$ are
bounded. The result holds for more general Neumann problems, in particular for problems with
a differential operator modelled on the p-Laplace operator.
\begin{lem}\label{l5}
    Suppose $u\in D^{1,p}(\O)$ is a solution to 
\begin{equation}\label{e110}
    \left\{
    \begin{array}{lr}
	\div (a(\n u))=f(x,u) & ,\T{in} \,\,\O\\
	a(\n u)\cdot \nu=\Phi(x,u) & ,\T{on} \,\, \p \O\\
    u>0 & ,\T{in} \,\,\O
	\end{array}
 \right.
\end{equation}
where $a:\rz^n\ra\rz^n$ is a continuous vector field such that the following holds: there exist $\a>0,\,\l,\g,\L,\mu\ge 0,$ and $0\le s\le 1/2$ such that 
\begin{equation}\label{e111}
    \left\{
    \begin{array}{lr}
	|a(x)|\le \a (|x|^2+s^2)^{\frac{p-1}{2}}&,\,x\in \rz^n \\
	x\cdot a(x)\ge \frac{1}{\a}\int_0^1(t^2|x|^2+s^2)^{\frac{p-2}{2}}|x|^2 dt&,\,x\in \rz^n\\
    |\Phi(x,z)|\le \l |z|^q+\g,\,|f(x,z)|\le \mu |z|^{p^*-1}+\L&,\,x\in \rz^n,\,z\in \rz.
	\end{array}
 \right.
\end{equation}
Then there exist $\d>0$ with the following property: let $0<\rho\le c_n$, $x_0\in\O$ be such that 
\begin{align*}
    &\T Vol (B_{c_n})\le 1,\,\T Area (B_{c_n}\cap \p\O)\le 1,\\
    &\|u\|_{L^{p_*}(B_{\rho}(x_0)\cap \p\O)}\le \d,\,\|u\|_{L^{p^*}(B_{\rho}(x_0)\cap \O)}\le \d
\end{align*}
   then for any $t>0$
   $$\|u\|_{L^\infty(B_{\rho/4}(x_0)\cap \O)}\le C\big(\|u\|_{L^t(B_{\rho/2}(x_0)\cap \O)}+\|u\|_{L^t(B_{\rho/2}(x_0)\cap \p\O)}+1\big)$$
   where $C$ depend only on $n,p,\a,\rho,\l,t,\g,\mu,\L$; and $\d$ depend only on $n,p,\a,\l,\mu$.
\end{lem}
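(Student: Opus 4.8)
The plan is to run the Moser iteration of Lemma \ref{l1} essentially verbatim; the single new feature is the interior source term $f(x,u)$, which I would absorb using the Sobolev inequality (Observation \ref{o2}) in exactly the way the boundary term $\Phi(x,u)$ is absorbed using the Sobolev trace inequality. So the argument is again a truncation-plus-iteration scheme, and I only need to explain how the interior term enters and why one smallness threshold handles both critical contributions.

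First I would fix $l,k>0$, set $\ov u=\min\{u,l\}+k$, and for $\eta\in C_c^\infty(\rz^n)$, $\b\ge1$, test $(\ref{e110})$ with $\vp=\eta^p(\ov u^\b-(l+k)^\b)$. Exactly as in the passage leading to $(\ref{e9})$, the structural conditions $(\ref{e111})$ on $a$ --- in particular $x\cdot a(x)\ge C_\a(|x|^p-s^p)$ --- bound the left-hand side below by $\b\int_\O|\n\ov u|^p\eta^p\ov u^{\b-1}dx$, while the right-hand side now carries, besides the gradient term (removed by Young's inequality) and the boundary term $\int_{\p\O}(\l\ov u^q+\g)\eta^p\ov u^\b\,d\s$ present in Lemma \ref{l1}, the extra interior term $\int_\O(\mu\ov u^{p^*-1}+\L)\eta^p\ov u^\b\,dx$. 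This yields the analogue of $(\ref{e9})$ in which, alongside the terms already appearing there, one has a critical interior term $\mu\b^p\int_\O\eta^p\ov u^{\b+p^*-1}dx$ and a lower-order $\L$-contribution of the shape $\L(\b/k)^{p-1}\int_\O\eta^p\ov u^{\b+p-1}dx$. The key point is that this new critical term is dual to the boundary critical term: writing $\ov u^{\b+p^*-1}=\ov u^{\b+p-1}\,\ov u^{p^*-p}$ and combining H\"older's inequality with Observation \ref{o2} (valid on $D^{1,p}$ by Remark \ref{r2.1.5}) over $B_\rho(x_0)\cap\O$ gives $\int_\O\eta^p\ov u^{\b+p^*-1}dx\le C_{n,p}(\d+k)^{p^*-p}\int_\O|\n(\eta\ov u^{(\b+p-1)/p})|^pdx$, exactly as $(\ref{e10})$ does on the boundary with the trace inequality.

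I would then follow the two-stage structure of Lemma \ref{l1}. In the first stage I fix $\b=p_*-p+1$, take $k=\d$, and choose $\d>0$ small --- depending only on $n,p,\a,\l,\mu$ --- so that $(\ref{e10})$ and its interior analogue simultaneously absorb the two critical contributions into the left-hand side; the remaining powers of $\b$ are then harmless, and Observation \ref{o2} together with the Sobolev trace inequality produces, as in $(\ref{e11})$, an improvement of integrability of $u$ both on $\O\cap B_{\rho/2}(x_0)$ and on $\p\O\cap B_{\rho/2}(x_0)$. In the second stage I let $\b$ vary with $\operatorname{supp}\eta\subset B_{\rho/2}(x_0)$ and use this improved integrability to write the boundary critical term as in $(\ref{e12})$ with $\eta^p\ov u^{\b+p-1}$ sitting in $L^{t_0}(\p\O)$ for some $1<t_0<b$, and the interior critical term analogously with $\eta^p\ov u^{\b+p-1}$ in some $L^{s_0}(\O)$, $1<s_0<a$; since $t_0<b$ and $s_0<a$ strictly, Observation \ref{o3} interpolates both strictly-subcritical norms against the $L^b(\p\O)$, resp.\ $L^a(\O)$, norms controlled by the Sobolev inequalities, exactly as in $(\ref{e13})$--$(\ref{e15})$. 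This gives coupled recursive inequalities of the form $(\ref{e15})$ for the interior and boundary $L^{\b+p-1}$-norms simultaneously, and a standard Moser iteration as in \cite[Theorem 4.1, Remark 4.2]{HL00} then delivers the stated $L^\infty$ bound.

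The main obstacle I expect is purely bookkeeping: checking that a single smallness threshold $\d=\d(n,p,\a,\l,\mu)$ absorbs the interior critical term (via the Sobolev inequality) and the boundary critical term (via the trace inequality) at the same time, and that after this absorption the residual interior and boundary exponents are both \emph{strictly} subcritical, so that Observation \ref{o3} gives a genuine iteration gain rather than a mere reproduction of $(\ref{e10})$. Once these points are settled the iteration is identical to the one in Lemma \ref{l1} --- the parameters $\g$ and $\L$ enter only the final constant $C$, not $\d$ --- so I would not anticipate any further difficulty.
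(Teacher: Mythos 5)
Your proposal is correct and follows essentially the same route as the paper's proof: the paper likewise runs the truncation-plus-Moser-iteration scheme of Lemma \ref{l1}, absorbs the new critical interior term $\mu\b^p\int_\O\eta^p\ov u^{\b+p^*-1}\,dx$ via H\"older's inequality and Observation \ref{o2} with the factor $(\d+k)^{p^*-p}=(\d+k)^{p^2/(n-p)}$ (its $(\ref{e116})$, dual to $(\ref{e115})$), and then executes the same two-stage argument with $\b=p_*-p+1$, $k=\d$ first and varying $\b$ with the strictly subcritical exponents $t_1\in(1,b)$, $t_2\in(1,a)$ and Observation \ref{o3} afterwards. The dependence of $\d$ on $n,p,\a,\l,\mu$ only, with $\g,\L$ entering the final constant, is exactly as in the paper.
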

\begin{proof}
The proof of Lemma \ref{l5} is similar to Lemma \ref{l1}.
We point out hereafter the changes required for the solution to $(\ref{e110})$.\\
We argue as Lemma \ref{l1}. Note that $(\ref{e6})$ becomes
\begin{align}
    &(\b-1)\int_\O a(\n \ov u)\cdot\n \ov u \,\eta^p\ov u^{\b-1} d x+\int_\O a(\n u)\cdot\n u \,\eta^p\ov u^{\b-1} d x\non\\
    =&-p\int_\O a(\n u)\cdot\n\eta\,\eta^{p-1}(\ov u^{\b-1}u_k-k^\b) dx+\int_{\p\O} \Phi(x,u)\eta^p(\ov u^{\b-1}u_k-k^\b) d\s\non\\
    &-\int_{\O} f(x,u)\eta^p(\ov u^{\b-1}u_k-k^\b) d x.\label{e112}
\end{align}
And $(\ref{e8})$ becomes
\begin{align}
    &(\b-1)\int_\O |\n \ov u |^p\,\eta^p\ov u^{\b-1} d x+
    \int_\O |\n u |^p\,\eta^p\ov u^{\b-1} d x\non\\
    \le& C_1\bigg(\int_{\p\O} (\l u_k^q+\g)\eta^p\ov u^{\b-1}u_k d\s
    +\int_\O |a(\n u)||\n\eta|\,\eta^{p-1}\ov u^{\b-1}u_k dx\non\\&+\int_{\O} (\mu u_k^{p^*-1}+\L)\eta^p\ov u^{\b-1}u_k d x+\b\int_{\O}\eta^p\ov u^{\b-1} d x\bigg)
    \label{e113}
\end{align}
where $C_1$ always depends only on $\a,p$.\\
Also, $(\ref{e9})$ becomes
\begin{align}
    \int_\O& |\n(\eta \ov u^{\frac{\b-1}{p}}u_k)|^p d x\le
    C_1\bigg(
    \b^{p-1}\int_\O \ov u^{\b-1}u_k^p|\n \eta|^p d x+(\frac{\b}{k})^p\int_{\O}\eta^p\ov u^{\b-1}u_k^p d x\non\\&+\g(\frac{\b}{k})^{p-1}\int_{\p\O}\eta^p\ov u^{\b-1}u_k^p d\s+\l\b^{p}\int_{\p\O}\eta^p\ov u^{\b-1}u_k^{q+1} d\s\non\\
    &+\L(\frac{\b}{k})^{p-1}\int_{\O}\eta^p\ov u^{\b-1}u_k^p d x
    +\mu \b^{p}\int_{\O}\eta^p\ov u^{\b-1}u_k^{p^*} d x
    \bigg)\label{e114}
\end{align}
Therefore, using H\"{o}lder's inequality, Sobolev trace inequality and Observation \ref{o2}, $(\ref{e10})$ becomes
\begin{align}  
\int_{\p\O}\eta^p\ov u^{\b-1}u_k^{q+1} d\s
\le& C_{n,p} \,(\d+k)^{\frac{p(p-1)}{n-p}}\int_\O |\n(\eta \ov u^{\frac{\b-1}{p}}u_k)|^p d x\label{e115}\\
\int_{\O}\eta^p\ov u^{\b-1}u_k^{p^*} d x\le
&\| u_k\|_{L^{p^*}(B_{\rho}(x_0)\cap \O)}^{\frac{p^2}{n-p}}\,\|\eta \ov u^{\frac{\b-1}{p}}u_k\|_{L^{p^*}(\O)}^p\non\\
\le& C_{n,p} \,(\d+k)^{\frac{p^2}{n-p}}\int_\O |\n(\eta \ov u^{\frac{\b-1}{p}}u_k)|^p d x\label{e116}
\end{align}
First, we fix $\b=p_*-p+1,\,k=\d$ and choose $\d>0$ small enough which depends only on $\l,\mu,n,p,\a$. Then, from Sobolev trace inequality, Observation \ref{o2}, $(\ref{e114})$, $(\ref{e115})$ and $(\ref{e116})$, we have
    \begin{align}
    \|\eta^p\ov u^{p_*-p}u_k^p\|_{L^a(\O)}+&\|\eta^p\ov u^{p_*-p}u_k^p\|_{L^b(\p\O)}\le
    C_2\bigg(
    \int_\O \ov u^{p_*-p}u_k^p|\n \eta|^p d x\non\\&+(\L+1)\b^p\int_{\O}\eta^p\ov u^{p_*-p}u_k^p d x+\g\b^{p}\int_{\p\O}\eta^p\ov u^{p_*-p}u_k^p d\s\bigg)\non
\end{align}
Since $\rho\le c_n,\,\T Vol (B_{c_n})\le 1,\,\T Area (B_{c_n}\cap \p\O)\le 1$, here we choose $\eta$ such that $\eta=1$ in $B_{\rho/2}(x_0)$ and $|\n \eta|\le C_n/\rho$, then by H\"{o}lder's inequality and $u_k\ge \ov u$, we have
\begin{align}
    &\|\ov u^{p_*-p}u_k^p\|_{L^a(\O\cap B_{\rho/2}(x_0))}+\|\ov u^{p_*-p}u_k^p\|_{L^b(\p\O\cap B_{\rho/2}(x_0))}\non\\
    &\le C_3\bigg(\| u_k^{p_*}\|_{L^{\frac{a}{b}}(\O\cap B_{\rho}(x_0))}
+\|u_k^{p_*}\|_{L^{1}(\p\O\cap B_{\rho}(x_0))}\bigg)\non\\
&\le C_3\bigg((\|u\|_{L^{p^*}(\O\cap B_{\rho}(x_0))}+k)^{p_*}
+\g(\| u\|_{L^{p_*}(\p\O\cap B_{\rho}(x_0))}+k)^{p_*}\bigg)\non
\end{align}
So, letting $l\ra\infty$, we have
\begin{align}
\RA &\|u_k\|_{L^{p_* a}(\O\cap B_{\rho/2}(x_0))}+\|u_k\|_{L^{p_* b}(\p\O\cap B_{\rho/2}(x_0))}\non\\
\le &C_3\bigg(\|u\|_{L^{p^*}(\O\cap B_{\rho}(x_0))}+\| u\|_{L^{p_*}(\p\O\cap B_{\rho}(x_0))}+\d\bigg)\label{e117}\\
\le &C_3\non
\end{align}
where $C_3$ always depends only on $\a,n,p,\l,\mu,\rho,\L,\g$.\\
Second, let $\b$ vary, choose $\eta$ such that $\operatorname{supp}(\eta)\subset B_{\rho/2}(x_0)$ and fix $k=\d$, we come back to $(\ref{e114})$. Note that $(\ref{e12})$ becomes
\begin{align}  
\int_{\p\O}\eta^p\ov u^{\b-1}u_k^{q+1} d\s\le
&\|u_k\|_{L^{p_* b}(B_{\rho/2}(x_0)\cap \p\O)}^{\frac{p(p-1)}{n-p}}\,\|\eta^p \ov u^{\b-1}u_k^p\|_{L^{t_1}(\p\O)}\non\\
\le
&C_3^{\frac{p(p-1)}{n-p}}\|\eta^p \ov u^{\b-1}u_k^p\|_{L^{t_1}(\p\O)}\label{e118}\\
\int_{\O}\eta^p\ov u^{\b-1}u_k^{p^*} d x\le
&\| u_k\|_{L^{p_* a}(B_{\rho/2}(x_0)\cap \O)}^{\frac{p^2}{n-p}}\,\|\eta^p \ov u^{\b-1}u_k^p\|_{L^{t_2}(\O)}\non\\
\le
&C_3^{\frac{p^2}{n-p}}\|\eta^p \ov u^{\b-1}u_k^p\|_{L^{t_2}(\O)}\label{e119}
\end{align}
where $\frac{1}{t_1}+\frac{b-1}{b^2}=1,\,\frac{1}{t_2}+\frac{a-1}{a b}=1\RA t_1\in(1,b),\,t_2\in(1,a)$.\\
Then using Observation \ref{o3}, Sobolev trace inequality and Observation \ref{o2}, $(\ref{e13})$ becomes
\begin{align}  
\int_{\p\O}\eta^p\ov u^{\b-1}u_k^{q+1} d\s
\le
& C_{n,p}\e \b^{-p} \int_\O |\n(\eta \ov u^{\frac{\b-1}{p}}u_k)|^p d x+ C_4\b^{\s_1}\|\eta^p \ov u^{\b-1}u_k^p\|_{L^{1}(\p\O)}
\label{e120}\\
\int_{\O}\eta^p\ov u^{\b-1}u_k^{p^*} d x
\le& \e \b^{-p} \|\eta^p \ov u^{\b-1}u_k^p\|_{L^{a}(\O)}+ C_4\b^{\s_2}\|\eta^p \ov u^{\b-1}u_k^p\|_{L^{1}(\O)}\non\\
\le& C_{n,p}\e \b^{-p} \int_\O |\n(\eta \ov u^{\frac{\b-1}{p}}u_k)|^p d x+ C_4\b^{\s_2}\|\eta^p \ov u^{\b-1}u_k^p\|_{L^{1}(\O)}
\label{e121}
\end{align}
where $\s_1=\frac{p(b-t_1)}{b(t_1-1)},\s_2=\frac{p(a-t_2)}{a(t_2-1)}$ and $C_4\sim C_3,\e$.\\
Then we choose $\e>0$ small enough depending only on $n,p,\a,\l,\mu$. From $(\ref{e114})$, $(\ref{e120})$ and $(\ref{e121})$, we get
\begin{align}
    \int_\O |\n(\eta \ov u^{\frac{\b-1}{p}}u_k)|^p d x\le&
    C_3\b^\s\bigg(
    \int_\O \ov u^{\b-1}u_k^p|\n \eta|^p d x+\int_{\O}\eta^p\ov u^{\b-1}u_k^p d x\non\\&+\int_{\p\O}\eta^p\ov u^{\b-1}u_k^p d\s\bigg),\label{e122}
\end{align}
where $\s=\max\{\s_1,\s_2,p\}$.\\
Since $\rho\le c_n,\,\T Vol (B_{c_n})\le 1,\,\T Area (B_{c_n}\cap \p\O)\le 1$, here we choose $\eta$ such that $\eta=1$ in $B_{r}(x_0)$, $\eta=0$ outside $B_{R}(x_0)$ and $|\n \eta|\le C_n/(R-r)$, where $0<r<R<\rho/2$. Then using H\"{o}lder's inequality, $(\ref{e15})$ becomes
\begin{align}
    \|\ov u^{\b-1}u_k^p\|_{L^b(\O\cap B_r(x_0))}+\|\ov u^{\b-1}u_k^p&\|_{L^b(\p\O\cap B_r(x_0))}\le
    \non\\C_3\b^\s\bigg(\frac{1}{(R-r)^p}\|\ov u^{\b-1}u_k^p\|_{L^1(\O\cap B_R(x_0))}
+&\|\ov u^{\b-1}u_k^p\|_{L^1(\p\O\cap B_R(x_0))}\bigg)\label{e123}
\end{align}
Finally, arguing as Lemma \ref{l1}, we finish the proof.
    
\end{proof}

\begin{rem}\label{r2.5}
Similar to Remark \ref{r2.2}, if $s=\g=\L=0$, we can choose $k=0$, and Lemma \ref{l5} becomes: for any $t>0$
   $$\|u\|_{L^\infty(B_{\rho/4}(x_0)\cap \O)}\le C\|u\|_{L^t(B_{\rho/2}(x_0)\cap \O)}.$$
\end{rem}

Second, we consider the asymptotic estimates on $u$. For $R>1,\,x\in \O$, we define 
\begin{equation}
    u_R(x):=R^\frac{n-p}{p-1}u(R x).\non
\end{equation}
Then, from $(\ref{e109})$, we have
\begin{equation}\label{e124}
    \left\{
    \begin{array}{lr}
	\Delta_p u_R+d R^{-\frac{p}{p-1}}u_R^{p^*-1}=0 & , \T{in} \,\,\O\\
	|D u_R|^{p-2}\frac{\partial u_R}{\partial t}=\mp R^{-1}u_R^q & , \T{on} \,\,\p\O
	\end{array}
 \right.     
\end{equation}
Denote by $U:=\{1<|x|<8\}$, $W_1:=\{2<|x|<7\}$ and $W:=\{3<|x|<6\}$.
\begin{lem}\label{l6}
Suppose $u\in W^{1,p}\cap L^\infty(U\cap \O )$ is a positive solution to
    \begin{equation}\label{e125}
    \left\{
    \begin{array}{lr}
	\Delta_p u=-g(x)u^{p-1} & ,\T{in} \,\,\O\cap U\\
	|D u|^{p-2}\frac{\partial u}{\partial t}=-f(x) u^{p-1} & ,\T{on} \,\,\p\O\cap U
	\end{array}
 \right.     
\end{equation}
where $|g(x)|\le \mu |u(x)|^{p^*-p}$ in $U\cap \O$, $|f(x)|\le \l|u(x)|^{p_*-p}$ in $U\cap\p\O$ and $\mu,\l\ge 0$.\\
Then there exist $\d>0$ with the following property: let $c_n\in(0,1),\l,\mu$ be such that 
\begin{align*}
    \T Vol (B_{c_n})&\le 1,\,\,\T Area (B_{c_n}\cap \p\O)\le 1,
    \,\,\l \|u\|_{L^{p_*}(U\cap\p\O)}^\frac{p(p-1)}{n-p}\le \d,\\
    &\l \|u\|_{L^{p^*}(U\cap\O)}^\frac{p(p-1)}{n-p}\le \d,\,\,\mu \|u\|_{L^{p^*}(U\cap\O)}^\frac{p^2}{n-p}\le \d,
\end{align*}
   then for any $t>0$, $0<\rho\le c_n$, $x_0\in\O\cap W$
   $$\|u\|_{L^\infty(B_{\rho/2}(x_0)\cap \O)}\le C\|u\|_{L^t(B_{\rho}(x_0)\cap \O)}$$
   where $C$ depend only on $n,p,\rho,t$; and $\d$ depend only on $n,p$.
\end{lem}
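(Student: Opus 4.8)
The plan is to run the Moser iteration of Lemma \ref{l2}, carrying the extra interior term produced by $g$ exactly as the volume nonlinearity $f(x,u)$ was carried in Lemma \ref{l5}. First I would test $(\ref{e125})$ with $\vp=\eta^p u^{\b}$ for $\eta\in C_c^\infty(U)$ and $\b\ge1$, and, using H\"older's and Young's inequalities as in $(\ref{e23})$, obtain
\begin{align}
\int_\O\big|\n(\eta\,u^{\frac{\b+p-1}{p}})\big|^p\,dx\le C_1\bigg(\int_\O|\n\eta|^p u^{\b+p-1}\,dx+\b^p\int_{\p\O}|f(x)|\eta^p u^{\b+p-1}\,d\s+\b^p\int_\O|g(x)|\eta^p u^{\b+p-1}\,dx\bigg),\non
\end{align}
the last term being the only new feature. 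I would control the boundary term through $\|f\|_{L^{(n-1)/(p-1)}(U\cap\p\O)}\le\l\|u\|_{L^{p_*}(U\cap\p\O)}^{p(p-1)/(n-p)}\le\d$, H\"older and the Sobolev trace inequality precisely as in $(\ref{e24})$, and the new interior term through $\|g\|_{L^{n/p}(U\cap\O)}\le\mu\|u\|_{L^{p^*}(U\cap\O)}^{p^2/(n-p)}\le\d$, H\"older and the Sobolev inequality of Observation \ref{o2}, as in $(\ref{e116})$, namely
\begin{align}
\int_\O|g(x)|\eta^p u^{\b+p-1}\,dx\le\|g\|_{L^{n/p}(U\cap\O)}\,\big\|\eta\,u^{\frac{\b+p-1}{p}}\big\|_{L^{p^*}(\O)}^p\le C_{n,p}\,\d\int_\O\big|\n(\eta\,u^{\frac{\b+p-1}{p}})\big|^p\,dx.\non
\end{align}

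Next, with $\b=p_*-p+1$ fixed, I would choose $\d$ small (depending only on $n,p$) so that the two $\d$-terms are absorbed on the left; then the Sobolev trace inequality together with Observation \ref{o2} and a cut-off supported in $U$ with $\eta\equiv1$ on $W_1$ give uniform bounds $\|u\|_{L^{p_* a}(\O\cap W_1)}+\|u\|_{L^{p_* b}(\p\O\cap W_1)}\le C$, hence $\|f\|_{L^{\frac{n-1}{p-1}b}(W_1\cap\p\O)}+\|g\|_{L^{\frac{n}{p}b}(W_1\cap\O)}\le C$, in analogy with $(\ref{e25})$ and $(\ref{e117})$. Then, letting $\b$ vary with $\operatorname{supp}\eta\subset W_1$, I would estimate the boundary term by H\"older into $L^{t_1}(\p\O)$ with $1<t_1<b$ and the interior term into $L^{t_2}(\O)$ with $1<t_2<a$ (the inequality $p>1$ being what guarantees $t_2<a$), and apply Observation \ref{o3} (after rescaling its free parameter by $\b^{-p}$) together with the Sobolev trace inequality, the Sobolev inequality and Observation \ref{o2} to move both terms back onto $\int_\O|\n(\eta\,u^{(\b+p-1)/p})|^p$ with a coefficient $\e\b^{-p}$, plus lower order $L^1$ terms carrying a fixed power $\b^{\s}$ of $\b$, exactly as in $(\ref{e27})$, $(\ref{e120})$ and $(\ref{e121})$. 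Choosing $\e$ small gives an inequality of the form $(\ref{e122})$--$(\ref{e123})$, and a standard Moser iteration over a shrinking family of balls $B_r(x_0)\subset B_\rho(x_0)$, as in \cite[Theorem 4.1, Remark 4.2]{HL00}, yields $\|u\|_{L^\infty(B_{\rho/2}(x_0)\cap\O)}\le C\|u\|_{L^{p_*}(B_\rho(x_0)\cap\O)}$ and then, by Observation \ref{o3}, the stated inequality for every $t>0$.

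The only genuinely new point compared with Lemma \ref{l2} is the interior term $g(x)u^{p-1}$, and the main thing to verify is that the critical exponent $p^*=pa$ lets it be handled on exactly the same footing as the boundary term: writing $u^{\b+p^*-1}=u^{\b+p-1}\,u^{p^*-p}$ with $u^{p^*-p}\in L^{n/p}$ small is what makes the absorption work, and the interpolation exponents $\s_1=\frac{p(b-t_1)}{b(t_1-1)}$, $\s_2=\frac{p(a-t_2)}{a(t_2-1)}$ stay bounded, so the iteration constant does not degenerate; I expect no obstacle beyond this bookkeeping. In the application one takes $g(x)=dR^{-p/(p-1)}u_R^{p^*-p}$ and $f(x)=\mp R^{-1}u_R^{p_*-p}$, for which the required smallness of $\mu\|u\|_{L^{p^*}}^{p^2/(n-p)}$, $\l\|u\|_{L^{p_*}}^{p(p-1)/(n-p)}$ and $\l\|u\|_{L^{p^*}}^{p(p-1)/(n-p)}$ follows, after the scaling computations, from the finite-energy decay $\|u\|_{L^{p^*}(\O\cap\{R<|x|<8R\})}\to0$ and $\|u\|_{L^{p_*}(\p\O\cap\{R<|x|<8R\})}\to0$ as $R\to\infty$.
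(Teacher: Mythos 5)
Your proposal follows essentially the same route as the paper's proof: the same test function $\vp=\eta^p u^\b$, absorption of the boundary term via $\|f\|_{L^{(n-1)/(p-1)}}\le\d$ with the trace inequality and of the new interior term via $\|g\|_{L^{n/p}}\le\d$ with Observation \ref{o2}, the same two-step scheme (first fix $\b=p_*-p+1$ to upgrade the integrability of $f$ and $g$ on $W_1$, then let $\b$ vary with H\"older into $L^{t_1}(\p\O)$ and $L^{t_2}(\O)$ plus Observation \ref{o3}), and the same Moser iteration from \cite{HL00} to conclude. The argument is correct as written (note $p_*a=p^*b$, so your interior integrability bound agrees with the paper's), so nothing further is needed.
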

\begin{proof}
    The proof of Lemma \ref{l6} is similar to the one of Lemma \ref{l2} and we only point out hereafter the changes required for the solution to $(\ref{e125})$.\\
We argue as Lemma \ref{l2}. Let $\eta\in C_c^\infty(U),\,\b\ge 1$ and use $\vp=\eta^p u^\b$ as a test function in $(\ref{e125})$, then we get
\begin{align}
     \b\int_\O |\n u|^p&\,\eta^p u^{\b-1} d x=-p\int_\O |\n u|^{p-2}\n u \cdot\n\eta\,\eta^{p-1} u^\b dx\non\\&+\int_{\p\O} f(x)\eta^p  u^{\b+p-1} d\s+\int_{\O} g(x)\eta^p  u^{\b+p-1} d x\non
     \end{align}
So $(\ref{e23})$ becomes
\begin{align}
     &\int_\O |\n(\eta  u^{\frac{\b+p-1}{p}})|^p d x\le C_1\bigg(\int_\O |\n \eta|^p u^{\b+p-1} dx\non\\&+\b^p\int_{\p\O} |f(x)|\eta^p  u^{\b+p-1} d\s+\b^p\int_{\O} |g(x)|\eta^p  u^{\b+p-1} d x\bigg),
     \label{e126}
\end{align}
where $C_1$ always depend only on $p$.\\
Note that 
\begin{align*}
&\|f\|_{L^{\frac{n-1}{p-1}}(U\cap \p\O)}\le \l \|u\|_{L^{p_*}(U\cap\p\O)}^\frac{p(p-1)}{n-p}\le \d,\\
&\|g\|_{L^{\frac{n}{p}}(U\cap \O)}\le \mu \|u\|_{L^{p^*}(U\cap\O)}^\frac{p^2}{n-p}\le \d.
\end{align*}
So, it follows from H\"{o}lder's inequality, Sobolev trace inequality and Sobolev inequality that $(\ref{e24})$ becomes
\begin{align}  
\int_{\p\O}|f(x)|\eta^p  u^{\b+p-1} d\s
\le& C_{n,p} \,\d\int_\O |\n(\eta u^{\frac{\b+p-1}{p}})|^p d x,\label{e127}\\
\int_{\O}|g(x)|\eta^p  u^{\b+p-1} d x\le
&\|g\|_{L^{\frac{n}{p}}(U\cap\O)}\,\|\eta u^{\frac{\b+p-1}{p}}\|_{L^{p^*}(\O)}^p\non\\
\le& C_{n,p} \,\d\int_\O |\n(\eta u^{\frac{\b+p-1}{p}})|^p d x.\label{e128}
\end{align}
First, we fix $\b=p_*-p+1$ and choose $\d>0$ small enough which depends only on $n,p$. Then, from $(\ref{e126})$, $(\ref{e127})$ and $(\ref{e128})$, we have
    \begin{align}
    \|\eta^p u^{p_*}\|_{L^a(\O)}+\|\eta^p u^{p_*}\|_{L^b(\p\O)}\le
    C_2
    \int_\O u^{p_*}|\n \eta|^p d x,\non
\end{align}
where $C_2$ always depend only on $n,p$.\\
Here we choose $\eta$ such that $\eta=1$ in $W_1$, $\eta=0$ outside $U$ and $|\n \eta|\le C_n$, then by H\"{o}lder's inequality we have
\begin{align}
        &\| u\|_{L^{p_* b}(\p\O\cap W_1)}\le C_2\| u\|_{L^{p_*}(\O\cap U)}\le C_2\| u\|_{L^{p^*}(\O\cap U)},\non\\
        &\| u\|_{L^{p^* b}(\O\cap W_1)}\le C_2\| u\|_{L^{p_*}(\O\cap U)}\le C_2\| u\|_{L^{p^*}(\O\cap U)}.\non
    \end{align}
    Therefore, we get
\begin{align}
        \|f\|_{L^{\frac{n-1}{p-1}b}(W_1\cap \p\O)}\le
        \l \|u\|_{L^{p_* b}(W_1\cap\p\O)}^\frac{p(p-1)}{n-p}\le C_2 \l \|u\|_{L^{p^*}(U\cap\O)}^\frac{p(p-1)}{n-p}\le C_2,\label{e129}\\
        \|g\|_{L^{\frac{n}{p}b}(W_1\cap \O)}\le
        \mu \|u\|_{L^{p^* b}(W_1\cap\O)}^\frac{p^2}{n-p}\le C_2 \mu \|u\|_{L^{p^*}(U\cap\O)}^\frac{p^2}{n-p}\le C_2.\label{e130}
    \end{align}
Second, let $\b$ vary and $\operatorname{supp}\eta\subset W_1$, we come back to $(\ref{e126})$. Like $(\ref{e26})$, we have
\begin{align}  
\int_{\p\O}|f(x)|\eta^p  u^{\b+p-1} d\s\le
&\|f\|_{L^{\frac{n-1}{p-1}b}(W_1\cap \p\O)}\,\|\eta^p u^{\b+p-1}\|_{L^{t_1}(\p\O)}\non\\
\le& C_2\|\eta^p u^{\b+p-1}\|_{L^{t_1}(\p\O)},\label{e131}\\
\int_{\O}|g(x)|\eta^p  u^{\b+p-1} d x\le
&\|g\|_{L^{\frac{n}{p}b}(W_1\cap \O)}\,\|\eta^p u^{\b+p-1}\|_{L^{t_2}(\O)}\non\\
\le& C_2\|\eta^p u^{\b+p-1}\|_{L^{t_2}(\O)},\label{e132}
\end{align}
where $\frac{1}{t_1}+\frac{b-1}{b^2}=1,\,\frac{1}{t_2}+\frac{a-1}{a b}=1\RA t_1\in(1,b),\,t_2\in(1,a)$.\\
Then, it follows from Observation \ref{o3}, Sobolev trace inequality and Sobolev inequality that $(\ref{e27})$ becomes
\begin{align}  
\int_{\p\O}|f(x)|\eta^p  u^{\b+p-1} d\s
\le& \e \b^{-p} \|\eta^p u^{\b+p-1}\|_{L^{b}(\p\O)}+ C_3\b^{\s_1}\|\eta^p u^{\b+p-1}\|_{L^{1}(\p\O)}\non\\
\le C_{n,p}\e &\b^{-p} \int_\O |\n(\eta u^{\frac{\b+p-1}{p}})|^p d x+ C_3\b^{\s_1}\|\eta^p u^{\b+p-1}\|_{L^{1}(\p\O)},
\label{e133}\\
\int_{\O}|g(x)|\eta^p  u^{\b+p-1} d\s
\le& \e \b^{-p} \|\eta^p u^{\b+p-1}\|_{L^{a}(\O)}+ C_3\b^{\s_2}\|\eta^p u^{\b+p-1}\|_{L^{1}(\O)}\non\\
\le C_{n,p}\e &\b^{-p} \int_\O |\n(\eta u^{\frac{\b+p-1}{p}})|^p d x+ C_3\b^{\s_2}\|\eta^p u^{\b+p-1}\|_{L^{1}(\O)},
\label{e134}
\end{align}
where $\s_1=\frac{p(b-t_1)}{b(t_1-1)},\s_2=\frac{p(a-t_2)}{a(t_2-1)}$ and $C_3\sim n,p,\e$.\\
Then we choose $\e>0$ small enough depending only on $n,p$. It follows from $(\ref{e133}),\,(\ref{e134})$, $(\ref{e126})$ and $\b\ge 1$ that
\begin{align}
\int_\O |\n(\eta  u^{\frac{\b+p-1}{p}})|^p &d x\le C_2\b^\s\bigg(\int_\O |\n \eta|^p u^{\b+p-1} d x\non\\+&\int_{\p\O} \eta^p  u^{\b+p-1} d\s+\int_{\O} \eta^p  u^{\b+p-1} d x\bigg),
     \label{e135}
\end{align}
where $\s=\max\{\s_1,\s_2\}$.\\
Since $\rho\le c_n,\,\T Vol (B_{c_n})\le 1,\,\T Area (B_{c_n}\cap \p\O)\le 1$, here we choose $\eta$ such that $\eta=1$ in $B_{r}(x_0)$, $\eta=0$ outside $B_{R}(x_0)$ and $|\n \eta|\le C_n/(R-r)$, where $0<r<R<\rho$. Then by H\"{o}lder's inequality, $(\ref{e30})$ becomes
\begin{align}
    \|u^{\b+p-1}\|_{L^b(\O\cap B_r(x_0))}+\|u^{\b+p-1}&\|_{L^b(\p\O\cap B_r(x_0))}\le
    \non\\C_3\b^\s\bigg(\frac{1}{(R-r)^p}\| u^{\b+p-1}\|_{L^1(\O\cap B_R(x_0))}
+&\| u^{\b+p-1}\|_{L^1(\p\O\cap B_R(x_0))}\bigg)\label{e136}
\end{align}
where $C_3$ depends only on $n,p,\rho$.\\
Finally, arguing as Lemma \ref{l2}, we finish the proof.

\end{proof}

    \begin{rem}\label{r2.6}
    In our setting, $\l=R^{-1}$, $\mu=d R^{-\frac{p}{p-1}}$, $g(x)=d R^{-\frac{p}{p-1}}u^{p^*-p}$ and $f(x)=\pm R^{-1}u^{p_*-p}$. Arguing as Remark \ref{r2.4}, we obtain that there exists $R_2>1$ such that
$$
|x|^\frac{n-p}{p}u(x)+|x|^\frac{n}{p}|\n u(x)|\le C\,\,\T{in}\,\,\O\cap \{|x|>R_2\}
$$

\end{rem}

\begin{lem}\label{l7}
    Any solution of $(\ref{e109})$ belongs to $L^{(p-1)a,\infty}(\O)$ and $L^{(p-1)b,\infty}(\p\O)$.
\end{lem}

\begin{proof}
    Adapting the proof of Lemma \ref{l3}, we obtain this lemma.\\
For every $h>0$, we define $u_h=\min\{u,h\}$, $W_h=\{u>h\}$.
Similar to Lemma \ref{l3}, we use $\vp=u_h$ as a test function in $(\ref{e109})$, then we get
\begin{align}
    \int_{\O} |\n u_h|^p d x=&\pm\int_{\p\O} u^q u_h d\s+d\int_{\O} u^{p^*-1} u_h d x\non\\
    =&\pm\int_{\p\O\cap W_h^c} u^{p_*} d\s\pm h\, \int_{\p\O\cap W_h} u^q d\s\non\\
    +&d\int_{\O\cap W_h^c} u^{p^*} d x+d h\, \int_{\O\cap W_h} u^{p^*-1} d x\label{e137}
\end{align}
On the one hand, straightforward computations give
\begin{align}
    \int_{\p\O\cap W_h^c} u^{p_*} d\s=\int_{\p\O}u_h^{p_*} d \s-h^{p_*}\,\H^{n-1}(W_h\cap\p\O)\label{e138}\\
    \int_{\O\cap W_h^c} u^{p^*} d x=\int_{\O}u_h^{p^*} d x-h^{p^*}\,\H^{n}(W_h\cap\O)\label{e139}
\end{align}
On the other hand, using $(\ref{tool1})$ we get 
\begin{align}
    \int_{\p\O\cap W_h} u^q d\s
    =&h^q \H^{n-1}(W_h\cap\p\O)+q\int_h^\infty s^{q-1}\H^{n-1}(\{u>s\}\cap\p\O)\, d s\label{e140}\\
    \int_{\O\cap W_h} u^{p^*-1} d x
    =&h^{p^*-1} \H^{n}(W_h\cap\O)+(p^*-1)\int_h^\infty s^{p^*-2}\H^{n}(\{u>s\}\cap\O)\, d s\label{e141}
\end{align}
Then, thanks to the Observation \ref{o1} and Remark \ref{r2.1.5}, we have
\begin{align}
\bigg(\int_{\O}&u_h^{p^*} d x\bigg)^\frac{1}{a}+\bigg(\int_{\p\O}u_h^{p_*} d \s\bigg)^\frac{1}{b}\le C_1 \int_{\O} |\n u_h|^p d x\non\\
&\le C_1\bigg(\int_{\p\O}u_h^{p_*} d \s+h\int_h^\infty s^{q-1}\H^{n-1}(\{u>s\}\cap\p\O)\, d s\non\\
&+d\int_{\O}u_h^{p^*} d x+d\, h\int_h^\infty s^{p^*-2}\H^{n}(\{u>s\}\cap\O)\, d s\bigg)\label{e142}
\end{align}
where $C_1$ always depend only on $n,p$.\\
Since $|u_h|\le |u|,\,u\in L^{p_*}(\p\O)\cap L^{p^*}(\O),\,u_h\ra 0$ as $h\ra 0$, by dominated convergence theorem, we get
\begin{equation}
\int_{\O}u_h^{p^*} d x\ra0,\,
\int_{\p\O}u_h^{p_*} d \s\ra0\,\,\,\T{as}\,\,\,h\ra 0.\label{e143}
\end{equation}
It follows from $(\ref{e142})$ and $(\ref{e143})$ that for small $h>0$, we have
\begin{align}
&\bigg(\int_{\O}u_h^{p^*} d x\bigg)^\frac{1}{a}+\bigg(\int_{\p\O}u_h^{p_*} d \s\bigg)^\frac{1}{b}\non\\
&\le C_2 h\bigg(\int_h^\infty s^{q-1}\H^{n-1}(\{u>s\}\cap\p\O)\, d s+ \int_h^\infty s^{p^*-2}\H^{n}(\{u>s\}\cap\O)\, d s\bigg)\label{e144}
\end{align}
where $C_2$ always depend only on $n,p,d$.\\
We now define 
\begin{align*}
   & G(h):=\int_h^\infty s^{q-1}\H^{n-1}(\{u>s\}\cap\p\O)\, d s,\\
    &F(h):=\int_h^\infty s^{p^*-2}\H^{n}(\{u>s\}\cap\O)\, d s.
\end{align*}
Then we have $G'(h)=-h^{q-1}\,\H^{n-1}(W_h\cap\p\O)\le 0,F'(h)=-h^{p^*-2}\,\H^{n}(W_h\cap\O)\le 0$ when $h>0$, which implies $G(0):=\D\operatorname*{lim}_{h\ra 0^+}G(h)$ and $F(0):=\D\operatorname*{lim}_{h\ra 0^+}F(h)$ exists (but may be infinity). In the following, we will show that $G(0)+F(0)<\infty$. \\
It follows from $(\ref{e138})$ and $(\ref{e139})$ that as $h\ra 0$, we have
\begin{align*}
h^{p^*}\,\H^{n}(W_h\cap\O)\le \int_{\O}u_h^{p^*} d x\ra 0,\,\,
h^{p_*}\,\H^{n-1}(W_h\cap\p\O)\le\int_{\p\O}u_h^{p_*} d \s\ra0
\end{align*}
Then from $(\ref{e144})$, for small $h>0$, we have
\begin{align}
&\bigg(h^{p^*}\,\H^{n}(W_h\cap\O)\bigg)^\frac{1}{b}+\bigg(h^{p_*}\,\H^{n-1}(W_h\cap\p\O)\bigg)^\frac{1}{b}\non\\
&\le\bigg(h^{p^*}\,\H^{n}(W_h\cap\O)\bigg)^\frac{1}{a}+\bigg(h^{p_*}\,\H^{n-1}(W_h\cap\p\O)\bigg)^\frac{1}{b}\non\\
&\le C_2 h\bigg(\int_h^\infty s^{q-1}\H^{n-1}(\{u>s\}\cap\p\O)\, d s+ \int_h^\infty s^{p^*-2}\H^{n}(\{u>s\}\cap\O)\, d s\bigg)\label{e145}
\end{align}
That is, for small $h>0$, we have
\begin{align}
    &(-F'(h)h^2)^\frac{1}{b}+(-G'(h)h^2)^\frac{1}{b}\le C_2 h(G(h)+F(h))\non\\
    \RA &-(F+G)'(h)\le C_2 h^{b-2}(G(h)+F(h))^b\non\\
    \RA &((F(h)+G(h))^{1-b})'\le C_2 h^{b-2}\label{e146}
\end{align}
By integrating $(\ref{e146})$, we obtain
\begin{align}\label{e147}
    (F(h)+G(h))^{1-b}-(F(0)+G(0))^{1-b}\le C_2 h^{b-1}\quad\T{for small}\,\,h>0.
\end{align}
Arguing as Lemma \ref{l3}, we have 
\begin{align}
    h G(h)\ra 0,\,h F(h)\ra 0\,\,\T{as}\,\,h\ra 0.\label{e148}
\end{align}
It follows from $(\ref{e147})$ and $(\ref{e148})$ that $G(0),F(0)<\infty$. By using $(\ref{e145})$ and $G,F$ is non-increasing, for small $h>0$ we get
\begin{align}
\bigg(h^{(p-1)a}\,\H^{n}(W_h\cap\O)\bigg)^\frac{1}{a}+\bigg(h^{(p-1)b}\,\H^{n-1}(W_h\cap\p\O)\bigg)^\frac{1}{b}\le C_1 (G(0)+F(0)).\non
\end{align}
Finally, arguing as Lemma \ref{l3}, we finish the proof.

\end{proof}

\begin{lem}\label{l8}
	Suppose $u\in W^{1,p}\cap L^\infty(U\cap \O )$ is a positive solution to
    \begin{equation}\label{e149}
    \left\{
    \begin{array}{lr}
	\Delta_p u=-g(x) u^{p-1} & ,\T{in} \,\,\O\cap U\\
	|D u|^{p-2}\frac{\partial u}{\partial x_n}=-f(x) u^{p-1} & ,\T{on} \,\,\p\O\cap U
	\end{array}
 \right.     
\end{equation}
where $|g(x)|\le \L$ in $U\cap\O$, $|f(x)|\le \l$ in $U\cap\p\O$ and $\L,\l\ge 0$.\\
Then there exist $\d,p_1>0$ with the following property: let $c_n\in(0,1),\l$ be such that 
\begin{align*}
    \T Vol (B_{c_n})\le 1,\,\,\T Area (B_{c_n}\cap \p\O)\le 1,\,\,\l\le\d,
\end{align*}
   then for any $0<\rho\le c_n$, $x_0\in\O\cap W$, we have 
   $$\inf_{B_{\rho/2}(x_0)\cap \O} u\ge c\|u\|_{L^{p_1}(B_{\rho}(x_0)\cap \O)},$$
   where $\d,\,p_1$ depends only on $n,p$; and $c$ depends only on $n,p,\rho,\L$.
\end{lem}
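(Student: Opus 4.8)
The plan is to repeat the three-step argument of Lemma~\ref{l4} --- a weak Harnack inequality obtained from a Moser iteration for $u^{-1}$, combined with a $\mathrm{BMO}$ bound for $\log u$ and the John--Nirenberg inequality --- inserting at each step the treatment of the bounded interior zeroth order term $-g(x)u^{p-1}$ exactly the way the interior term was handled in Lemma~\ref{l6}. Since $|g|\le\Lambda$ is only a plain $L^\infty$ bound, and not a criticality-type smallness hypothesis, $g$ will contribute only lower order $L^1$ quantities that are carried along the iteration, at the cost of letting the constants depend on $\Lambda$; the critical boundary term $f\,u^{p-1}$ is neutralised, as in Lemma~\ref{l4}, by the smallness $\lambda\le\delta$ together with Observation~\ref{o3}.

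For Step~1 (the bound $\inf_{B_{\rho/2}(x_0)\cap\Omega}u\ge c\,\|u^{-1}\|_{L^t(B_\rho(x_0)\cap\Omega)}^{-1}$ for every $t>0$) I would set $w=u^{-1}$. Testing the weak form of $(\ref{e149})$ with $w^{2(p-1)}\phi$, $\phi\ge 0$, and discarding the favourable term $-2(p-1)\int_\Omega|\nabla w|^p w^{-1}\phi$, one obtains
\begin{equation*}
\int_\Omega|\nabla w|^{p-2}\nabla w\cdot\nabla\phi\,dx\le\lambda\int_{\partial\Omega}w^{p-1}\phi\,d\sigma+\Lambda\int_\Omega w^{p-1}\phi\,dx,\qquad\phi\ge 0 .
\end{equation*}
Now I would run the Moser iteration of Lemma~\ref{l6} with $\phi=\eta^p w^\beta$: the boundary term is absorbed through the Sobolev trace inequality in the first step (with $\beta=p_*-p+1$ and $\delta$ small, dimensionally) and, for varying $\beta$, through Observation~\ref{o3} and the Sobolev trace inequality; the interior term $\Lambda\int_\Omega\eta^p w^{\beta+p-1}$ is kept as an $L^1$ quantity, so the iterated estimate has the same shape as $(\ref{e136})$ with the per-step constant now depending on $n,p,\rho,\Lambda$. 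Telescoping gives $\|w\|_{L^\infty(B_{\rho/2}(x_0)\cap\Omega)}\le C\|w\|_{L^t(B_\rho(x_0)\cap\Omega)}$, i.e.\ the asserted lower bound on $\inf u$.

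For Step~2 ($\log u\in\mathrm{BMO}$) I would put $v=\log u-\frac{1}{|\Omega\cap B_\rho(x_0)|}\int_{\Omega\cap B_\rho(x_0)}\log u\,dx$ and test $(\ref{e149})$ with $\varphi=u^{-(p-1)}\eta^p$. Arguing as in $(\ref{e61})$, and using Young's inequality on the gradient term, one gets
\begin{equation*}
\int_\Omega|\nabla v|^p\eta^p\,dx\le C\Big(\lambda\int_{\partial\Omega}\eta^p\,d\sigma+\Lambda\int_\Omega\eta^p\,dx+\int_\Omega|\nabla\eta|^p\,dx\Big),
\end{equation*}
so that for every ball $B_r(y)\subset B_\rho(x_0)$, choosing $\eta$ equal to $1$ on $B_r(y)$, supported in $B_{2r}(y)$ with $|\nabla\eta|\le C/r$,
\begin{equation*}
\int_{\Omega\cap B_r(y)}|\nabla v|^p\,dx\le C(\lambda r^{n-1}+\Lambda r^n+r^{n-p})\le C(1+\Lambda)r^{n-p},
\end{equation*}
using $r\le c_n\le 1$ and $1<p<n$. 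The Poincar\'e inequality then yields $\int_{\Omega\cap B_r(y)}|v-(v)_r|\,dx\le M r^n$ with $M$ depending on $n,p,\Lambda$, hence $v\in\mathrm{BMO}(\Omega\cap B_\rho(x_0))$. Finally, as in Step~3 of Lemma~\ref{l4}, the John--Nirenberg inequality (\cite[Theorem~3.5]{HL00}, applicable to the half-ball $\Omega\cap B_\rho(x_0)$) gives $\int_{\Omega\cap B_\rho(x_0)}e^{p_1|v|}\,dx\le C$ for a suitable $p_1>0$, whence $\|u^{-1}\|_{L^{p_1}(B_\rho(x_0)\cap\Omega)}\|u\|_{L^{p_1}(B_\rho(x_0)\cap\Omega)}\le C$; combining this with Step~1 for $t=p_1$ finishes the proof.

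The main obstacle is bookkeeping rather than conceptual: once the genuinely critical boundary term is killed by $\lambda\le\delta$ and Observation~\ref{o3}, one must verify that the new interior term enters only as a bounded multiplicative perturbation of the constants, so that the Moser iteration in Step~1 still closes with finite product and the $\mathrm{BMO}$ seminorm in Step~2 stays finite. The delicate point is the $\Lambda r^n$ contribution in the gradient estimate for $\log u$, which is controlled by $C(1+\Lambda)r^{n-p}$ precisely because $r\le 1$ and $p<n$; all other estimates are identical to those in Lemmas~\ref{l4} and~\ref{l6}.
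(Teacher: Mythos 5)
Your proposal is correct and follows essentially the same route as the paper: the same three steps (Moser iteration for $w=u^{-1}$ after testing with $w^{2(p-1)}\phi$, a BMO bound for $\log u$ via the test function $u^{-(p-1)}\eta^p$ with the interior term contributing $\Lambda r^n$, and John--Nirenberg), with the term $g u^{p-1}$ carried along as a bounded perturbation whose effect is only to make the constants depend on $\Lambda$. The only cosmetic difference is that you invoke Observation \ref{o3} to absorb the boundary term for varying $\beta$, whereas the paper simply keeps it as a boundary $L^1$ quantity in the iteration as in $(\ref{e60})$; both work since $|f|\le\lambda\le\delta$ is a plain smallness bound here.
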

\begin{proof}
The proof of Lemma \ref{l8} is similar to the one of Lemma \ref{l4} and we only point out hereafter the changes required for the solution to $(\ref{e149})$.\\
We argue as Lemma \ref{l4}.
Set $w=u^{-1}$. From $(\ref{e149})$ we have
    \begin{equation}
   \int_\O |\n u|^{p-2}\n u\cdot\n \vp d x=\int_{\p\O}f(x)u^{p-1} \vp d\s+\int_{\O}g(x)u^{p-1} \vp d x,\,\,\,\forall \vp\in C_c^\infty(U).
    \label{e150}  
   \end{equation}
\textbf{Step 1:}\textit{ we prove that $\D\inf_{B_{\rho/2}(x_0)\cap \O} u\ge c\bigg(\|u^{-1}\|_{L^t(B_{\rho}(x_0)\cap \O)}\bigg)^{-1}$ for any $t>0$}.\\
Let $\phi\in C_c^\infty(U)$ and use $\vp=w^{2(p-1)}\phi$ as a test function in $(\ref{e150})$, we obtain
\begin{align}
   \int_\O |\n w|^{p-2}\n w\cdot\n \phi d x=&-\int_{\p\O}f(x)w^{p-1} \phi d\s-2(p-1)\int_\O |\n w|^{p} w^{-1} \phi d x\non\\
   &-\int_{\O}g(x)w^{p-1} \phi d x\non\\
      \le& -\int_{\p\O}f(x)w^{p-1} \phi d\s-\int_{\O}g(x)w^{p-1} \phi d x.
   \label{e151}
\end{align}
Then $(\ref{e59})$ becomes
\begin{align}
     \int_\O |\n(\eta  w^{\frac{\b+p-1}{p}})|^p d x&\le C_1\bigg(\int_\O |\n \eta|^p w^{\b+p-1} dx+\l\b^p\int_{\p\O} \eta^p  w^{\b+p-1} d\s\non\\
     &+\L\b^p\int_\O |\n \eta|^p w^{\b+p-1} dx\bigg),
     \label{e152}
\end{align}
where $C_1$ always depend only on $p$.\\
Let $C_2$ always depends only on $n,p,\L$; and $C_3$ always depends only on $n,p,\rho,\L$.
Then we argue as Lemma \ref{l4} and finish the proof of step 1.\\
Set $$v=\log u-\frac{\int_{\O\cap B_{\rho}(x_0)}\log u \,d x}{|\O\cap B_{\rho}(x_0)|}.$$
\textbf{Step 2:}\textit{ we prove that $v\in BMO$}.\\
Using $\vp=u^{-(p-1)}\eta^p$ as a test function in $(\ref{e150})$, then $(\ref{e61})$ becomes
\begin{align}
   (p-1)\int_\O |\n v|^{p} \eta^p d x=&-\int_{\p\O}f(x) \eta^p d\s-\int_{\O}g(x) \eta^p d x\non\\
   &-p\int_\O |\n v|^{p-2}\n v\cdot\n \eta \eta^{p-1} d x\non\\
     \RA \int_\O |\n v|^{p} \eta^p d x\le& C_1\bigg(\l\int_{\p\O}\eta^p d\s+\int_\O |\n \eta|^{p}d x+\L\int_{\O}\eta^p d x\bigg).
   \label{e153}
\end{align}
For any ball $B_{r}(y)\subset B_{\rho}(x_0)$, we choose $\eta$ such that $\eta=1$ in $B_{r}(y)$, $\eta=0$ outside $B_{2 r}(y)$ and $|\n \eta|\le C_n/r$. Then by Poincare inequality and $(\ref{e153})$, we have
\begin{align}
   \int_{\O\cap B_r(y)} | v-(v)_r|^{p} d x&\le C_2 r^p\int_{\O\cap B_r(y)} |\n v|^{p} d x\non\\
   &\le C_2 r^p(\l r^{n-1}+r^{n-p}+\L r^{n})\non\\
   &\le C_2 r^n,\non
\end{align}
where $(v)_r:=\frac{\int_{\O\cap B_{r}(y)}v d x}{|\O\cap B_{r}(y)|}$.\\
Then we argue as Lemma \ref{l4} and finish the proof of step 2.\\
\textbf{Step 3:}\textit{ for any $p_1>0$, we have
   $\inf_{B_{\rho/2}(x_0)\cap \O} u\ge c\|u\|_{L^{p_1}(B_{\rho}(x_0)\cap \O)}.$}\\
The proof of step 3 is the same as Lemma \ref{l4}.

\end{proof}
Using Lemma \ref{l7}, Lemma \ref{l8} instead of Lemma \ref{l3}, Lemma \ref{l4} and arguing as Corollary \ref{cor1}, we obtain
\begin{cor}\label{cor2}
    Suppose $u$ is a solution of $(\ref{e124})$. Then there exists $R_3>1$ such that
   $$\sup_{W\cap \O} u_R\le C\inf_{W\cap \O} u_R\quad\T{for}\,\,R\ge R_3,$$
   where $C$ depends only on $n,p$.
\end{cor}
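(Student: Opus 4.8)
The plan is to follow the proof of Corollary \ref{cor1} verbatim, substituting Lemma \ref{l7} and Lemma \ref{l8} for Lemma \ref{l3} and Lemma \ref{l4}, and Lemma \ref{l6} for Lemma \ref{l2}. First I would record, exactly as in the proof of Corollary \ref{cor1} (Subsection~2.2.1) but invoking Lemma \ref{l6} and Remark \ref{r2.6} in place of Lemma \ref{l2} and Remark \ref{r2.4}, that there is $R_2>1$ with $\|u_R\|_{L^\infty(W\cap\O)}\le C$ for every $R\ge R_2$, with $C$ independent of $R$. This is the only consequence of the upper--bound analysis that will be needed.

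Next I would read the rescaled equation $(\ref{e124})$ as an instance of $(\ref{e149})$ with $g(x)=dR^{-\frac{p}{p-1}}u_R^{p^*-p}$ and $f(x)=\pm R^{-1}u_R^{p_*-p}$. The $L^\infty$ bound above gives $|g(x)|\le\L_R$ and $|f(x)|\le\l_R$ on $W\cap\O$ with $\L_R\le CR^{-\frac{p}{p-1}}\to 0$ and $\l_R\le CR^{-1}\to 0$; in particular $\l_R\le\d$ for $R$ large, where $\d$ is the threshold in Lemma \ref{l8}. Hence Lemma \ref{l8} yields $p_1>0$ (depending only on $n,p$) and $R_3>1$ such that, for $R\ge R_3$ and all $0<\rho\le c_n$, $x_0\in\O\cap W$,
\[ \inf_{B_{\rho/2}(x_0)\cap\O}u_R\ \ge\ c\,\|u_R\|_{L^{p_1}(B_\rho(x_0)\cap\O)}. \]
Reading the same equation as an instance of $(\ref{e125})$ with $\l=R^{-1}$, $\mu=dR^{-\frac{p}{p-1}}$, and the same $f,g$, I would then check the smallness hypotheses of Lemma \ref{l6}: since $u\in D^{1,p}(\O)$, the norms $\|u\|_{L^{p^*}(\O\cap\{R<|x|<8R\})}$ and $\|u\|_{L^{p_*}(\p\O\cap\{R<|x|<8R\})}$ tend to $0$, and the scaling identities (as in Remark \ref{r2.4}) turn $\l\|u_R\|_{L^{p_*}(U\cap\p\O)}^{\frac{p(p-1)}{n-p}}$, $\l\|u_R\|_{L^{p^*}(U\cap\O)}^{\frac{p(p-1)}{n-p}}$ and $\mu\|u_R\|_{L^{p^*}(U\cap\O)}^{\frac{p^2}{n-p}}$ into $R$-independent multiples of these vanishing quantities, so all three are $\le\d$ for $R$ large. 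Lemma \ref{l6} with $t=p_1$ then gives $\sup_{B_{\rho/2}(x_0)\cap\O}u_R\le C\|u_R\|_{L^{p_1}(B_\rho(x_0)\cap\O)}$ for $R\ge R_3$.

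Combining the two estimates produces the local Harnack inequality $\sup_{B_{\rho/2}(x_0)\cap\O}u_R\le C\inf_{B_{\rho/2}(x_0)\cap\O}u_R$ for all $x_0\in\O\cap W$ and $R\ge R_3$, with $C=C(n,p)$ once $\rho=c_n$ is fixed. It remains to upgrade this to the global bound on $W\cap\O$: I would cover the compact set $\ov{W\cap\O}$ by finitely many half-balls $B_{c_n/2}(x_i)\cap\O$ with $x_i\in\ov{\O\cap W}$ whose doubled versions $B_{c_n}(x_i)$ lie in a fixed compact neighbourhood inside $\O\cap W_1$ (possible after shrinking $c_n$, since $W\Subset W_1$), and use connectedness of $W\cap\O$ together with the overlap of neighbouring half-balls to chain; after finitely many steps (controlled by $n,p$) one obtains $\sup_{W\cap\O}u_R\le C\inf_{W\cap\O}u_R$ with $C=C(n,p)$. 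The only genuinely delicate point is the bookkeeping in verifying the smallness of the rescaled coefficients; once the uniform $L^\infty$ bound on $u_R$ over $W\cap\O$ and the decay of the $D^{1,p}$-mass over far annuli are in hand, everything else is a routine adaptation of the argument of Corollary \ref{cor1}, and I do not expect any new obstacle.
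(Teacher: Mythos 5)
Your proposal is correct and follows essentially the same route as the paper, which itself proves Corollary \ref{cor2} by simply repeating the argument of Corollary \ref{cor1} with Lemma \ref{l6} and Lemma \ref{l8} (and the uniform $L^\infty$ bound of Remark \ref{r2.6}) replacing Lemma \ref{l2} and Lemma \ref{l4}. The verification of the smallness of the rescaled coefficients and the final covering argument are exactly as in the paper.
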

Therefore, using Lemma \ref{l7}, Lemma \ref{l8}, Corollary \ref{cor2} instead of Lemma \ref{l3}, Lemma \ref{l4}, Corollary \ref{cor1} and arguing as Proposition \ref{p1}, we obtain
\begin{prop}\label{p4}
   Let $n\ge 2,\,1<p<n,$ and let $u$ be a solution to $(\ref{e124})$. Then there exist  $C_0,\,C_1,\,\a>0$ such that 
   \begin{align*}
       u(x)\le &\frac{C_1}{1+|x|^\frac{n-p}{p-1}}\,,\,\,|\n u(x)|\le \frac{C_1}{1+|x|^\frac{n-1}{p-1}}\,\,\,\T{in} \,\,\,\O\\
       &u(x)\ge\frac{C_0}{1+|x|^\frac{n-p}{p-1}}\quad\T{in} \quad\O\\
       \big||x|^\frac{n-p}{p-1}u(x)&- \a\big|+\big||x|^\frac{n-1}{p-1}\n u(x)+\a\frac{n-p}{p-1}x\big|\ra 0\,\,\T{as}\,\,|x|\ra\infty
   \end{align*}
\end{prop}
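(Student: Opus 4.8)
The plan is to repeat the proof of Proposition \ref{p1} line by line, replacing Lemma \ref{l1}, Lemma \ref{l2}, Lemma \ref{l3}, Lemma \ref{l4} and Corollary \ref{cor1} by their counterparts Lemma \ref{l5}, Lemma \ref{l6}, Lemma \ref{l7}, Lemma \ref{l8} and Corollary \ref{cor2}, and to check at each step that the zeroth order term $d u^{p^*-1}$ is harmless: after the rescaling $u_R(x):=R^{\frac{n-p}{p-1}}u(Rx)$ the functions $u_R$ satisfy an equation of the form \eqref{e124}, and its coefficient $d R^{-\frac{p}{p-1}}$ (like $\mp R^{-1}$) tends to $0$ as $R\ra\infty$. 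First I would use Observation \ref{o1} together with Lemma \ref{l5}, applied with $s=\g=\L=0$ so that Remark \ref{r2.5} gives the clean conclusion $\|u\|_{L^\infty}\le C\|u\|_{L^t}$ on small balls, to deduce $\|u\|_{L^\infty(\O)}\le C$; the regularity theory of \cite{L88,L91} for $p$-Laplacian type equations then yields $u\in C^{1,\a}_{loc}(\ov\O)$. Exactly as in Proposition \ref{p1}, the three assertions then reduce to uniform estimates for $u_R$ on the fixed annuli $U=\{1<|x|<8\}$, $W_1=\{2<|x|<7\}$ and $W=\{3<|x|<6\}$.

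For the upper bounds I would apply Lemma \ref{l6} with $\l=R^{-1}$, $\mu=d R^{-\frac{p}{p-1}}$, $f(x)=\mp R^{-1}u_R^{p_*-p}$ and $g(x)=d R^{-\frac{p}{p-1}}u_R^{p^*-p}$. Since $u\in L^{p_*}(\p\O)\cap L^{p^*}(\O)$ (Observation \ref{o1} and Remark \ref{r2.1.5}), the tails $\|u\|_{L^{p_*}(\p\O\cap\{R<|x|<8R\})}$ and $\|u\|_{L^{p^*}(\O\cap\{R<|x|<8R\})}$ go to $0$, so the smallness hypotheses of Lemma \ref{l6} hold for $R$ large, giving $\|u_R\|_{L^\infty(W\cap\O)}\le C\|u_R\|_{L^{p^*}(W_1\cap\O)}$ and, through Remark \ref{r2.6} and \cite{L88,L91}, the bound $|x|^{\frac{n-p}{p}}u(x)+|x|^{\frac{n}{p}}|\n u(x)|\le C$ for $|x|$ large. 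Next, Lemma \ref{l7} gives $u\in L^{(p-1)a,\infty}(\O)\cap L^{(p-1)b,\infty}(\p\O)$; these weak norms are invariant under the rescaling, as in \eqref{e51} and \eqref{e52}, so Claim \ref{c1} (to pass from $L^{(p-1)a,\infty}$ on a bounded set to $L^t$ with some $t\in(1,(p-1)a)$) combined with Lemma \ref{l6} yields $\|u_R\|_{L^\infty(W\cap\O)}\le C$ uniformly in $R$; elliptic regularity upgrades this to $\|u_R\|_{C^{1,\a}(\O\cap\{4<|x|<5\})}\le C$, which is the sharp upper bound on $u$ and $\n u$.

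For the lower bound I would use Corollary \ref{cor2} to obtain $\sup_{W\cap\O}u_R\le C\inf_{W\cap\O}u_R$ for $R$ large and then argue by contradiction as in the proof of \eqref{e17.2}: if the lower bound failed, Corollary \ref{cor2} would produce $R_k\ra\infty$ with $\sup_{W\cap\O}u_{R_k}\ra 0$, and the uniform $C^{1,\a}$ bound would force, up to a subsequence, $u_{R_k}\ra 0$ in $C^1$ near the annulus; testing \eqref{e124} with a cut-off $\eta$ and then with $\eta^p u_{R_k}$ --- and noting that $d R_k^{-\frac{p}{p-1}}u_{R_k}^{p^*-1}$ is negligible in the first identity (vanishing coefficient times $u_{R_k}\ra 0$) and carries the favorable sign in the energy identity --- one gets $u\equiv 0$ on $\p\O$ and then $\int_\O|\n u|^p\eta_k^p\,dx\ra 0$, hence $u\equiv 0$, contradicting $u>0$. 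For the asymptotic limit I would set $\Gamma_R(u)=\min_{x\in S^{n-1}_+}R^{\frac{n-p}{p-1}}u(Rx)$ (with $\min$ replaced by $\max$ when the Neumann datum is $+u^q$), prove $\lim_{R\ra\infty}\Gamma_R(u)=\a$ for some $\a>0$ via the strict comparison principle of \cite{S70} on annuli with $w(x)=\b|x|^{-\frac{n-p}{p-1}}$ --- legitimate because $\Delta_p u=-d u^{p^*-1}\le 0=\Delta_p w$ in the annulus, so the zeroth order term only helps, and the Neumann datum $\mp u^q$ makes the boundary inequality strict --- and then argue by contradiction once more: a subsequence $u_{R_k}$ converges in $C^1_{loc}(\ov\O)$ to some $u_0$ which, since $d R_k^{-\frac{p}{p-1}}\ra 0$, solves $\Delta_p u_0=0$ in $\O$, $|\n u_0|^{p-2}\p_t u_0=0$ on $\p\O$, and $\Gamma_r(u_0)=\a$ for every $r>0$, so the strict comparison principle forces $u_0(x)=\a|x|^{-\frac{n-p}{p-1}}$, a contradiction.

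The only substantive difference from Proposition \ref{p1} is the bookkeeping that keeps the term $d u^{p^*-1}$ out of the way: in the Moser iteration it is absorbed precisely as in Lemma \ref{l6} and Lemma \ref{l8} thanks to the extra smallness assumption on $\mu=d R^{-\frac{p}{p-1}}$; in the two contradiction arguments it either vanishes in the limit or appears with the sign favorable to the inequality; and in the limiting equation it disappears. The one point I expect to require a little attention is the sign bookkeeping in the strict comparison principle for the $\Gamma_R$ limit: according to whether the boundary datum is $-u^q$ or $+u^q$ one compares $u$ with $w=\b|x|^{-\frac{n-p}{p-1}}$ from below using the minimum of $R^{\frac{n-p}{p-1}}u$ over $S^{n-1}_+$, or from above using the maximum, so that the Hopf boundary inequality contradicts the boundary condition in the right direction. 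This is routine once the correct choice is made, and I do not anticipate any essential obstacle beyond it.
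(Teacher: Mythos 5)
Your proposal coincides with the paper's own proof, which consists precisely of the instruction to rerun the argument of Proposition \ref{p1} with Lemma \ref{l5}, Lemma \ref{l6}, Lemma \ref{l7}, Lemma \ref{l8} and Corollary \ref{cor2} substituted for their counterparts, checking that the rescaled zeroth-order coefficient $dR^{-\frac{p}{p-1}}$ vanishes as $R\to\infty$. The extra sign bookkeeping you flag for the comparison-principle step (choosing $\min$ or $\max$ over $S^{n-1}_+$ according to the sign $\mp u^q$ of the Neumann datum) is exactly the point the paper leaves implicit, so no further commentary is needed.
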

Also, adapting the proof of Proposition \ref{p2}, we can get the following proposition.
\begin{prop}\label{p5}
   Let $u$ be a solution to $(\ref{e124})$. Then $|\n u|^{p-2}\n u\in W^{1,2}_{loc}(\ov\O)$, and for any
$\g\in\rz$ the following asymptotic estimate holds:
\begin{align}
    &\int_{B_R\cap\O}|\n(|\n u|^{p-2}\n u)|^2 u^\g d x\le C(1+R^{-n-\g\frac{n-p}{p-1}})\qquad \forall\, R>1,\non\\
    &\int_{\O\cap(B_{2 R}\bs B_R)}|\n(|\n u|^{p-2}\n u)|^2 u^\g d x\le C R^{-n-\g\frac{n-p}{p-1}}\qquad \forall\, R>1,\non
\end{align}
   where $C$ is a positive constant independent of $R$.
\end{prop}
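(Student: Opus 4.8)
The plan is to run, essentially verbatim, the approximation argument used in the proof of Proposition~\ref{p2}; the only genuinely new feature is the bulk source $d\,u^{p^*-1}$, which contributes a single extra term to the integration-by-parts identity $(\ref{e75})$, and that term turns out to be of strictly lower order. First I would set $a_i(x)=|x|^{p-2}x_i$, $H(x)=|x|^p$, mollify to $a^k$, $H_k$ (so that $a^k\ra a$ uniformly on compacta and $a^k$ satisfies the first two conditions of $(\ref{e5})$ with $s$ replaced by some $s_k\ra 0$), and let $u_k\in D^{1,p}(\O)$ solve
\begin{align*}
\div(a^k(\n u_k))=-d\,u^{p^*-1}\quad\T{in}\ \O,\qquad a^k(\n u_k)\cdot\nu=\mp u^q\quad\T{on}\ \p\O,
\end{align*}
constructed, as in Proposition~\ref{p2}, by minimizing on $\O\cap B_R$ and letting $R\ra\infty$; the bounds $\|u_k^R\|_{L^{p_*}(\p\O)}+\|u_k^R\|_{L^{p^*}(\O)}+\|\n u_k^R\|_{L^p(\O)}\le C$ follow as in $(\ref{e70})$, now also using $u^{p^*-1}\in L^{(p^*)'}(\O)$. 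By Lemma~\ref{l5} and \cite{L88,L91}, $u_k\in C^{1,\a}_{loc}(\ov\O)$ uniformly in $k$; since the approximating equation is uniformly elliptic with $C^\a_{loc}$ data and $\O$ is smooth, $u_k\in C^{2,\a}_{loc}(\ov\O)\cap C^\infty(\O)$ and $a^k(\n u_k)\in W^{1,2}_{loc}(\ov\O)$. Hence $u_k\ra u$ in $C^1_{loc}(\ov\O)$, and — as in Proposition~\ref{p2}, via the Caccioppoli estimate below taken with $\g=0$ — $a^k(\n u_k)\ru a(\n u)$ in $W^{1,2}_{loc}(\ov\O)$.

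Next I would repeat the differentiation step. Fix $R>1$, a $C^2$ domain $U$ with $\O\cap B_{4R}\subset U\subset\O\cap B_{5R}$, the cutoff $\xi_\d$ near $\p U$ as in Proposition~\ref{p2}, and note that for $k$ large $u_k$ is uniformly positive on compacts of $U$. For $\eta\in C_c^\infty(B_{4R})$, $1\le m\le n$ and $\vp=a^k_m(\n u_k)u^\g\eta^2$, testing the approximating equation with $\psi=\p_m\vp\,\xi_\d$ and letting $\d\ra 0$ reproduces identity $(\ref{e75})$ together with the single new right-hand term $\int_U d\,u^{p^*-1}\p_m\vp\,dx$ (the surface integral on $\p\O$ produced by $-d\,u^{p^*-1}\psi$ vanishes because $\xi_\d=0$ on $\p\O$). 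For $m\neq n$ and $m=n$ the boundary contributions are exactly those of $(\ref{e76})$–$(\ref{e77})$ up to the harmless sign of $\mp u^q$, so after summing over $m$ and using $\div a^k(\n u_k)=-d\,u^{p^*-1}$ the new term equals
\begin{align*}
\sum_m\int_U d\,u^{p^*-1}\p_m\big(a^k_m(\n u_k)u^\g\eta^2\big)\,dx&=-d^2\int_U u^{2(p^*-1)+\g}\eta^2\,dx\\
&\quad+\int_U d\,u^{p^*-1}\,a^k(\n u_k)\cdot\n(u^\g\eta^2)\,dx,
\end{align*}
which, dropping the first (nonpositive) term, is bounded by $C\big(\int_U u^{p^*+\g-2}|a^k(\n u_k)|\,|\n u|\,\eta^2\,dx+\int_U u^{p^*+\g-1}|a^k(\n u_k)|\,|\n\eta|\,\eta\,dx\big)$.

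From here Proposition~\ref{p2}'s argument applies line by line: by \cite[Lemma~4.5]{AKM18}, exactly as in $(\ref{e78})$–$(\ref{e79})$, I get a Caccioppoli-type bound for $\int_U|\n a^k(\n u_k)|^2 u^\g\eta^2\,dx$ whose right-hand side is that of $(\ref{e79})$ augmented by the two bulk integrals above (and, if one keeps absolute values, by $d^2\int_U u^{2(p^*-1)+\g}\eta^2\,dx$); passing to the limit $k\ra\infty$ via $u_k\ra u$ in $C^1_{loc}$, $a^k\ra a$ locally uniformly and $a^k(\n u_k)\ru a(\n u)$ in $W^{1,2}_{loc}$, I obtain the analogue of $(\ref{e80})$ for $|\n u|^{p-2}\n u$ with the same extra terms. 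Finally I would insert $u\le C|x|^{-\frac{n-p}{p-1}}$, $|\n u|\le C|x|^{-\frac{n-1}{p-1}}$ from Proposition~\ref{p4}, choosing $\eta\equiv 1$ on $B_R$ with $\operatorname{supp}\eta\subset B_{2R}$, $|\n\eta|\le C_n/R$ for the first estimate, and $\eta\equiv 1$ on $B_{2R}\bs B_R$ with $\operatorname{supp}\eta\subset B_{3R}\bs B_{R/2}$, $|\n\eta|\le C_n/R$ for the second; a short power count shows that on the annulus $B_{2R}\bs B_R$ each new bulk integral is $\le C R^{-n-\g\frac{n-p}{p-1}}$ (the required inequalities all reduce to $p>0$, so these terms are strictly sub-critical relative to the terms already present in $(\ref{e80})$), while over a fixed ball the integrand is integrable because $u$ is uniformly positive and bounded and $|\n u|^{p-2}\n u\in W^{1,2}_{loc}$ by the case $\g=0$, which accounts for the additive constant in the first estimate. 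This gives the two asserted bounds.

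The step I expect to be delicate is the bookkeeping of that single extra term: correctly identifying $\int_U d\,u^{p^*-1}\p_m\vp\,dx$ as the only new contribution in the $\d\ra 0$ passage of $(\ref{e74})$–$(\ref{e75})$, checking that it creates no new surface term on $\p\O$, and verifying through Proposition~\ref{p4} that the resulting volume integrals do not worsen the exponent $-n-\g\frac{n-p}{p-1}$. Everything else is a transcription of the proof of Proposition~\ref{p2}.
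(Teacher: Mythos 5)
Your proposal is correct and follows essentially the same route as the paper: regularize $a$ to $a^k$, solve the approximating Neumann problem, differentiate the weak formulation via the cutoff $\xi_\d$, apply \cite[Lemma 4.5]{AKM18}, pass to the limit in $k$, and conclude with the decay estimates of Proposition \ref{p4}. The only (harmless) divergence is in the bookkeeping of the new bulk term: you keep it as $d\int_U u^{p^*-1}\p_m\vp\,dx$, producing the extra integrals $d^2\int u^{2(p^*-1)+\g}\eta^2$ and $d\int u^{p^*-1}a^k(\n u_k)\cdot\n(u^\g\eta^2)$, whereas the paper integrates by parts once more onto $u^{p^*-1}\xi_\d$ and so instead picks up the boundary term $d\int_{\p\O}u^{p^*+p_*+\g-2}\eta^2\,d\s$ in the case $m=n$ (see $(\ref{e161})$--$(\ref{e164})$); both versions are equivalent and, as your power count confirms, sub-critical.
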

\begin{proof}
The proof of Proposition \ref{p5} is similar to the one of Proposition \ref{p2} and we only point out hereafter the changes required for the solution to $(\ref{e124})$.\\
We argue as Proposition \ref{p2}.
For $k$ fixed, we let $u_k\in D^{1,p}(\O)$ be a solution of 
\begin{equation}\label{e154}
    \left\{
    \begin{array}{lr}
	\div (a^k(\n u_k))=-d u^{p^*-1} & \T{in} \,\,\O,\\
	a^k(\n u_k)\cdot \nu=\pm u^q & \T{on}\,\, \p \O,
	\end{array}
 \right.
\end{equation}
where $\nu$ is the unit outward normal of $\p\O$.\\
Then $(\ref{e68})$ becomes
\begin{align}
    \min_{v\in C_0^\infty(B_R)}\bigg\{\int_{\O\cap B_R}\frac{1}{p}H_k(\n v)\, d x-d\int_{\O\cap B_R}u^{p^*-1} v\, d x\mp\int_{\p\O\cap B_R}u^q v\, d\s\bigg\}.\label{e155}
\end{align}
And $(\ref{e69})$ becomes
\begin{equation}\label{e156}
    \left\{
    \begin{array}{lr}
	\div (a^k(\n u_k^R))=-d u^{p^*-1} & \T{in} \,\,\O\cap B_R,\\
	a^k(\n u_k^R)\cdot \nu=\pm u^q & \T{on}\,\, \p\O\cap B_R,\\
 u_k^R=0 & \T{on}\,\, \O\cap \p B_R.
	\end{array}
 \right.
\end{equation}
Similar to $(\ref{e70})$, we have
\begin{align}
  \|u_k^R\|_{L^{p_*}(\p\O)}\le C,\,\|u_k^R\|_{L^{p^*}(\O)}\le C,\,\|\n u_k^R\|_{L^{p}(\O)}\le C.
    \label{e157}
\end{align}
Then $(\ref{e72})$ becomes
\begin{equation}\label{e158}
    \left\{
    \begin{array}{lr}
	\div (a(\n \ov u))=-d u^{p^*-1} & \T{in} \,\,\O,\\
	a(\n \ov u)\cdot \nu=\pm u^q & \T{on}\,\, \p \O.
	\end{array}
 \right.
\end{equation}
Arguing as Proposition \ref{p2}, we see $u=\ov u$. To simplify the notation, we shall drop the dependency of $a$ on k and we write $a$ instead of $a^k$ respectively.\\
Let $\psi\in C_c^{0,1}(B_{4R})$ and test $(\ref{e154})$ with $\psi$, we have
\begin{align}
   \int_{\O} a(\n u_k)\cdot\n \psi \,d x=
    \pm\int_{\p\O} u^q \psi\,d\s+d\int_{\O} u^{p^*-1} \psi\,d x.
    \label{e159}
\end{align}
Let $\vp\in C^2( U)\cap C^{1}(\ov{ U})$ and $\operatorname{supp}(\vp)\Subset B_{4R}$. Using $\psi=\p_m\vp\,\xi_\d$ in $(\ref{e159})$ with $1\le m\le n$ and integrating by parts, $(\ref{e74})$ becomes
\begin{align}
    0=&\sum_i\bigg(-\int_{ U}\p_{i}\vp\,\p_m(a_i(\n u_k)\xi_\d) d x+\int_{ U}a_i(\n u_k)\p_{m}\vp\,\p_i \xi_\d d x\bigg)\non\\
    &+d\int_U \vp \,\p_m(u^{p^*-1}\xi_\d) d x
    \non\\
    \RA &\sum_i\int_{ U}\p_{i}\vp\p_m a_i(\n u_k)\xi_\d d x=\sum_i\bigg(-\int_{ U}\p_{i}\vp\,a_i(\n u_k)\p_m\xi_\d d x\non\\
    &+\int_{ U}a_i(\n u_k)\p_{m}\vp\,\p_i \xi_\d d x\bigg)
    +d\int_U \vp \,\p_m(u^{p^*-1}\xi_\d) d x
    .\label{e160}
\end{align}
Passing to the limit as $\d\ra 0$ into $(\ref{e160})$, we get
\begin{align}
    &\sum_i\int_{ U}\p_{i}\vp\,\p_m a_i(\n u_k) d x=\sum_i\bigg(\int_{ \p \O}\p_{i}\vp\,a_i(\n u_k)\nu^m d \s\non\\
    &-\int_{\p\O}a_i(\n u_k)\p_{m}\vp\,\nu^i d \s\bigg)+d\int_U \vp \,\p_m(u^{p^*-1}) d x-d\int_{\p\O} \vp \,u^{p^*-1}\nu^m d \s.\label{e161}
\end{align}
Let $\eta\in C_c^\infty(B_{4R})$. We choose $\vp=a_m(\n u_k) u^\g \eta^2$ and divide $m$ into two cases.\\
\textbf{Case 1:} $m\neq n$\\
Then, it follows from $(\ref{e154})$ that $(\ref{e161})$ becomes
\begin{align}
    &\sum_i\int_{U}\p_m a_i(\n u_k)\,\p_{i}(a_m(\n u_k) u^\g \eta^2)\, d x\non\\
    =&\mp\sum_i\int_{\p\O}u^q\p_{m}\vp\, d \s
    +d\int_U \vp \,\p_m(u^{p^*-1}) d x\non\\
=&\pm\sum_i\int_{\p\O}\p_m(u^q)a_m(\n u_k) u^\g \eta^2\, d \s+d\int_U \p_m(u^{p^*-1})a_m(\n u_k) u^\g \eta^2\, d x
    \label{e162}
\end{align}

\noindent\textbf{Case 2:} $m=n$\\
Note that $\vp=\mp u^{q+\g}\eta^2$ on $\p\O$ in this case.
Then, $(\ref{e161})$ becomes
\begin{align}
    &\sum_i\int_{ U}\p_{i}\vp\,\p_m a_i(\n u_k) d x\non\\
    =&\sum_{i=1}^n\bigg(\int_{ \p \O}\p_{i}\vp\,a_i(\n u_k)\nu^n d \s\bigg)-\int_{\p\O}a_n(\n u_k)\p_{n}\vp\,\nu^n d \s\non\\
    &+d\int_U \vp \,\p_n(u^{p^*-1}) d x+d\int_{\p\O} \vp \,u^{p^*-1} d \s\non\\
    =&\pm\sum_{i=1}^{n-1}\int_{ \p \O}\p_{i}(u^{q+\g}\eta^2)\,a_i(\n u_k) d \s+d\int_U \vp \,\p_n(u^{p^*-1}) d x\non\\
&+d\int_{\p\O}\,u^{p^*+p_*+\g-2}\eta^2 d \s
    \label{e163}
\end{align}
Combining $(\ref{e162})$ and $(\ref{e163})$, we obtain
\begin{align}
&\sum_{i,m=1}^n\bigg|\int_{U}\p_m a_i(\n u_k)\,\p_{i}(a_m(\n u_k) u^\g \eta^2)\, d x\bigg|\non\\
&\le C\bigg(\int_{\p\O}|a(\n u_k)|\,|\n u| u^{\g+q-1} \eta^2\, d \s+\int_{\p\O}|a(\n u_k)|\,|\n \eta| u^{\g+q} \eta\, d \s\non\\
&+\int_{\p\O}\,u^{p^*+p_*+\g-2}\eta^2 d \s
+\int_{\O}|a(\n u_k)|\,|\n u| u^{\g+p^*-2} \eta^2\, d x
   \bigg) \label{e164}
\end{align}
where $C$ only depends on $n,p,\g,d$.\\
Finally, arguing as Proposition \ref{p2}, we finish the proof.

\end{proof}

Adapting the proof of Proposition \ref{p3}, we can get the following proposition.
\begin{prop}\label{p6}
    Let $u$ be a solution to $(\ref{e109})$. Then we have
     $$\p_j (|\n u|^{p-2}u_i)=q(\frac{|\n u|^{p-2}u_i u_j}{u}-\frac{|\n u|^p}{n u}\d_{i j})-\frac{d}{n}u^{p^*-1}\d_{ij}\quad\T{in}\,\,\,L^2_{loc}(\ov \O).$$
     In particular, we have $|\n u|^{p-2}\n u\in C^{1,\a}_{loc}(\ov\O)$.
\end{prop}
\begin{proof}
Arguing as the proof of Proposition \ref{p3}, we obtain $u_k\in D^{1,p}(\O)$ being a solution of 
\begin{equation}\label{e165}
    \left\{
    \begin{array}{lr}
	\div (a^k(\n u_k))=-d u^{p^*-1} & \T{in} \,\,\O,\\
	a^k(\n u_k)\cdot \nu=\pm u^q & \T{on}\,\, \p \O.
	\end{array}
 \right.
\end{equation}

Also, we have $u_k\in C^{3,\a}(\O)\cap C^{2,\a}_{loc}(\ov\O)$ and $u_k\in C^{1,\a}_{loc}(\ov \O)$, uniformly in $k$. Hence, $(u_k)_{k\in\nz}$
converges up to a subsequence in
$C^1_{loc}(\ov \O)$ to $u$. Besides, the proof of Proposition \ref{p5} shows that as $k\ra\infty$, up to a subsequence, we have $a^k(\n u_k)\ru a(\n u)$ in 
$W^{1,2}_{loc}(\ov\O)$. In the following, we only consider the subsequence of $(u_k)_{k\in\nz}$ satisfying that $u_k\ra u$ in $C^1_{loc}(\ov \O)$ and $a^k(\n u_k)\ru a(\n u)$ in 
$W^{1,2}_{loc}(\ov\O)$.\\
Set $a,X,L,a^k,X^k,L^k,H_k,D^k$ as before and denote
\begin{align}
   E_{ij}=X_{ij}+\frac{d}{n}u^{p^*-1}\d_{ij},\,E^k_{ij}=X^k_{ij}+\frac{d}{n}u^{p^*-1}\d_{ij}.
    \label{e166}
\end{align}
Arguing as Proposition \ref{p3}, we obtain
\begin{equation}\label{e167}
    \D\sum_{i,j}(E^k_{ij}-q L^k_{ij})(E^k_{ji}-q L^k_{ji})\ge C_p\sum_{i,j}|E^k_{ij}-q L^k_{ij}|^2 \ge 0.
\end{equation}
To simplify the notation, we shall also drop the dependency on k and we write $a$, $\tl u$, $X$, $X_i$, $X_{ij}$, $E_{ij}$, $L_{ij}$, $D_{ij}$ instead of $a^k,\,u_k,\,X^k,\,X^k_i,\,X^k_{ij},\,E^k_{ij},\,L^k_{ij}$, $D^k_{ij}$ respectively.\\
First, using $(\ref{e165})$, that is $\sum_i X_{ii}=-d u^{p^*-1}$, we get the following Serrin-Zou type differential equality
\begin{align}
    &\sum_i\p_i\bigg(\sum_j u^{1-q} X_{ij}X_j-\frac{(n-1)q}{n} u^{-q}|X|^{\frac{p}{p-1}}X_i+\frac{d}{n}u^{p^*-q}X_i\bigg)\non\\
    &=\sum_{i,j}\bigg(u^{1-q} X_{iij}X_j+u^{1-q} X_{ij}X_{ji}
    +(1-q)u^{-q} X_{ij}X_j u_i\non\\
    &-\frac{(n-1) p}{n-p}u^{-q}|X|^{\frac{1}{p-1}}\frac{X_j X_{ji}X_i}{|X|}\bigg)+\frac{(n-1)q^2}{n} \sum_i u^{-q-1}|X|^{\frac{p}{p-1}}X_i u_i\non\\
    &+\sum_i\bigg(
    \frac{d}{n}u^{p^*-q}X_{ii}+\frac{d(p^*-q)}{n}u^{p^*-p_*}X_{i}u_i-\frac{(n-1)q}{n}u^{-q}|X|^{\frac{p}{p-1}}X_{ii}
    \bigg)\non\\
    &=\sum_{i,j}\bigg(u^{1-q} X_{ij}X_{ji}
    -2 q u^{-q} X_{ij}X_j \tl{u}_i\bigg)+\frac{(n-1)q^2}{n}\sum_i u^{-q-1}|X|^{\frac{p}{p-1}}X_i u_i\non\\
    &+\sum_i\bigg(\frac{d(n-1)q}{n}u^{p^*-p_*}|X|^{\frac{p}{p-1}}
    -\frac{d}{n}u^{2 p^*-p_*}-\frac{d(n+1)(p-1)}{n-p}u^{p^*-p_*}X_{i}u_i
    \bigg)\non\\
    &+(1-q)\sum_{i,j} u^{-q} X_{ij}X_j (u_i-\tl{u}_i)+(q+1)P^k\non\\
    &=\sum_{i,j}u^{1-q} (X_{ij}-q L_{ij})(X_{ji}-q L_{ji})-\frac{d}{n}u^{2 p^*-p_*}+E^k+M^k\non\\
    &=\sum_{i,j}u^{1-q} (E_{ij}-q L_{ij})(E_{ji}-q L_{ji})+E^k+M^k,\label{e168}
\end{align}
where 
\begin{align}
    M^k=&-\frac{2 q}{n}\sum_i u^{-q}(\frac{1}{k^2}+|\n\tl u|^2)^\frac{p}{2}X_{ii}
    -2 q\sum_{i,j} u^{-q}X_{ij}D_{ij}\non\\
    &+\frac{d(n-1) q}{n}u^{p^*-p_*}|X|^{\frac{p}{p-1}}
    -\frac{d(n+1)(p-1)}{n-p}u^{p^*-p_*}X_{i}u_i\non\\
    =&-2 q\sum_{i,j} u^{-q}X_{ij}D_{ij}
    +\frac{d(n-1) q}{n}u^{p^*-p_*}\big(|X|^{\frac{p}{p-1}}-(\frac{1}{k^2}+|\n\tl u|^2)^\frac{p}{2}\big)\non\\
    &+\frac{d(n+1)(p-1)}{n-p}u^{p^*-p_*}(\frac{1}{k^2}+|\n\tl u|^2)^\frac{p-2}{2}\frac{1}{k^2},
    \label{e169}
\end{align}
and $E^k$ is as (\ref{e91}).

    Note that we have shown in Proposition \ref{p3} that $E^k\ra 0$ in $L^2_{loc}(\ov\O)$ and $D^k_{ij}\ra 0$ in $C^0_{loc}(\ov\O)$.\\
Besides, since $X^k\in W^{1,2}_{loc}(\ov \O)$, uniformly in k, we obtain that $u^{-q}X^k_{ij}D^k_{ij}\ra 0$ in $L^2_{loc}(\ov\O)$, and then $M^k\ra 0$ in $L^2_{loc}(\ov\O)$.

Second, using $(\ref{e165})$, we get the following Pohozaev-type differential equality
\begin{align}
    &\sum_i\p_n(X_i \tl{u}_i)-p\sum_i\p_i(X_i \tl{u}_n)-d\frac{p}{p^*}\p_n(u^{p^*})\non\\
    =&\sum_i X_{i n}\tl{u}_i-(p-1)\sum_i X_i \tl{u}_{n i}-p\sum_iX_{ii} \tl{u}_n-d p u^{p^*-1}u_n\non\\
    =&\sum_i X_{i n}\tl{u}_i-(p-1)\sum_i X_i \tl{u}_{n i}+d p u^{p^*-1}(\tl{u}_n- u_n)\non\\
    =&I^k+d p u^{p^*-1}(\tl{u}_n- u_n):=N^k\label{e170}
\end{align}
where
$$I^k=-\bigg(1+\frac{(p-2)|\n\tl{u}|^2}{\frac{1}{k^2}+|\n \tl{u}|^2}\bigg)^{-1}\frac{(p-2)\frac{1}{k^2}}{\frac{1}{k^2}+|\n \tl{u}|^2}\sum_i X_{i n}\tl{u}_i.$$
Recall that $I^k\ra 0$ in $L^2_{loc}(\ov\O)$ and $u_k\ra u$ in $C^1_{loc}(\ov \O)$, so we have $N^k\ra 0$ in $L^2_{loc}(\ov\O)$.\\
Then, it follows from $(\ref{e168})$ and $(\ref{e170})$ that
\begin{align}
    &\sum_i\p_i\bigg(\sum_j u^{1-q} X_{ij}X_j-\frac{(n-1)q}{n} u^{-q}|X|^{\frac{p}{p-1}}X_i+\frac{d}{n}u^{p^*-q}X_i\bigg)\non\\
    &\pm\frac{n-1}{n-p}\bigg[\sum_i\p_n(X_i \tl{u}_i)-p\sum_i\p_i(X_i \tl{u}_n)-d\frac{p}{p^*}\p_n(u^{p^*})\bigg]\non\\
    &=\sum_{i,j}u^{1-q} (E_{ij}-q L_{ij})(E_{ji}-q L_{ji})+E^k+M^k\pm\frac{n-1}{n-p}N^k
    \label{e171}
\end{align}
Let $\eta\in C_c^\infty(B_R)$. We  multiply $(\ref{e171})$ by $\eta$ and integrate over $\O$. Then, it follows from the divergence theorem that
\begin{align}
    &\int_\O \bigg(
    \sum_{i,j}u^{1-q} (E_{ij}-q L_{ij})(E_{ji}-q L_{ji})+E^k+M^k\pm\frac{n-1}{n-p}N^k
    \bigg)\eta\, d x\non\\
    &=-\int_\O\sum_i\big(\sum_j u^{1-q} X_{ij}X_j-\frac{(n-1)q}{n} u^{-q}|X|^{\frac{p}{p-1}}X_i+\frac{d}{n}u^{p^*-q}X_i
    \big)\eta_i d x\non\\
    &\mp\frac{n-1}{n-p}\int_\O\big(\sum_i X_i \tl{u}_i\eta_n-p\sum_i X_i \tl{u}_n\eta_i-d\frac{p}{p^*}u^{p^*}\eta_n
    \big) d x\non\\
    &-\int_{\p\O}\big(\sum_j u^{1-q} X_{nj}X_j-\frac{(n-1)q}{n} u^{-q}|X|^{\frac{p}{p-1}}X_n+\frac{d}{n}u^{p^*-q}X_n
    \big)\eta d\s\non\\
    &\mp\frac{n-1}{n-p}\int_{\p\O}\big(\sum_i X_i \tl{u}_i-p X_n \tl{u}_n-d\frac{p}{p^*}u^{p^*}
    \big)\eta d\s\label{e172}
\end{align}
Since $X^k_{n}=\mp u^q$ on $\p\O$ and $\sum_i X^k_{ii}=-d u^{p^*-1}$, we have
\begin{align}
    &\int_{\p\O}\big(\sum_j u^{1-q} X_{nj}X_j-\frac{(n-1)q}{n} u^{-q}|X|^{\frac{p}{p-1}}X_n+\frac{d}{n}u^{p^*-q}X_n
    \big)\eta d\s\non\\
=&\int_{\p\O}\pm\big(\frac{(n-1)q}{n} |X|^{\frac{p}{p-1}}-q\sum_{j=1}^{n-1} u_j X_j-u^{1-q}\sum_{j=1}^{n-1}X_{jj}X_n+\frac{(n-1)d}{n}u^{p^*}
    \big)\eta d\s\non\\
    =&\int_{\p\O}\pm\big(-\frac{(n-1)p}{n-p}\sum_{j=1}^{n-1} X_j\tl{u}_j +\frac{(n-1)q}{n} \sum_{j=1}^{n} X_j\tl{u}_j+\frac{(n-1)d}{n}u^{p^*}+J^k\big)\eta d\s\non\\
    \mp&\int_{\p\O}\sum_{j=1}^{n-1}u X_{j}\eta_j d\s\non\\
    =&\mp\frac{n-1}{n-p}\int_{\p\O}\big(\sum_i X_i \tl{u}_i-p X_n \tl{u}_n-d\frac{p}{p^*}u^{p^*}
    \big)\eta d\s\pm\int_{\p\O}J^k\eta\mp\sum_{j=1}^{n-1}u X_{j}\eta_j d\s\label{e173}
\end{align}
Finally, arguing as Proposition \ref{p3}, we finish the proof.

\end{proof}
Also, we consider the auxiliary function
$
    v=\frac{n-p}{p}u^{-\frac{p}{n-p}},\non
$
where $u$ is a solution of $(\ref{e109})$. A straightforward computation shows that $v > 0$ satisfies the
following equation
\begin{equation}
\left\{
    \begin{array}{lr}
	\Delta_p v=\frac{n(p-1)}{p}\frac{|\n v|^p}{v}+d\frac{n-p}{p}\frac{1}{v} & \T{in}\quad \O\\
	|\n v|^{p-2}\frac{\partial v}{\partial t}=\pm 1 & \T{on}\quad \p\O
	\end{array}
 \right.     
\end{equation}
In particular, $v\in C_{loc}^{1,\a}(\ov\O)$ for some $\a\in(0,1)$.

Using Proposition \ref{p6} instead of Proposition \ref{p3} and arguing as Theorem \ref{t1}, we obtain Theorem \ref{t2}.


\begin{thebibliography}{00}
   
   \bibitem{AKM18} B. Avelin, T. Kuusi, G. Mingione. {\em Nonlinear Calder'on-Zygmund Theory in the Limiting
Case}, Arch. Rational Mech. Anal. 227(2018), 663–714.

\bibitem{B93} W. Beckner, {\em  Sharp Sobolev inequalities on the sphere and the Moser-Trudinger
inequality},
Ann. of Math., 138(1993), 213–242.

   \bibitem{BL85}  H. Brezis and E. H. Lieb, {\em Sobolev inequalities with remainder terms}, J. Funct. Anal.,
62(1985), 73–86.

   \bibitem{CFR20} G. Ciraolo, A. Figalli, A. Roncoroni, {\em Symmetry results for critical anisotropic p-Laplacian equations in convex cones}, Geometric and Functional Analysis,
30(2020), 770-803.

\bibitem{CM18} A. Cianchi, V. Maz'ya, {\em Second-Order Two-Sided Estimates in Nonlinear Elliptic Problems}, Arch. Rational Mech. Anal. 229(2018), 569-599.

\bibitem{CS07} L. Caffarelli, L. Silvestre, {\em An extension problem related to the fractional Laplacian}, Comm. Partial. Diff. Equ., 32(2007),
1245–1260.
   \bibitem{E88}J. F. Escobar, {\em Sharp constant in a Sobolev trace inequality}, Indiana University Mathematics Journal, 37(1988), 687-698.

    \bibitem{E90} J. F. Escobar, {\em Uniqueness theorems on conformal deformation of metric, Sobolev inequalities, and an eigenvalue estimate}, Comm. Pure Appl. Math., 43 (1990), 857-883.
    
    
\bibitem{FF97} I. Fonseca, N. Fusco. {\em Regularity results for anisotropic image segmentation models}, Ann.
Scuola Norm. Sup. Pisa Cl. Sci. (IV), 24(1997), 463-499.

\bibitem{GL23} P. X. Gu, H. Z. Li, {\em A proof of Guo-Wang's conjecture on the uniqueness of positive harmonic functions in the unit ball}, eprint arxiv:2306.15565, 2023.


\bibitem{GT77} D. Gilbarg, N. S. Trudinger, {\em Elliptic partial differential equations of second order}, Springer Verlag, Berlin-New York, 1977.

\bibitem{GW20} Q. Q. Guo, X. D. Wang, {\em Uniqueness results for positive harmonic functions on $\ov{\mathbb{B}^n}$ satisfying a nonlinear boundary condition}, Calc. Var. Partial Differential Equations, 59(2020), Paper No. 146.


\bibitem{HL00} Q. Han, F. H. Lin, 
{\em Elliptic Partial Differential Equations}, American Mathematical Society, 2nd ed, 2000.

    \bibitem{HY94}B. Hu, H. M. Yin, {\em The profile near blowup time for solution of the heat equation with a 
nonlinear boundary condition}, Trans. Amer. Math. Soc., 346(1994), 117-135.

\bibitem{L88} G. M. Lieberman, 
{\em  Boundary regularity for solutions of degenerate elliptic equations}, Nonlinear Anal., 12(1988), 1203-1219.

\bibitem{L91} G. M. Lieberman, 
{\em The natural generalizationj of the natural conditions of
ladyzhenskaya and uralľtseva for elliptic equations}, Communications in Partial Differential Equations, 16(1991), 311-361.


\bibitem{L19} A. Loiudice, {\em Optimal decay of p-Sobolev extremals on Carnot groups}, J. Math. Anal. Appl., 470(2019),
619–631.

\bibitem{YYL03}Y. Y. Li, L. Zhang, {\em Liouville-type theorems and hanack-type 
inequalities for semilinear elliptic equations}, J. Anal. Math., 90(2003), 27-87.


\bibitem{LZ95} Y. Y. Li, M. Zhu, {\em  Uniqueness theorems through the method of moving spheres}, Duke Math. J., 
80(1995), 383-417.

\bibitem{MN17} F. Maggi, R. Neumayer, {\em A bridge between Sobolev and Escobar inequalities and
beyond}, J. Funct. Anal., 273(2017), 2070–2106.

\bibitem{MNT23} F. Maggi, R. Neumayer, I. Tomasetti, {\em Rigidity theorems for best Sobolev inequalities}, Advances in Mathematics, 434(2023), 1-43.

\bibitem{MW19} G. Marino, P. Winkert, {\em Moser iteration applied to elliptic equations with critical growth on the boundary}, Nonlinear Anal., 180(2019), 154-169.

\bibitem{N06} B. Nazaret, {\em Best constant in Sobolev trace inequalities on the
half-space}, Nonlinear Analysis, 65(2006), 1977–1985.

\bibitem{O96}B. Ou, {\em Positive harmonic functions on the upper half space satisfying a nonlinear boundary 
condition}, Differential Integral Equations, 9(1996), 1157-1164.

\bibitem{QX15} G. H. Qiu, C. Xia, {\em A generalization of Reilly's formula and its applications to a new Heintze-Karcher type inequality}, Int. Math. Res. Not., 2015(2015), 7608-7619.

\bibitem{S16} B. Sciunzi, {\em Classification of positive $D^{1,p}(\rz^n)
$-solutions to the critical p-Laplace equation in $\rz^n$}, Advances in Mathematics, (291)2016, 12–23.

\bibitem{S70} J. Serrin, {\em On the strong maximum principle for quasilinear second order
differential inequalities}, J. Functional Analysis, 5(1970), 184-193.

\bibitem{SZ02} J. Serrin, H. H. Zou, {\em  Cauchy-Liouville and universal boundedness theorems for quasilinear elliptic equations and inequalities}, Acta Math., 189(2002), 79–142.

\bibitem{Veto16} J. V\'{e}tois., {\em A priori estimates and application to the symmetry of solutions for critical
p-Laplace equations}, J. Differential Equations, 260(2016), 149-161.

\bibitem{W10} P. Winkert, {\em $L^\infty$-Estimates for nonlinear elliptic Neumann boundary value problems}, Nonlinear Differ. Equ. Appl., 17(2010), 289-302.

\end{thebibliography}
\end{document}